\titleformat{\section}{\normalfont\large\bfseries}{\thesection}{1em}{}
\titleformat{\subsection}{\normalfont\bfseries}{\thesubsection}{1em}{}
\definecolor{LinkColor}{rgb}{0,0,1}
\definecolor{LinkColor2}{rgb}{0,0.5,0}
\definecolor{lbcolor}{rgb}{0.85,0.85,0.85}
\definecolor{FrameColor}{rgb}{0.85,0.85,0.85}
\definecolor{rosso}{rgb}{0.8,0,0}
\definecolor{lightgray}{rgb}{0.5,0.5,0.5}
\definecolor{violet}{rgb}{0.65,0,0.65}
\definecolor{darkgreen}{rgb}{0,0.5,0}
\numberwithin{equation}{section}
\newtheorem{theorem}{Theorem}[section]
\newtheorem{lemma}[theorem]{Lemma}
\newtheorem{proposition}[theorem]{Proposition}
\newtheorem{corollary}[theorem]{Corollary}
\newtheorem{definition}[theorem]{Definition}
\theoremstyle{definition}
\newtheorem{remark}[theorem]{Remark}
\renewenvironment{proof}[1][\proofname]{%
	\par\pushQED{\qed}\normalfont%
	\topsep6\p@\@plus6\p@\relax
	\trivlist\item[\hskip\labelsep\bfseries#1\@addpunct{.}]%
	\ignorespaces
}{%
	\popQED\endtrivlist\@endpefalse
}
\renewcommand\paragraph{\@startsection{paragraph}{4}{\z@}%
	{1ex \@plus1ex \@minus.2ex}%
	{-1em}%
	{\normalfont\normalsize\bfseries}}
\renewcommand\subparagraph{\@startsection{paragraph}{4}{\z@}%
	{1ex \@plus1ex \@minus.2ex}%
	{-1em}%
	{\normalfont\normalsize\itshape}}
\newcommand{\abs}[1]{\left| #1 \right|}
\newcommand{\bigabs}[1]{\big| #1 \big|}
\newcommand{\norm}[1]{\| #1 \|}
\newcommand{\bignorm}[1]{\big\| #1 \big\|}
\newcommand{\ang}[2]{ \langle #1 , #2  \rangle}
\newcommand{\bigang}[2]{ \big< #1 , #2  \big>}
\newcommand{\scp}[2]{ \left( #1 , #2  \right)}
\newcommand{\mean}[1]{\langle #1 \rangle}
\newcommand{\R}{\mathbb R}
\newcommand{\N}{\mathbb N}
\newcommand{\n}{\mathbf{n}}
\newcommand{\intO}{\int_\Omega}
\newcommand{\ep}{\varepsilon}
\newcommand{\supp}{\textnormal{supp\,}}
\newcommand{\dist}{\textnormal{dist\,}}
\newcommand{\dtau}{\;\mathrm d\tau}
\newcommand{\dx}{\;\mathrm d\mathbf{x}}
\newcommand{\dy}{\;\mathrm d\mathbf{y}}
\newcommand{\dt}{\;\mathrm dt}
\newcommand{\ds}{\;\mathrm ds}
\newcommand{\dtx}{\;\mathrm d(t,\mathbf{x})}
\newcommand{\dsx}{\;\mathrm d(s,\mathbf{x})}
\newcommand{\dds}{\frac{\mathrm d}{\mathrm ds}}
\newcommand{\del}{\partial}
\newcommand{\delt}{\partial_{t}}
\newcommand{\delth}{\partial_{t}^{h}}
\newcommand{\Grad}{\nabla}
\newcommand{\PN}{(-\Delta_N)^{-1}}
\newcommand{\emb}{\hookrightarrow}
\newcommand{\ssubset}{\subset\joinrel\subset}
\newcommand{\ov}{\overline}
\newcommand{\suchthat}{\;\ifnum\currentgrouptype=16 \middle\fi|\;}
\newcommand{\p}{\mathbf{p}}
\newcommand{\q}{\mathbf{q}}
\newcommand{\x}{\mathbf{x}}
\newcommand{\y}{\mathbf{y}}
\newcommand{\e}{\mathbf{e}}
\newcommand{\zero}{\mathbf{0}}
\newcommand{\Om}{\Omega}
\newcommand{\OT}{\Omega_T}
\newcommand{\Ot}{\Omega_{t}}
\newcommand{\revised}[1]{#1}
\newcommand{\Bigabs}[1]{\Big| #1 \Big|}
\newcommand{\biggabs}[1]{\bigg| #1 \bigg|}
\newcommand{\dkh}{\nabla_k^h}
\newcommand{\dkmh}{\nabla_k^{-h}}
\newcommand{\dnh}{\nabla_n^h}
\newcommand{\oton}{\{1,\dots,n\}}
\newcommand{\otonmo}{\{1,\dots,n-1\}}
\begin{document}

%
%

\title{\bfseries The anisotropic Cahn--Hilliard equation:\\ regularity theory and\\ strict separation properties
\\[-1.5ex]$\;$}

\author{Harald Garcke \footnotemark[1] 
		\and Patrik Knopf \footnotemark[1]
        \and Julia Wittmann \footnotemark[1]}

\date{ }

\maketitle

\begin{center}
    \bfseries
    \textit{This paper is dedicated to the 65th birthday of Professor Pierluigi Colli}
\end{center}

\renewcommand{\thefootnote}{\fnsymbol{footnote}}

\footnotetext[1]{
    Faculty for Mathematics, 
    University of Regensburg, 
    93053 Regensburg, 
    Germany \newline
	\tt(%
        \href{mailto:harald.garcke@ur.de}{harald.garcke@ur.de},
        \href{mailto:patrik.knopf@ur.de}{patrik.knopf@ur.de},
        \href{mailto:julia4.wittmann@ur.de}{julia4.wittmann@ur.de}%
        ).
}

\begin{center}
	\scriptsize
	{
		\textit{This is a preprint version of the paper. Please cite as:} \\  
		H.~Garcke, P.~Knopf, J.~Wittmann, 
        \textit{Discrete Contin.~Dyn.~Syst.~Ser.~S}
        \textbf{16}(12): 3622-3660 (2023)\\
		\url{https://doi.org/10.3934/dcdss.2023146}
    }
\end{center}

\smallskip

%
%

\begin{small}
\begin{center}
    \textbf{Abstract}
\end{center}
The Cahn--Hilliard equation with anisotropic energy contributions frequently appears in many physical systems. Systematic analytical results for the case with the relevant logarithmic free energy have been missing so far. We close this gap and show existence, uniqueness, regularity, and separation properties of weak solutions to the anisotropic Cahn--Hilliard equation with logarithmic free energy. Since firstly, the equation becomes highly non-linear, and secondly, the relevant anisotropies are non-smooth, the analysis becomes quite involved. In particular, new regularity results for quasilinear elliptic equations of second order need to be shown.
\\[1ex]
\textbf{Keywords:} Cahn--Hilliard equation, anisotropy, weak solutions, regularity, separation property. 
\\[1ex]	
\textbf{Mathematics Subject Classification:} 35K55, 35K61, 74E10, 35Q99.
\end{small}


\setlength\parindent{0ex}
\setlength\parskip{1ex}
\allowdisplaybreaks


\section{Introduction} 
\label{SECT:INTRO}
The classical Cahn--Hilliard equation
\begin{subequations}
\label{CH} 
\begin{alignat}{2}
    \label{CH:1}
    \delt \varphi &= \Grad \cdot \big(M(\varphi) \Grad\mu \big)
    &&\quad\text{in $\OT$},\\
    \label{CH:2}
    \mu &= -\ep \Delta \varphi + \ep^{-1} F'(\varphi)
    &&\quad\text{in $\OT$},
\end{alignat}
\end{subequations} 
where $\Omega\subset \R^d$ \revised{with $d\in \N$} 
is a bounded domain, $T$ is a positive final time and
$\Omega_T=(0,T)\times \Omega$, can be considered as an $H^{-1}$-gradient flow
of the
Ginzburg--Landau energy
\begin{align*}
  E_\mathrm{iso}(\varphi) =
  \int_\Omega \frac{\varepsilon}{2}|\nabla\varphi|^2 +
    \ep^{-1} F(\varphi) \dx,
\end{align*}
see \cite{Fife93, GarckeDMV}.
Here, $\varepsilon>0$ is a small parameter related to the width of a diffuse
interface separating two different phases, and $F:\R\to\R$ is a free energy density. The function $F$
has two global minima that correspond to these two phases, and the 
functional $E_\mathrm{iso}$ describes the energy associated with the diffuse interface. This interfacial energy, which is given by the
Ginzburg--Landau functional, is isotropic. This means that the
energy does not depend on the local orientation of the interface. In many
applications, in particular when crystalline materials are to be described, anisotropic
energies are more realistic, see, e.g., \cite{Taylor94linking}. In anisotropic materials, the energy
density actually depends on the local orientation of the interface. In fact, in
a sharp-interface description, the energy is given by
\begin{align}\label{SIen}
  \int_\Sigma \gamma(\boldsymbol{\nu})\; \mathrm d\mathcal{H}^{d-1},
\end{align}
where $\Sigma$ is a hypersurface representing the interface,
$\boldsymbol{\nu}$ stands for the unit normal on $\Sigma$,
$\mathrm d\mathcal{H}^{d-1}$ refers to  integration with respect to the
$(d-1)$-dimensional surface measure, and
\begin{align*}
  \gamma: \R^d\to (0,\infty)
\end{align*}
is an anisotropic density function. The energy density in \eqref{SIen} now depends via
the normal $\boldsymbol{\nu}$ on the local orientation of the
interface. In this context, the function $\gamma$ is assumed to be convex and to fulfill
\begin{subequations}
\begin{alignat}{2}
    \label{gamma:1}
    \gamma(\lambda \p)&= \lambda\gamma(\p)
    &&\quad\text{for all $\p\in\R^d\setminus\{\mathbf{0}\}$ and all $\lambda>0$},\\
    \label{gamma:2}
    \gamma(\p)&>0 
    &&\quad\text{for all $\p\in\R^d\setminus\{\mathbf{0}\}$}.
\end{alignat}
\end{subequations} 
The property \eqref{gamma:1} says that $\gamma$ is positively
one-homogeneous and implies that an anisotropic $\gamma$ is
necessarily not differentiable in $\p=\mathbf{0}$.
On the diffuse-interface level, the energy is then given by
\begin{align}\label{DiffEn}
  E:H^1(\Omega)\to\R, \quad E(\varphi)= \int_\Omega \varepsilon
  A(\nabla\varphi) + \varepsilon^{-1} F(\varphi) \dx,
\end{align}
where $A$ is given by
\begin{align*}
    A(\p) = \tfrac{1}{2} \gamma^2(\p)
    \quad\text{for all $\p\in\R^d$}.
\end{align*}
It can be shown that the diffuse-interface energy $E$ introduced in \eqref{DiffEn} converges to the sharp-interface energy \eqref{SIen} up to a multiplicative constant in the sense of $\Gamma$-limits, see, e.g., \cite{Garcke2023,Barroso1994,Bellettini2005,Bouchitte1990,Owen1991}.
For the forthcoming mathematical analysis, it is important that $A$ is
positively two-homogeneous. In particular, this already entails that an anisotropic $A$ necessarily
cannot be twice differentiable. This will make the regularity theory,
which will be discussed later, quite involved. 

As discussed for example in \cite{GKNZ}, one can also consider the
$H^{-1}$-gradient flow of the anisotropic energy $E$ introduced in \eqref{DiffEn}.
In fact, a weighted $H^{-1}$-gradient flow approach was used in \cite{GKNZ} to study the following general version of the anisotropic Cahn--Hilliard equation:
\begin{subequations}
\label{AICH} 
\begin{alignat}{2}
    \label{AICH:1}
    \delt \varphi &= \Grad \cdot \big(M(\Grad\varphi,\varphi) \Grad\mu \big)
    &&\quad\text{in $\OT$},\\
    \label{AICH:2}
    \mu &= -\ep \Grad\cdot A'(\Grad\varphi) + \ep^{-1} F'(\varphi)
    &&\quad\text{in $\OT$},\\
    \label{AICH:3}
    \ep A'(\Grad\varphi)\cdot \n &= 0
    &&\quad\text{on $\Gamma_T$},\\
    \label{AICH:4}
    \Grad\mu\cdot \n &= 0
    &&\quad\text{on $\Gamma_T$},\\
    \label{AICH:5}
    \varphi\vert_{t=0} &= \varphi_0
    &&\quad\text{in $\Omega$},
\end{alignat}
\end{subequations} 
where $\Gamma_T=(0,T)\times \partial \Omega$, and $\n$ is the outer unit normal on $\partial \Omega$. 
From now on, we assume that $\Omega\subset \R^d$ with $d\in \{2,3\}$ is a bounded domain with Lipschitz boundary. 
Moreover, the function $M:\R^d\times\R\to\R^+$ representing the mobility of the mixture is also allowed to be anisotropic.  

Especially in applications related to materials science, the physically relevant choice of $F$ in the anisotropic energy $E$ is the \textit{logarithmic potential}
\begin{align}
    \label{DEF:F:LOG}
    F(s) = \frac{\theta}{2} \big[(1+s)\,\ln(1+s) +(1-s)\,\ln(1-s)\big]+\frac{\theta_c}{2}(1-s^2),
\end{align}
for all $s\in (-1,1)$, which is also referred to as the \textit{Flory--Huggins potential}. Here, ${\theta>0}$ is the absolute temperature of the mixture, and $\theta_c$ is a critical temperature such that phase separation occurs if $0<\theta<\theta_c$. The logarithmic potential is classified as a singular potential as its derivative $F'$ diverges to $\pm\infty$ when its argument approaches $\pm 1$. It is often approximated by the \textit{polynomial double-well potential}
\begin{align}
    \label{DEF:F:REG}
    F(s) = \frac{\alpha}{4}(s^2-1)^2
    \quad\text{for all $s\in (-1,1)$},
\end{align}
where $\alpha>0$ is a suitable constant. In the following, we will simply set $\alpha=1$ for simplicity.
Another common singular potential is the \textit{double-obstacle potential}, which is given by
\begin{align}
    \label{DEF:F:DOB}
    F(s)=
    \begin{cases}
    \frac{1}{2}(1-s^2)&\text{if $|s|\leq 1$,}\\
    +\infty&\text{else}.
    \end{cases}
\end{align}

Although originally introduced to model spinodal decomposition in binary alloys, the Cahn--Hilliard equation found many new applications in the recent past. 
The anisotropic Cahn--Hilliard equation has also been used to describe anisotropic solidification processes. In particular, it is capable of describing snow crystal growth as well as the solidification of metals, see, e.g., \cite{BGN2013stable, vch}. Another important application of the anisotropic Cahn--Hilliard equation is to model the growth of thin solid films, which play an important role in the self-organization of nanostructures, see, e.g.,
\cite{Ratz2006surface, Dziwnik17anisotropic, Torabi209, GKNZ}. In this context, also works on the anisotropic Allen--Cahn equation \cite{
Elliott96limit, Alfaro2010motion,
Graser2013time, laux2022diffuseinterface} and higher order models \cite{MR3918378} are relevant. For more information
on regularity theory and separation properties of the isotropic Cahn--Hilliard equation with logarithmic potential, we recommend the paper \cite{Gal2023} as well as the book \cite{Miranvillebook}.


The goal of the present paper is to show the existence of weak solutions to the anisotropic Cahn--Hilliard equation with logarithmic potential, to prove a uniqueness result, to show regularity results, and to establish a separation property 
\revised{under suitable assumptions on the initial data.
It states that for almost all $t\in[0,T]$, there exists $\delta(t) \in (0,1]$ such that
\begin{equation*}
    \norm{\varphi(t)}_{L^\infty(\Omega)} \le 1 - \delta(t).
\end{equation*}
Here, the term ``separation property'' means that the solution stays away from the pure states, which are represented by $\varphi =\pm 1$.
In the case $d=2$, we are even able to establish a separation property that is uniform in time. More precisely, there exists a constant $\delta^*\in (0,1]$ such that for all $(t,\x)\in \Omega_T$,}
\begin{equation*}
    \revised{\abs{\varphi(t,\x)} \le 1 - \delta^*.}
\end{equation*}
These results are already well-known in the case of an isotropic energy (see, e.g., \cite{Gal2023,Miranvillebook}), but become non-trivial in the case of anisotropic energies. This is firstly because the resulting equations are much more non-linear and secondly because the anisotropic potential $A$ is not smooth. The latter already follows from the requirement that $A$ is positively two-homogeneous, but can also be traced back to the usage of non-smooth anisotropies $\gamma$. It is therefore surprising that we can still prove $H^2$-regularity with respect to the spatial variables, and that we are also able to show that $\partial_t \varphi$ lies in $L^2(0,T;H^1(\Omega))$. To establish separation properties, typically a quite high regularity of the solutions is required. We are able to derive certain estimates for $F'(\varphi)$, which at least allows us to prove a separation property for almost all times.

The outline of the paper is as follows.
In Section~\ref{SECT:PRELIM}, we introduce the notation used in this paper, present the general assumptions and state the main results. In Section~\ref{SECT:EXLOG}, we prove the existence result for the anisotropic Cahn--Hilliard equation with a logarithmic potential. Section~\ref{SECT:UNIQ} is devoted to the uniqueness of weak solutions and in Section~\ref{SECT:REG:QLE}, we present regularity results for a general class of quasilinear second-order elliptic equations under minimal assumptions on the data. These results are then used in Section~\ref{SECT:REG:AICH} to prove higher regularity of weak solutions to the anisotropic Cahn--Hilliard equation, which can in turn be used to establish a separation property for almost all times.


\section{Preliminaries and main results} 
\label{SECT:PRELIM}

\subsection{Notation} \label{SUBSECT:NOT}
We first introduce some notation that is supposed to hold throughout this paper.

For any $1 \leq p \leq \infty$ and $k \geq 0$, we use the standard notation $L^p(\Omega)$ and $W^{k,p}(\Omega)$ for Lebesgue and Sobolev spaces on any open set $\Omega \subset \R^d$ (with $d\in\N$). The corresponding norms are denoted by $\norm{\,\cdot\,}_{L^p(\Omega)}$ and $\norm{\,\cdot\,}_{W^{k,p}(\Omega)}$.  
In the case $p = 2$, these spaces are Hilbert spaces and we write $H^k(\Omega) = W^{k,2}(\Omega)$. As usual, we identify $H^0(\Omega)$ with $L^2(\Omega)$. 

For any Banach space $X$, its dual space is denoted by $X'$, and the associated duality pairing between elements $y\in X'$ and $x\in X$ is denoted by $\ang{y}{x}_X$. In the case $X=H^1(\Omega)$, we will simply write $\ang{\cdot}{\cdot}$ instead of $\ang{\cdot}{\cdot}_{H^1(\Omega)}$.
If $X$ is a Hilbert space, we denote its inner product by $(\cdot, \cdot)_X$. Moreover, we write
\begin{align*}
\mean{f}_\Omega := 
\frac{1}{\abs{\Omega}} \ang{f}{1}_{H^1(\Omega)} \quad \text{ for } f \in H^1(\Omega)' 
\end{align*}
to denote the generalized spatial mean of $f$. Here, $\abs{\Omega}$ denotes the $d$-dimensional Lebesgue measure of $\Omega$.  
With the usual identification $L^1(\Omega) \subset H^1(\Omega)'$ it holds that
$\mean{f}_\Omega = \frac{1}{\abs{\Omega}} \int_\Omega f \dx$ if 
$f \in L^1(\Omega)$.
In addition, we introduce 
\begin{align*}
    H^1_{(m)}(\Omega) &:= \big\{ u \in H^1(\Omega) \,\big\vert\, \mean{u}_\Om = m \big\}
    \qquad\text{for any $m\in\R$},
    \\
    H^{-1}_{(0)}(\Omega) &:= \big\{ f \in \big(H^1(\Omega)\big)' \,\big\vert\, \mean{f}_\Om = 0 \big\}.
\end{align*}
We point out that for every $m\in\R$, $H^1_{(m)}(\Omega)$ is an affine subspace of the Hilbert space $H^1(\Omega)$. In the case $m=0$, it is even a closed linear subspace, meaning that $H^1_{(0)}(\Omega)$ is also a Hilbert space.

Let us now assume that $\Omega\subset \R^d$ with $d\in\{2,3\}$ is a bounded Lipschitz domain. It is well-known that there exists a solution operator \begin{align*}
    \PN:H^{-1}_{(0)}(\Omega) \to H^1_{(0)}(\Omega), \quad f \mapsto u_f
\end{align*}
mapping any functional $f\in H^{-1}_{(0)}(\Omega)$ onto its corresponding solution $u_f\in H^1_{(0)}(\Omega)$ of the Poisson--Neumann problem
\begin{align*}
    -\Delta u_f = f\quad\text{in $\Omega$}, \qquad \Grad u_f \cdot\n = 0 \quad\text{on $\Gamma = \partial\Omega$}.
\end{align*}
Then, the bilinear form
\begin{align*}
    &\scp{\,\cdot\,}{\,\cdot\,}_{-1}: H^{-1}_{(0)}(\Omega) \times H^{-1}_{(0)}(\Omega) \to \R,
    \\
    &\scp{f}{g}_{-1} := \scp{\Grad\PN f}{\Grad\PN g}_{L^2(\Omega)}
\end{align*}
defines an inner product on $H^{-1}_{(0)}(\Omega)$, and the function
\begin{align*}
    \norm{\,\cdot\,}_{-1}: H^{-1}_{(0)}(\Omega) \to \R,
    \quad
    \norm{f}_{-1} := \sqrt{\scp{f}{f}_{-1}}
\end{align*}
defines the induced norm on $H^{-1}_{(0)}(\Omega)$. Note that $H^{1}_{(0)}(\Omega) \subset H^{-1}_{(0)}(\Omega)$, and the functions $\scp{\,\cdot\,}{\,\cdot\,}_{-1}$ and $\norm{\,\cdot\,}_{-1}$ also define an inner product and a norm, respectively, on the space $H^{1}_{(0)}(\Omega)$.

\bigskip

\subsection{General assumptions} We make the following general assumptions.
\begin{enumerate}[label=$(\mathbf{A \arabic*})$, ref = $\mathbf{A \arabic*}$]
	\item \label{ass:dom} The set $\Omega \subset \R^d$ with $d \in \{2,3\}$ is a bounded Lipschitz domain with outer unit normal vector field $\n$.
	Moreover, the final time $T$ and the interface parameter $\ep$ are \revised{arbitrary} positive constants.  
    For simplicity, we set $\ep=1$ as the choice of this parameter does not have any impact on the mathematical analysis. 
	\item \label{ass:A} The function $A:\R^d\to \R$ is continuously differentiable, positive on $\R^d\setminus\{\mathbf 0\}$, and positively two-homogeneous, i.e.,
    \begin{align*}
        A(\lambda\p) = \lambda^2 A(\p)
        \quad \text{for all $\lambda>0$ and $\p\in\R^d$}.
    \end{align*}
    As a consequence, there exist constants $A_0,A_1,a_1\in\R$ with $0<A_0\le A_1$ and $a_1>0$ such that 
	\begin{align*}
	    A_0  \abs{\p}^2 \le A(\p) \le A_1  \abs{\p}^2
        \quad\text{and}\quad
        \abs{A'(\p)} \le a_1 \abs{\p}
	    \quad
        \text{for all $\p\in\R^d$}.
	\end{align*}
	The gradient $A':\R^d\to\R^d$ is assumed to be strongly monotone, i.e., there exists a constant $a_0>0$ such that
	\begin{align*}
	    \big(A'(\p)-A'(\q)\big)\cdot(\p-\q) \ge a_0 \abs{\p-\q}^2 
	    \quad\text{for all $\p,\q\in\R^d$}.
	\end{align*}
	This directly implies that $A$ is strongly convex and thus strictly convex. 
    \item \label{ass:M} The function $M:\R^d\times\R\to\R$ is continuous and there exist constants $M_0,M_1\in\R$ with $0<M_0\le M_1$ such that
	\begin{align*}
	    M_0 \le M(\p,s) \le M_1
	    \quad\text{for all $\p\in\R^d$ and $s\in\R$}.
	\end{align*}
\end{enumerate}

\subsection{Weak solutions for smooth potentials}

The anisotropic Cahn--Hilliard equation \eqref{AICH} endowed with a regular potential $F$ was investigated in \cite{GKNZ}. In this subsection, we recall the most important results.

We consider potentials $F$ satisfying the following assumption:
\begin{enumerate}[label=$(\mathbf{F \arabic*})$, ref = $\mathbf{F \arabic*}$]
	\item \label{ass:F1} 
	The potential $F:\R\to\R$ is continuously differentiable. 
	Moreover, there exists an exponent $p\in[2,6)$ as well as 
	non-negative constants $B_F$, $C_{F}$ and $C_{F'}$ such that
		\begin{align*}
		- B_{F} \le F(s) \le C_{F}(1+\abs{s}^p) 
		\quad\text{and}\quad
		\abs{F'(s)} \le C_{F'}(1+\abs{s}^{p-1})
		\end{align*}
	for all $s\in\R$.
\end{enumerate}
The polynomial double-well potential introduced in \eqref{DEF:F:REG} naturally fulfills \eqref{ass:F1} with $p=4$. 
However, both the logarithmic potential (see \eqref{DEF:F:LOG}) and the double-obstacle potential (see \eqref{DEF:F:DOB}) do not satisfy this assumption.

\pagebreak[2]

A weak solution of the anisotropic Cahn--Hilliard equation \eqref{AICH} is defined as follows.
\begin{definition} \label{DEF:WS}
    Suppose that the assumptions \eqref{ass:dom}--\eqref{ass:M} and \eqref{ass:F1} are fulfilled, and let ${\varphi_0\in H^1(\Omega)}$ be any initial datum.
    Then, the pair $(\varphi,\mu)$ is called a weak solution of the anisotropic Cahn--Hilliard equation \eqref{AICH} if the following conditions are fulfilled:
    \begin{enumerate}[label=\textnormal{(\roman*)}, ref = \textnormal{(\roman*)}]
        \item \label{DEF:WS:REG} 
        The functions $\varphi$, $\mu$ and $F'(\varphi)$ have the regularity
        \begin{alignat*}{2}
            \varphi &\in C\big([0,T];L^2(\Omega)\big) 
                \cap L^\infty\big(0,T;H^1(\Omega)\big)
                \cap H^1\big(0,T;H^1(\Omega)'\big),
            \\
            \mu &\in L^2\big(0,T;H^1(\Omega)\big),
            \\
            F'(\varphi) &\in L^2\big(0,T;L^2(\Omega)\big).
        \end{alignat*}
        \item \label{DEF:WS:WEAK} 
        The pair $(\varphi,\mu)$ satisfies the weak formulations
        \begin{subequations}
        \label{DEF:WS:WF}
        \begin{alignat}{2}
            \label{DEF:WS:WF1}
            \bigang{\delt\varphi}{\zeta}_{H^1(\Omega)} 
            &= - \intO M(\Grad\varphi,\varphi)\, \Grad \mu \cdot \Grad \zeta \dx,
            \\
            \label{DEF:WS:WF2}
            \intO \mu\, \eta \dx 
            &= \intO A'(\Grad\varphi)\cdot \Grad \eta + F'(\varphi)\,\eta \dx
        \end{alignat}
        \end{subequations}
        a.e.~in $[0,T]$ for all test functions $\zeta,\eta\in H^1(\Omega)$. Moreover, $\varphi$ satisfies the initial condition 
        \begin{align}
            \label{DEF:WS:INI}
            \varphi(0) = \varphi_0 \qquad\text{a.e.~in $\Omega$.}
        \end{align}
        \item \label{DEF:WS:ENERGY}
        The pair $(\varphi,\mu)$ satisfies the weak energy dissipation law
        \begin{align}
            \label{DEF:WS:DISS}
            E\big(\varphi(t)\big) 
            + \frac 12 \int_0^t \intO M(\Grad\varphi,\varphi)\, \abs{\Grad\mu}^2 \dx\ds
            \le E(\varphi_0)
        \end{align}
        for almost all $t\in[0,T]$.
    \end{enumerate}
\end{definition}

\medskip
\pagebreak[2]

The existence of such a weak solution was established in \cite{GKNZ}. We restate the existence result as the following proposition.
\begin{proposition} \label{PROP:REGPOT}
    Suppose that the assumptions \eqref{ass:dom}--\eqref{ass:M} and \eqref{ass:F1} are fulfilled, and let ${\varphi_0\in H^1(\Omega)}$ be any initial datum. Then, there exists a weak solution $(\varphi,\mu)$ to the system \eqref{AICH} in the sense of Definition~\ref{DEF:WS}.
\end{proposition}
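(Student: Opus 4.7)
The plan is to construct a weak solution via a semi-implicit minimizing-movements scheme that exploits the weighted $H^{-1}$-gradient flow structure of \eqref{AICH}. Fix $N\in\N$, set $\tau := T/N$ and $\varphi^0 := \varphi_0$. Given $\varphi^n\in H^1(\Omega)$, freeze the mobility as $M_n := M(\Grad\varphi^n,\varphi^n)\in [M_0,M_1]$ and define $\varphi^{n+1}$ as a minimizer of
\begin{equation*}
    J_n(\varphi) := E(\varphi) + \frac{1}{2\tau}\bignorm{\varphi - \varphi^n}_{-1,M_n}^2
\end{equation*}
over the affine subspace $H^1_{(m_0)}(\Omega)$ with $m_0 := \mean{\varphi_0}_\Om$, where $\|\cdot\|_{-1,M_n}$ denotes the $H^{-1}$-norm weighted by $M_n$ (induced by the elliptic operator $-\Grad\cdot(M_n\Grad\,\cdot\,)$ with Neumann boundary conditions). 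Existence of $\varphi^{n+1}$ follows from the direct method of the calculus of variations: \eqref{ass:A}--\eqref{ass:M} together with the lower bound on $F$ in \eqref{ass:F1} yield coercivity on $H^1(\Omega)$, while weak lower semicontinuity is a consequence of the convexity and continuity of $A$ combined with the $p$-growth of $F$ with $p<6$ and the compact embedding $H^1(\Omega)\emb L^p(\Omega)$. The associated Euler--Lagrange equation then supplies a Lagrange multiplier $\mu^{n+1}\in H^1(\Omega)$ solving the natural discrete counterpart of \eqref{DEF:WS:WF1}--\eqref{DEF:WS:WF2}.

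From the minimality inequality $J_n(\varphi^{n+1})\le J_n(\varphi^n)$ together with the discrete chemical-potential equation, one deduces the discrete energy dissipation
\begin{equation*}
    E(\varphi^{n+1}) + \frac{\tau}{2}\intO M_n \abs{\Grad\mu^{n+1}}^2 \dx \le E(\varphi^n).
\end{equation*}
Summing over $n$, combined with \eqref{ass:M} and a bound on the spatial mean of $\mu^{n+1}$ obtained by testing the second discrete equation with the constant function $1$ and exploiting the $p$-growth of $F'$, yields uniform bounds for the piecewise-constant and piecewise-affine interpolants $\bar\varphi_\tau, \hat\varphi_\tau$ in $L^\infty(0,T;H^1(\Omega))$, for $\bar\mu_\tau$ in $L^2(0,T;H^1(\Omega))$, and for $\partial_t\hat\varphi_\tau$ in $L^2(0,T;H^1(\Omega)')$. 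The Aubin--Lions lemma then provides strong convergence of $\hat\varphi_\tau$ in $C([0,T];L^2(\Omega))$ along a subsequence, which gives $\varphi(0)=\varphi_0$, and the $p$-growth of $F'$ together with $p<6$ delivers $F'(\bar\varphi_\tau)\to F'(\varphi)$ in $L^2(0,T;L^2(\Omega))$.

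The main obstacle is the passage to the limit in the nonlinear term $A'(\Grad\bar\varphi_\tau)$: since $A$ is only $C^1$ with a possibly non-Lipschitz gradient, the weak convergence of $\Grad\bar\varphi_\tau$ is not sufficient to identify the limit. The plan is to invoke a Minty-type monotonicity argument based on the strong monotonicity of $A'$ postulated in \eqref{ass:A}. Testing the discrete equation for $\mu^{n+1}$ against $\bar\varphi_\tau - \varphi$ and using the already-established strong convergences of $\bar\varphi_\tau$ and $F'(\bar\varphi_\tau)$ together with the weak convergence of $\bar\mu_\tau$, one obtains
\begin{equation*}
    \limsup_{\tau\to 0}\int_0^T\!\!\intO A'(\Grad\bar\varphi_\tau)\cdot(\Grad\bar\varphi_\tau - \Grad\varphi)\dx\dt \le 0,
\end{equation*}
which, combined with the pointwise inequality
\begin{equation*}
    a_0\abs{\Grad\bar\varphi_\tau - \Grad\varphi}^2 \le \bigl(A'(\Grad\bar\varphi_\tau) - A'(\Grad\varphi)\bigr)\cdot(\Grad\bar\varphi_\tau - \Grad\varphi),
\end{equation*}
forces $\Grad\bar\varphi_\tau\to\Grad\varphi$ strongly in $L^2((0,T)\times\Omega)$. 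The continuity of $A'$ and $M$ then allows the identification of $A'(\Grad\varphi)$ and $M(\Grad\varphi,\varphi)\Grad\mu$ as the correct limits, so that $(\varphi,\mu)$ satisfies \eqref{DEF:WS:WF1}--\eqref{DEF:WS:WF2}. The energy inequality \eqref{DEF:WS:DISS} is finally obtained by summing the discrete dissipation and passing to the limit using weak lower semicontinuity of $E$ (with the strong convergence of $\Grad\bar\varphi_\tau$ to handle the now-known $A$-part) and of the dissipation integral.
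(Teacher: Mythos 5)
The paper does not prove Proposition~\ref{PROP:REGPOT} itself --- it imports the result from \cite{GKNZ}, whose method it describes precisely as a weighted $H^{-1}$-gradient flow (minimizing-movements) construction; your semi-implicit scheme with frozen mobility, discrete energy dissipation, and Minty-type identification of $A'(\Grad\varphi)$ via the strong monotonicity in \eqref{ass:A} is therefore essentially the same approach (and your monotonicity step mirrors the argument the present paper uses in Section~\ref{SECT:EXLOG} to upgrade $\Grad\varphi_n$ to strong $L^2$ convergence). One point to tighten: for $d=3$ and $p\in(4,6)$ the bound $\varphi\in L^\infty(0,T;H^1(\Omega))\emb L^\infty(0,T;L^6(\Omega))$ only gives $F'(\bar\varphi_\tau)$ in $L^\infty(0,T;L^{6/(p-1)}(\Omega))$ with $6/(p-1)<2$, so the claimed strong convergence $F'(\bar\varphi_\tau)\to F'(\varphi)$ in $L^2(0,T;L^2(\Omega))$ --- and the regularity $F'(\varphi)\in L^2(0,T;L^2(\Omega))$ required by Definition~\ref{DEF:WS}\ref{DEF:WS:REG} --- does not follow directly from ``$p$-growth plus $p<6$'' and needs a separate argument, even though convergence in $L^1(0,T;L^{6/5}(\Omega))$ already suffices to pass to the limit in \eqref{DEF:WS:WF2}.
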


\revised{We point out that since the final time $T$ can be chosen arbitrarily (see \eqref{ass:dom}), the weak solution provided by Proposition~\ref{PROP:REGPOT} actually exists globally in time, i.e., on the whole interval $[0,\infty)$.}
To obtain an additional uniform estimate for $F'(\varphi)$, we need a stronger assumption on the potential $F$:

\begin{enumerate}[label=$(\mathbf{F \arabic*})$, ref = $\mathbf{F \arabic*}$, start=2]
	\item \label{ass:F2} 
    The potential $F:\R\to\R$ is twice continuously differentiable, non-negative,
	and there exist constants $c_0,c_1 \ge 0$ such that 
    \begin{align}
    \label{COND:F''}
        - c_0 \le F''(s) \le c_1 \quad\text{for all $s\in\R$}.
    \end{align}
\end{enumerate}

We point out that \eqref{ass:F2} implies that \eqref{ass:F1} holds with $p=2$, which means that the potential $F$ has at most quadratic growth.
Provided that $F$ satisfies \eqref{ass:F2}, we obtain the following uniform estimate on $F'(\varphi)$.

\begin{corollary} \label{COR:REGPOT}
    Suppose that the assumptions \eqref{ass:dom}--\eqref{ass:M}
    and \eqref{ass:F2} are fulfilled. 
    Let $\varphi_0\in H^1(\Omega)$ be any initial datum satisfying 
    $\abs{\mean{\varphi_0}_\Omega} \le 1 - \kappa$ for some $\kappa\in(0,1]$,
    and let $(\varphi,\mu)$ be a corresponding weak solution. Then, there exists a constant $c>0$ depending only on $\varphi_0$, $E(\varphi_0)$, $c_0$ and the constants in \eqref{ass:dom}--\eqref{ass:M}, but not on $c_1$, such that for almost all $t\in [0,T]$,
    \begin{align}
    \label{EST:F'}
        \bignorm{F'\big(\varphi(t)\big)}_{L^2(\Omega)}^2 \le \frac{c}{\kappa^2}
        \Big[ 1 & + \norm{F}_{L^\infty([-R,R])}^2 + \norm{\Grad\mu(t)}_{L^2(\Omega)}^2\Big],
    \end{align}
    where $ R:= \abs{\mean{\varphi_0}_\Omega} + \frac{\kappa}{2} < 1$.
\end{corollary}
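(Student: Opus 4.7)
My plan is to decompose $F'(\varphi) = \bigl(F'(\varphi) - \mean{F'(\varphi)}_\Om\bigr) + \mean{F'(\varphi)}_\Om$ and to bound the fluctuation and the spatial mean separately by two tailored test functions in \eqref{DEF:WS:WF2}. As a preliminary step, I would take $\zeta = 1$ in \eqref{DEF:WS:WF1} to obtain mass conservation, so that $\mean{\varphi(t)}_\Om = m := \mean{\varphi_0}_\Om$ with $\abs{m}\le 1-\kappa$, and take $\eta = 1$ in \eqref{DEF:WS:WF2} to deduce $\mean{\mu(t)}_\Om = \mean{F'(\varphi(t))}_\Om$. The weak energy dissipation \eqref{DEF:WS:DISS} together with \eqref{ass:A} further gives uniform control on $\norm{\nabla\varphi(t)}_{L^2(\Om)}$ by $\sqrt{E(\varphi_0)/A_0}$.

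To bound the fluctuation part, I would test \eqref{DEF:WS:WF2} with the zero-mean function $\eta = F'(\varphi) - \mean{F'(\varphi)}_\Om$. Since $A$ is positively two-homogeneous, Euler's identity $A'(\p) \cdot \p = 2 A(\p)$ together with $\nabla F'(\varphi) = F''(\varphi) \nabla\varphi$ gives
\begin{align*}
    \int_\Om A'(\nabla\varphi) \cdot \nabla F'(\varphi) \dx
    = 2\int_\Om F''(\varphi)\, A(\nabla\varphi) \dx
    \ge -2\, c_0\, A_1\, \norm{\nabla\varphi}_{L^2(\Om)}^2,
\end{align*}
where I used $F''\ge -c_0$ and $A(\p) \le A_1 \abs{\p}^2$. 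Cauchy--Schwarz on the left-hand side, the Poincar\'e--Wirtinger inequality for $\mu - \mean{\mu}_\Om$, and Young's inequality then lead to $\bignorm{F'(\varphi) - \mean{F'(\varphi)}_\Om}_{L^2(\Om)}^2 \le C\bigl(1 + \norm{\nabla\mu}_{L^2(\Om)}^2\bigr)$ with $C$ independent of $c_1$.

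For the mean, I would next test \eqref{DEF:WS:WF2} with the zero-mean function $\eta = \varphi - m$. Using $A'(\p) \cdot \p = 2 A(\p) \ge 0$, this gives
\begin{align*}
    \int_\Om F'(\varphi)(\varphi - m) \dx
    = \int_\Om (\mu - \mean{\mu}_\Om)(\varphi - m)\dx - 2\int_\Om A(\nabla\varphi) \dx
    \le C\bigl(1 + \norm{\nabla\mu}_{L^2(\Om)}\bigr).
\end{align*}
To convert this into a bound on $\abs{\mean{F'(\varphi)}_\Om}$, I would invoke a Miranville--Zelik-type lemma exploiting $\abs{m} \le 1 - \kappa$: combining the convexity-type inequality $F'(s)(s-m) \ge F(s) - F(m) - \tfrac{c_0}{2}(s-m)^2$ (which follows from $F''\ge -c_0$) with the fact that $\abs{\varphi - m} \ge \kappa/2$ on $\{\abs{\varphi}\ge R\}$, one should derive a lower bound of the form $\tfrac{\kappa}{2}\norm{F'(\varphi)}_{L^1(\Om)} - C\bigl(1 + \norm{F}_{L^\infty([-R,R])}\bigr)\abs{\Om}$. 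This yields $\abs{\mean{F'(\varphi)}_\Om} \le \tfrac{C}{\kappa}\bigl(1 + \norm{F}_{L^\infty([-R,R])} + \norm{\nabla\mu}_{L^2(\Om)}\bigr)$, and the claim then follows by combining this with the fluctuation bound through the orthogonal decomposition $\norm{F'(\varphi)}_{L^2(\Om)}^2 = \bignorm{F'(\varphi) - \mean{F'(\varphi)}_\Om}_{L^2(\Om)}^2 + \abs{\Om}\,\abs{\mean{F'(\varphi)}_\Om}^2$.

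The main obstacle is the final step above. Unlike the classical setting of a singular logarithmic potential, where $F'$ exhibits a useful sign structure near $\pm 1$, assumption \eqref{ass:F2} provides no such information. The challenge is to obtain a bound involving $\norm{F}_{L^\infty([-R,R])}$ (and not the $c_1$-dependent $\norm{F'}_{L^\infty([-R,R])}$), which requires a delicate argument combining the non-negativity of $F$, the convexity from $F'' \ge -c_0$, and a careful region decomposition according to whether $\abs{\varphi}$ lies above or below $R$.
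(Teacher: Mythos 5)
First, a point of reference: the paper does not prove this corollary itself; it states that the proof is in \cite[Corollary~4.4]{GKNZ} and only explains how \eqref{EST:F'} is extracted from the argument there. So the comparison is against the standard argument that the paper imports. Your overall architecture is exactly that argument (energy bound on $\Grad\varphi$, mass conservation, $\mean{\mu}_\Om=\mean{F'(\varphi)}_\Om$, testing with $\varphi-m$ to control $\norm{F'(\varphi)}_{L^1}$ and hence the mean, testing with (the zero-mean part of) $F'(\varphi)$ together with Euler's identity $A'(\p)\cdot\p=2A(\p)$ and $F''\ge -c_0$ to control the fluctuation), and both of your testing steps are legitimate: $F'(\varphi)\in H^1(\Omega)$ because \eqref{ass:F2} makes $F'$ Lipschitz, and your fluctuation bound correctly avoids any dependence on $c_1$.

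The genuine gap is the step you yourself flag as the main obstacle, and the ingredients you propose do not assemble into it. The inequality $F'(s)(s-m)\ge F(s)-F(m)-\tfrac{c_0}{2}(s-m)^2$ only yields, via $F\ge 0$, a lower bound by a constant; it produces no term proportional to $\abs{F'(s)}$. Likewise, $\abs{s-m}\ge\kappa/2$ on $\{\abs{s}\ge R\}$ gives $F'(s)(s-m)\ge\tfrac{\kappa}{2}\abs{F'(s)}$ only when $F'(s)$ and $s-m$ have the same sign, and nothing controls the opposite case. The correct mechanism is a shift of the reference point rather than a region decomposition: set $G(s):=F(s)+\tfrac{c_0}{2}s^2$, which is convex and non-negative, and for each $s$ choose $r:=m+\tfrac{\kappa}{2}\,\mathrm{sgn}\big(G'(s)\big)$, so that $\abs{r}\le R$. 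Convexity gives $G'(s)(s-r)\ge G(s)-G(r)\ge -\max_{[-R,R]}G$, and since $G'(s)(r-m)=\tfrac{\kappa}{2}\abs{G'(s)}$ one obtains
\begin{equation*}
    G'(s)(s-m)\;\ge\;\tfrac{\kappa}{2}\abs{G'(s)}-\norm{F}_{L^\infty([-R,R])}-\tfrac{c_0}{2}.
\end{equation*}
Writing $F'(s)=G'(s)-c_0 s$ and absorbing the resulting $c_0\abs{s}$ and $c_0\abs{s}\abs{s-m}$ terms into $\norm{\varphi}_{L^2(\Om)}^2\le C$ (energy plus mass conservation) then yields $\tfrac{\kappa}{2}\norm{F'(\varphi)}_{L^1(\Om)}\le C\big(1+\norm{F}_{L^\infty([-R,R])}+\norm{\Grad\mu}_{L^2(\Om)}\big)$, which is exactly the bound on the mean you need; this is also the step where $R=\abs{m}+\kappa/2$ and the factor $\kappa^{-1}$ (hence $\kappa^{-2}$ after squaring) originate. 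With this lemma inserted, your proof closes and coincides in substance with the cited one.
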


A proof of this result can be found in \cite{GKNZ}. We point out that the inequality \eqref{EST:F'} was not explicitly stated there but can clearly be extracted from the proof of \cite[Corollary~4.4]{GKNZ}. 

The connection between \eqref{EST:F'} and the uniform estimate stated in \cite[Corollary~4.4]{GKNZ} is as follows: 
Due to \eqref{ass:M}, \eqref{ass:F2} and the weak energy dissipation law \eqref{DEF:WS:DISS}, we obtain the bound
\begin{align}
    \label{EST:UNIFORM}
    \frac 12 M_0 \norm{\Grad\mu}_{L^2(0,T;L^2(\Omega))}^2 \le E(\varphi_0).
\end{align}
We now integrate \eqref{EST:F'} with respect to time from $0$ to $T$. Employing \eqref{EST:UNIFORM}, we conclude
\begin{align}
\label{EST:F':INT}
    \bignorm{F'\big(\varphi\big)}_{L^2(\OT)}^2 \le \frac{c^*}{\kappa^2}
    \Big[ 1 + \norm{F}_{L^\infty([-R,R])}^2 \Big],
\end{align}
for some constant $c^*>0$ depending only on $\varphi_0$, $E(\varphi_0)$, $c_0$ and the constants in \eqref{ass:dom}--\eqref{ass:M}, which is exactly the estimate stated in \cite[Corollary~4.4]{GKNZ}.

We point out that the more precise estimate \eqref{EST:F'} will be needed in the context of Theorem~\ref{THM:REG:2}, where $F'(\varphi)\in L^\infty\big(0,T;L^2(\Omega)\big)$ will be shown under suitable assumptions.

\subsection{Weak solutions for the logarithmic potential}

In this subsection, we choose $F$ as the logarithmic potential that was introduced in \eqref{DEF:F:LOG}. It can be decomposed as $F(s) = F_1(s) + F_2(s)$,
where
\begin{subequations}
\label{DEC:LOG}
\begin{align}
    F_1(s) &= \frac{\theta}{2} \big[(1+s)\,\ln(1+s) +(1-s)\,\ln(1-s)\big],
    \\
    F_2(s) &= \frac{\theta_c}{2}(1-s^2)
\end{align}
\end{subequations}
for all $s\in (-1,1)$. We point out that $F_1$ is convex whereas $F_2$ is concave. Consequently, the derivative $F_1'$ is monotonically increasing. It is worth mentioning that the function $F_1$ (and thus also $F$) possesses a continuous extension onto the interval $[-1,1]$. However, this does not hold for $F_1'$ as this function becomes singular at $\pm 1$.

In this situation, a weak solution of the anisotropic Cahn--Hilliard equation \eqref{AICH} is defined as follows.
\begin{definition} \label{DEF:WS*}
    Suppose that the assumptions \eqref{ass:dom}--\eqref{ass:M} are fulfilled, let $F$ be the logarithmic potential given by \eqref{DEF:F:LOG}, and let ${\varphi_0\in H^1(\Omega)}$ be any initial datum with $\abs{\varphi_0}\le 1$ a.e.~in $\Omega$ as well as $\abs{\mean{\varphi_0}}<1$.
    Then, the pair $(\varphi,\mu)$ is called a weak solution of the anisotropic Cahn--Hilliard equation \eqref{AICH} if the following conditions are fulfilled: \vspace{-1ex}
    \begin{enumerate}[label=\textnormal{(\roman*)}, ref = \textnormal{(\roman*)}]
        \item \label{DEF:WS:REG*} 
        The functions $\varphi$, $\mu$ and $F'(\varphi)$  have the regularity
        \begin{alignat*}{2}
            \varphi &\in C\big([0,T];L^2(\Omega)\big) 
                \cap L^\infty\big(0,T;H^1(\Omega)\big)
                \cap H^1\big(0,T;H^1(\Omega)'\big),
            \\
            \mu &\in L^2\big(0,T;H^1(\Omega)\big),
            \\
            F'(\varphi) &\in L^2\big(0,T;L^2(\Omega)\big),
        \end{alignat*}
        and it holds $\abs{\varphi}<1$ almost everywhere in $\OT$.
        \item \label{DEF:WS:WEAK*} 
        The pair $(\varphi,\mu)$ satisfies the weak formulations
        \begin{subequations}
        \label{DEF:WS:WF*}
        \begin{alignat}{2}
            \label{DEF:WS:WF1*}
            \bigang{\delt\varphi}{\zeta}_{H^1(\Omega)} 
            &= - \intO M(\Grad\varphi,\varphi)\, \Grad \mu \cdot \Grad \zeta \dx,
            \\
            \label{DEF:WS:WF2*}
            \intO \mu\, \eta \dx 
            &= \intO A'(\Grad\varphi)\cdot \Grad \eta + F'(\varphi)\,\eta \dx
        \end{alignat}
        \end{subequations}
        a.e.~in $[0,T]$ for all test functions $\zeta,\eta\in H^1(\Omega)$. Moreover, $\varphi$ satisfies the initial condition 
        \begin{align}
            \label{DEF:WS:INI*}
            \varphi(0) = \varphi_0 \qquad\text{a.e.~in $\Omega$.}
        \end{align}
        \item \label{DEF:WS:ENERGY*}
        The pair $(\varphi,\mu)$ satisfies the weak energy dissipation law
        \begin{align}
            \label{DEF:WS:DISS*}
            E\big(\varphi(t)\big) 
            + \frac 12 \int_0^t \intO M(\Grad\varphi,\varphi)\, \abs{\Grad\mu}^2 \dx\ds
            \le E(\varphi_0)
        \end{align}
        for \revised{all} $t\in[0,T]$.
    \end{enumerate}
\end{definition}

\medskip

The existence of such a weak solution is ensured by the following theorem.

\begin{theorem} \label{THM:LOGPOT}
    Suppose that the assumptions \eqref{ass:dom}--\eqref{ass:M} are fulfilled, let $F$ be the logarithmic potential given by \eqref{DEF:F:LOG}, and let ${\varphi_0\in H^1(\Omega)}$ be any initial datum with $\abs{\varphi_0}\le 1$ a.e.~in $\Omega$ as well as $\abs{\mean{\varphi_0}}<1$. Then, there exists a weak solution $(\varphi,\mu)$ to the system \eqref{AICH} in the sense of Definition~\ref{DEF:WS*}. Moreover, there exists a constant $C>0$ depending only on $\varphi_0$, $E(\varphi_0)$, $c_0$ and the constants in \eqref{ass:dom}--\eqref{ass:M}, but not on $c_1$, such that for almost all $t\in [0,T]$,
    \begin{align}
    \label{EST:F':LOG}
        \bignorm{F'\big(\varphi(t)\big)}_{L^2(\Omega)}^2 \le C
        \Big[ 1 + \norm{F}_{L^\infty([-1,1])}^2 + 
            \norm{\Grad\mu(t)}_{L^2(\Omega)}^2\Big].
    \end{align}
\end{theorem}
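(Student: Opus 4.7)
The plan is to approximate the singular logarithmic potential by a family of $C^2$ potentials satisfying \eqref{ass:F2} uniformly in the regularization parameter, invoke Proposition~\ref{PROP:REGPOT} and Corollary~\ref{COR:REGPOT} to obtain approximate weak solutions together with the crucial uniform estimate on $F_\delta'(\varphi_\delta)$, and then pass to the limit using monotonicity and compactness arguments. Concretely, for each $\delta \in (0,\kappa/4)$ with $\kappa = 1 - \abs{\mean{\varphi_0}_\Omega} \in (0,1]$, I would construct $F_\delta \in C^2(\R)$ which coincides with $F$ on $[-1+\delta, 1-\delta]$ and is extended to $\R$ so that the convex-concave splitting $F_\delta = F_{1,\delta} + F_2$ is preserved with $F_{1,\delta}$ convex (e.g.\ via a quadratic extension past $\pm(1-\delta)$). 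Then $F_\delta$ satisfies \eqref{ass:F2} with $c_0 = \theta_c$ independent of $\delta$, while $c_1 = c_1(\delta)$ blows up as $\delta \to 0$. Since $R := \abs{\mean{\varphi_0}_\Omega} + \tfrac{\kappa}{2} < 1$, for all sufficiently small $\delta$ one has $F_\delta = F$ on $[-R,R]$ and hence $\norm{F_\delta}_{L^\infty([-R,R])} \le \norm{F}_{L^\infty([-1,1])}$.

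Proposition~\ref{PROP:REGPOT} yields a weak solution $(\varphi_\delta, \mu_\delta)$ to \eqref{AICH} with potential $F_\delta$. The weak energy dissipation law combined with \eqref{ass:A}--\eqref{ass:M} produces uniform bounds on $\varphi_\delta$ in $L^\infty(0,T;H^1(\Omega))$, on $\Grad\mu_\delta$ in $L^2(\OT)$, and (via \eqref{DEF:WS:WF1*}) on $\delt \varphi_\delta$ in $L^2(0,T;H^1(\Omega)')$. Crucially, estimate \eqref{EST:F'} in Corollary~\ref{COR:REGPOT} is independent of $c_1$ and therefore furnishes a $\delta$-independent $L^2(\OT)$-bound on $F_\delta'(\varphi_\delta)$; testing \eqref{DEF:WS:WF2*} with $\eta = 1$ controls $\mean{\mu_\delta}_\Omega$ and then, via Poincaré, yields a uniform bound on $\mu_\delta$ in $L^2(0,T;H^1(\Omega))$. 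By Aubin--Lions compactness, a subsequence satisfies $\varphi_\delta \to \varphi$ strongly in $C([0,T];L^2(\Omega))$ and pointwise a.e.\ in $\OT$, $\Grad\varphi_\delta \wsto \Grad\varphi$ in $L^\infty(0,T;L^2(\Omega))$, $\mu_\delta \wto \mu$ in $L^2(0,T;H^1(\Omega))$, $\delt \varphi_\delta \wto \delt \varphi$ in $L^2(0,T;H^1(\Omega)')$, and $F_\delta'(\varphi_\delta) \wto \xi$ in $L^2(\OT)$ for some $\xi$.

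The main obstacle is identifying the weak limits of the nonlinear terms. For $A'(\Grad\varphi_\delta)$ I would exploit the strong monotonicity in \eqref{ass:A}: testing \eqref{DEF:WS:WF2*} with $\eta = \varphi_\delta - \varphi$ and integrating in time, the left-hand side tends to zero because $\mu_\delta$ is bounded in $L^2$ while $\varphi_\delta - \varphi \to 0$ strongly in $L^2$, and the $F_\delta'$-contribution on the right-hand side vanishes for the same reason. Subtracting $\int A'(\Grad\varphi)\cdot\Grad(\varphi_\delta-\varphi)$, which tends to zero by weak convergence, and applying strong monotonicity yields
\begin{equation*}
    a_0 \int_0^T \intO \abs{\Grad\varphi_\delta - \Grad\varphi}^2 \dx\dt \le \int_0^T \intO \bigl(A'(\Grad\varphi_\delta) - A'(\Grad\varphi)\bigr) \cdot (\Grad\varphi_\delta - \Grad\varphi) \dx\dt \longrightarrow 0,
\end{equation*}
so $\Grad\varphi_\delta \to \Grad\varphi$ strongly in $L^2(\OT)$. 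Extracting a pointwise-convergent subsequence and using the continuity and boundedness of $A'$ and $M$, one passes to the limit in both nonlinear terms $A'(\Grad\varphi_\delta)$ and $M(\Grad\varphi_\delta,\varphi_\delta)\Grad\mu_\delta$ within the weak formulations \eqref{DEF:WS:WF*}.

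To identify $\xi = F'(\varphi)$ and simultaneously establish the constraint $\abs{\varphi} < 1$ a.e., I would argue via the singular character of $F_{1,\delta}'$: by monotonicity and the chosen extension, if $\abs{\varphi(x,t)} = 1$ on a set $E \subset \OT$ of positive measure, then along the pointwise-convergent subsequence $\bigabs{F_{1,\delta_n}'(\varphi_{\delta_n}(x,t))} \to +\infty$ a.e.\ on $E$, which by Fatou's lemma contradicts the uniform $L^2$-bound on $F_\delta'(\varphi_\delta) = F_{1,\delta}'(\varphi_\delta) + F_2'(\varphi_\delta)$ (noting that $F_2'(\varphi_\delta)$ is bounded in $L^2$ via the uniform $H^1$-bound). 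Hence $\abs{\varphi} < 1$ a.e., and the a.e.\ convergence $\varphi_\delta \to \varphi$, continuity of $F'$ on $(-1,1)$, and Vitali's convergence theorem (equi-integrability from the uniform $L^2$-bound) then identify $\xi = F'(\varphi)$. Passage to the limit in \eqref{DEF:WS:WF*} is now routine, while \eqref{DEF:WS:DISS*} is inherited by convexity of $A$, continuity of $F$, and Fatou's lemma applied to the dissipation term. Finally, \eqref{EST:F':LOG} is inherited directly from \eqref{EST:F'} by weak lower semicontinuity of the $L^2$-norm upon sending $\delta \to 0$.
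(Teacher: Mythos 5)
Your proposal follows essentially the same strategy as the paper's proof: a $C^2$ regularization of the logarithmic potential that preserves the convex--concave splitting, uniform bounds from Proposition~\ref{PROP:REGPOT} and Corollary~\ref{COR:REGPOT} that are independent of the upper curvature bound $c_1$, strong $L^2$-convergence of the gradients via the strong monotonicity of $A'$, identification of the singular term and of the constraint $\abs{\varphi}<1$ a.e.~through a Fatou-type blow-up argument, and weak lower semicontinuity for the dissipation inequality and for \eqref{EST:F':LOG}. The only point you leave out is the upgrade of the energy dissipation law from almost all $t$ to \emph{all} $t\in[0,T]$, as required by Definition~\ref{DEF:WS*}, which the paper obtains from the lower semicontinuity of $t\mapsto E(\varphi(t))$.
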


The proof of this theorem can be found in Section~\ref{SECT:EXLOG}.

\begin{remark}
    The idea behind the proof of Theorem~\ref{THM:LOGPOT} is to construct a weak solution as the limit of a sequence of approximate solutions $(\varphi_n,\mu_n)$ where the logarithmic potential is approximated by a sequence $(G_n)_{n\in\N}$ of smooth potentials satisfying the assumption \eqref{ass:F2}. This means that Corollary~\ref{COR:REGPOT} can be applied to derive a suitable uniform bound on $G_n'(\varphi_n)$, which is crucial to pass to the limit in the weak formulation.

    We point out that this strategy could also be applied to construct weak solutions of the anisotropic Cahn--Hilliard equation \eqref{AICH} with regular potentials that do not satisfy the growth condition imposed in \eqref{ass:F1} or other types of singular potentials. For instance, the existence of a weak solution of system \eqref{AICH} with $F$ being the double-obstacle potential (see \eqref{DEF:F:DOB}) was established in \cite[Theorem~4.7]{GKNZ}.
\end{remark}

\subsection{Uniqueness of the weak solution in the case of constant mobility}

In case that the mobility function $M$ is positive and constant, we can even show that the weak solution given by Theorem~\ref{THM:REG} or Theorem~\ref{THM:LOGPOT}, respectively, is unique. However, if $F$ is a regular potential satisfying \eqref{ass:F1}, we need the following additional assumption:

\begin{enumerate}[label=$(\mathbf{F \arabic*})$, ref = $\mathbf{F \arabic*}$, start=3]
	\item \label{ass:F3} 
    The potential $F:\R\to\R$ can be decomposed as follows.
    There exist functions $F_1,F_2\in C^2(\R;\R)$ such that:
    \begin{enumerate}[label=\textnormal{(\roman*)}]
        \item $F(s) = F_1(s) + F_2(s)$ for all $s\in\R$.
        \item $F_1$ is convex and thus, $F_1'$ is monotonically increasing.
        \item $F_2'$ is Lipschitz continuous and thus, $F_2$ has at most quadratic growth.
    \end{enumerate}
\end{enumerate}

An analogous decomposition of the logarithmic potential is given by \eqref{DEC:LOG}. The uniqueness result reads as follows.

\begin{theorem} \label{THM:UNIQ}
    In addition to the assumptions \eqref{ass:dom}--\eqref{ass:M}, we assume that the mobility function $M$ is constant.
    Let $\varphi_0 \in H^1(\Omega)$ be arbitrary, and let $F$ be either a regular potential satisfying \eqref{ass:F1} and \eqref{ass:F3} or the logarithmic potential. 
    If $F$ is the logarithmic potential, we additionally assume that $\abs{\varphi_0}\le 1$ a.e.~in $\Omega$ as well as $\abs{\mean{\varphi_0}}<1$. 
    Then, the weak solution $(\varphi,\mu)$ given by Theorem~\ref{THM:REG} or Theorem~\ref{THM:LOGPOT}, respectively, is unique.
\end{theorem}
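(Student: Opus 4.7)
The plan is to run the standard $H^{-1}$-duality argument for Cahn--Hilliard type equations. The essential structural ingredients are the strong monotonicity of $A'$ from \eqref{ass:A} and the splitting of $F$ into a convex part and a part with Lipschitz derivative, which is provided by \eqref{ass:F3} in the regular case and by the decomposition \eqref{DEC:LOG} in the logarithmic case. Let $(\varphi_1,\mu_1)$ and $(\varphi_2,\mu_2)$ be two weak solutions with the same initial datum $\varphi_0$, and set $\bar\varphi:=\varphi_1-\varphi_2$, $\bar\mu:=\mu_1-\mu_2$. Testing \eqref{DEF:WS:WF1} (respectively \eqref{DEF:WS:WF1*}) with $\zeta\equiv 1$ shows that each $\mean{\varphi_i(t)}_\Omega$ equals $\mean{\varphi_0}_\Omega$, so $\mean{\bar\varphi(t)}_\Omega=0$ for all $t\in[0,T]$. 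In particular $\bar\varphi(t)\in H^1_{(0)}(\Omega)\subset H^{-1}_{(0)}(\Omega)$, and $\PN\bar\varphi(t)$ is well-defined and lies in $H^1(\Omega)$.

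Next I would test the difference of the first weak formulations with $\zeta=\PN\bar\varphi$. Using the chain-rule identity $\ang{\delt\bar\varphi}{\PN\bar\varphi}=\tfrac12\ddt\norm{\bar\varphi}_{-1}^2$ together with the integration-by-parts identity $\int_\Omega\Grad\bar\mu\cdot\Grad\PN\bar\varphi\dx=\int_\Omega\bar\mu\,\bar\varphi\dx$ (which follows from the Neumann condition on $\PN\bar\varphi$ and the fact that $\mean{\bar\varphi}_\Omega=0$), and exploiting that $M$ is constant, one arrives at $\tfrac12\ddt\norm{\bar\varphi}_{-1}^2+M\int_\Omega\bar\mu\,\bar\varphi\dx=0$. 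Testing the difference of the second weak formulations with $\eta=\bar\varphi$ and decomposing $F=F_1+F_2$ as described above, one rewrites the coupling term as
\begin{align*}
    \intO\bar\mu\,\bar\varphi\dx
    &= \intO\bigl(A'(\Grad\varphi_1)-A'(\Grad\varphi_2)\bigr)\cdot\Grad\bar\varphi\dx \\
    &\quad + \intO\bigl(F_1'(\varphi_1)-F_1'(\varphi_2)\bigr)\bar\varphi\dx
    + \intO\bigl(F_2'(\varphi_1)-F_2'(\varphi_2)\bigr)\bar\varphi\dx.
\end{align*}
Strong monotonicity of $A'$ bounds the first summand from below by $a_0\norm{\Grad\bar\varphi}_{L^2(\Omega)}^2$; monotonicity of $F_1'$ (inherited from the convexity of $F_1$) shows that the second integrand is pointwise non-negative, which is meaningful since $F_1'(\varphi_i)\in L^2(\Omega)$ by Definition~\ref{DEF:WS} (respectively Definition~\ref{DEF:WS*}); the Lipschitz constant $L$ of $F_2'$ dominates the last term in absolute value by $L\norm{\bar\varphi}_{L^2(\Omega)}^2$.

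Combining these estimates yields
\begin{align*}
    \tfrac12\ddt\norm{\bar\varphi}_{-1}^2 + Ma_0\norm{\Grad\bar\varphi}_{L^2(\Omega)}^2 \le ML\norm{\bar\varphi}_{L^2(\Omega)}^2,
\end{align*}
and the interpolation $\norm{u}_{L^2(\Omega)}^2\le\norm{u}_{-1}\,\norm{\Grad u}_{L^2(\Omega)}$ valid on $H^1_{(0)}(\Omega)$, combined with Young's inequality, allows me to absorb the right-hand side into the gradient term at the cost of a multiple of $\norm{\bar\varphi}_{-1}^2$. Gr\"onwall's lemma together with $\bar\varphi(0)=0$ then forces $\bar\varphi\equiv 0$, whence $\varphi_1=\varphi_2$; substituting this back into the difference of the second weak formulations and choosing $\eta=\bar\mu\in H^1(\Omega)$ gives $\norm{\bar\mu}_{L^2(\Omega)}^2=0$, so $\mu_1=\mu_2$ as well. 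The main technical point is ensuring that all potential-related integrals (including the one involving the singular $F_1'$ in the logarithmic case) are well-defined and that the convex/Lipschitz splitting can actually be applied; this is guaranteed by the $L^2$-regularity of $F'(\varphi)$ that is explicitly built into the notion of weak solution. The constancy of $M$ is also essential here: a genuinely anisotropic or concentration-dependent mobility would give rise to an additional term $\bigl(M(\Grad\varphi_1,\varphi_1)-M(\Grad\varphi_2,\varphi_2)\bigr)\Grad\mu_2$ that cannot be controlled in this $H^{-1}$-setting.
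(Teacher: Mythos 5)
Your proposal is correct and follows essentially the same route as the paper: the standard $H^{-1}$-duality argument, using the strong monotonicity of $A'$, the convex/Lipschitz splitting $F=F_1+F_2$, the interpolation $\norm{u}_{L^2(\Omega)}^2\le\norm{u}_{-1}\norm{\Grad u}_{L^2(\Omega)}$ on mean-free functions, and Gr\"onwall. The only cosmetic difference is that you phrase the key identity in differential form via $\ang{\delt\ov\varphi}{\PN\ov\varphi}=\tfrac12\ddt\norm{\ov\varphi}_{-1}^2$, whereas the paper states the time-integrated version $\int_{\Ot}\ov\varphi\,\ov\mu\dsx=-\tfrac12\norm{\ov\varphi(t)}_{-1}^2$ directly.
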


The proof of this theorem is given in Section~\ref{SECT:UNIQ}.

\subsection{Regularity of weak solutions}

By means of regularity theory for quasilinear elliptic PDEs, which will be discussed in Section~\ref{SECT:REG:QLE}, we are able to increase the spatial regularity of $\varphi$ under suitable additional regularity assumptions on $A$ and $\Omega$.

\begin{theorem} \label{THM:REG}
    In addition to the assumptions \eqref{ass:dom}--\eqref{ass:M}, we assume that $\Omega$ is of class $C^{1,1}$ and that $A\in C^{1,1}(\R^d)$.  
    Let $\varphi_0 \in H^1(\Omega)$ be arbitrary, and let $F$ be either a regular potential satisfying \eqref{ass:F1} or the logarithmic potential. 
    If $F$ is the logarithmic potential, we additionally assume that $\abs{\varphi_0}\le 1$ a.e.~in $\Omega$ as well as $\abs{\mean{\varphi_0}}<1$. 
    Moreover, let $(\varphi,\mu)$ be a corresponding weak solution in the sense of Definition~\ref{DEF:WS} or Definition~\ref{DEF:WS*}, respectively. Then, it additionally holds 
    \begin{equation}
    \label{REG:PHIA}
        \varphi \in L^2\big(0,T;H^2(\Omega)\big),
        \qquad 
        A'\big(\Grad\varphi(t)\big) \in H^1(\Omega) \quad\text{for almost all $t\in[0,T]$},
    \end{equation}
    and
    \begin{equation}
    \label{EQ:MU:PTW}
        \mu = -\Grad\cdot A'(\Grad\varphi) + F'(\varphi) 
        \quad\text{a.e.~in $\Omega_T$.}
    \end{equation}
\end{theorem}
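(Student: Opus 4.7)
The plan is to freeze the time variable and, for almost every $t\in[0,T]$, interpret the weak formulation \eqref{DEF:WS:WF2} (resp.~\eqref{DEF:WS:WF2*}) as the weak form of a quasilinear elliptic Neumann problem for the spatial profile $\varphi(t)$. Precisely, testing $\eta\in H^1(\Omega)$ and writing $f(t):=\mu(t)-F'(\varphi(t))$, the identity
\begin{equation*}
    \intO A'(\Grad\varphi(t))\cdot\Grad\eta\dx = \intO f(t)\,\eta\dx
\end{equation*}
says that $\varphi(t)$ is a weak solution of $-\Grad\cdot A'(\Grad\varphi(t))=f(t)$ in $\Omega$ together with the natural Neumann condition $A'(\Grad\varphi(t))\cdot\n=0$ on $\Gamma$. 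By the regularity of the weak solution, $\mu(t)\in H^1(\Omega)$ and $F'(\varphi(t))\in L^2(\Omega)$ for almost every $t$, so $f(t)\in L^2(\Omega)$ a.e., and moreover $f\in L^2(0,T;L^2(\Omega))$ with a bound controlled by the energy and by the quantitative estimates for $F'(\varphi)$ (Corollary~\ref{COR:REGPOT} in the regular case, \eqref{EST:F':LOG} in the logarithmic case).

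With this reformulation, I would invoke the regularity results for quasilinear second-order elliptic problems developed in Section~\ref{SECT:REG:QLE}. Under the hypotheses $A\in C^{1,1}(\R^d)$, $\Omega\in C^{1,1}$, together with the coercivity and strong monotonicity of $A'$ coming from \eqref{ass:A}, those results yield, for a.e.~$t\in[0,T]$, the membership $A'(\Grad\varphi(t))\in H^1(\Omega)$ and $\varphi(t)\in H^2(\Omega)$, along with a quantitative estimate of the form
\begin{equation*}
    \norm{\varphi(t)}_{H^2(\Omega)}^2 \le C\bigl(1+\norm{f(t)}_{L^2(\Omega)}^2+\norm{\varphi(t)}_{H^1(\Omega)}^2\bigr).
\end{equation*}
Integrating this bound over $[0,T]$ and using $\varphi\in L^\infty(0,T;H^1(\Omega))$ together with $f\in L^2(0,T;L^2(\Omega))$ yields the first regularity in \eqref{REG:PHIA}, namely $\varphi\in L^2(0,T;H^2(\Omega))$. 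The second statement in \eqref{REG:PHIA} is part of the output of the elliptic regularity argument.

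Once $A'(\Grad\varphi(t))\in H^1(\Omega)$ is established, the weak identity \eqref{DEF:WS:WF2} can be integrated by parts on the right-hand side, and since the identity holds for all test functions $\eta\in H^1(\Omega)$, a standard density argument together with the natural boundary condition entails the pointwise identity \eqref{EQ:MU:PTW} a.e.~in $\Omega_T$.

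The main obstacle is the application of the elliptic regularity theory rather than its consequences. Because $A$ is positively two-homogeneous, it cannot be twice differentiable at the origin, so classical $W^{2,p}$-regularity theory for quasilinear equations is not directly available; only the $C^{1,1}$-regularity of $A$ and the strong monotonicity of $A'$ are at our disposal. The delicate point is therefore the difference-quotient analysis (both in the interior and tangentially at the boundary, using the $C^{1,1}$-chart of $\Omega$) that provides control of $\Grad A'(\Grad\varphi)$, and, via strong monotonicity, of the second derivatives of $\varphi$. This technical core is precisely what Section~\ref{SECT:REG:QLE} is designed to supply, and the remainder of the argument here is essentially a clean application of those results followed by integration in time.
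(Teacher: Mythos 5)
Your proposal follows essentially the same route as the paper: freeze time, view \eqref{DEF:WS:WF2} (resp.~\eqref{DEF:WS:WF2*}) as the weak form of the quasilinear Neumann problem $-\Grad\cdot A'(\Grad\varphi(t))=\mu(t)-F'(\varphi(t))$ with $L^2$ right-hand side, apply Theorem~\ref{regbdr:THRM:reg:bdr} with $a=A'$, integrate the resulting $H^2$-estimate in time, and recover \eqref{EQ:MU:PTW} by integration by parts against $C_c^\infty(\Omega)$ test functions. The only cosmetic difference is that the paper obtains $A'(\Grad\varphi(t))\in H^1(\Omega)$ not as a direct output of the boundary-regularity theorem but via the chain rule for compositions of Lipschitz maps with Sobolev functions (using $A'(\zero)=\zero$ and $\Grad\varphi(t)\in H^1(\Omega)$), a step worth stating explicitly.
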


\medskip

The proof is presented in Section~\ref{SECT:REG:AICH}.

If the mobility function $M$ is constant, we even obtain further regularity for $\delt\varphi$, $\mu$ and $F'(\varphi)$ as well as a \textit{strict separation property} for the phase-field at almost all times. 

\medskip

\begin{theorem} \label{THM:REG:2}
    In addition to the assumptions \eqref{ass:dom}--\eqref{ass:M}, we assume that $\Omega$ is of class $C^{1,1}$, that $A\in C^{1,1}(\R^d)$ and that the mobility function $M$ is constant. 
    Let $\varphi_0 \in H^1(\Omega)$ be arbitrary, and let $F$ be either a regular potential satisfying \eqref{ass:F1} with $p\le 4$ if $d=3$ and \eqref{ass:F3} or the logarithmic potential. 
    If $F$ is the logarithmic potential, we additionally assume that $\abs{\varphi_0}\le 1$ a.e.~in $\Omega$ as well as $\abs{\mean{\varphi_0}}<1$. 
    We further assume: \vspace{-1ex}
    \begin{enumerate}[label=$(\mathbf{I})$, ref = $\mathbf{I}$, ]
        \item \label{ASS:MU} There exists $\mu_0\in H^1(\Omega)$ such that for all $\eta\in H^1(\Omega)$, it holds:
        \begin{align*}
        \intO \mu_0 \eta \dx
        = \intO \ep A'(\Grad\varphi_0) \cdot \Grad \eta
            + \frac {1}{\ep} F'(\varphi_0) \eta \dx.
        \end{align*}
    \end{enumerate}
    Let $(\varphi,\mu)$ be the corresponding unique weak solution in the sense of Definition~\ref{DEF:WS} or Definition~\ref{DEF:WS*}, respectively.    
    Then, $(\varphi,\mu)$ has the following additional regularity: 
    \begin{align*}
        \varphi &\in H^1\big(0,T;H^1(\Omega)\big) \cap \revised{L^\infty\big(0,T;H^2(\Omega)\big)
        \cap C(\ov{\Omega_T})},
        \\
        \mu &\in L^\infty\big(0,T;H^1(\Omega)\big) \cap L^2\big(0,T;H^2(\Omega)\big),
        \\
        F'(\varphi) &\in L^\infty\big(0,T;L^2(\Omega)\big) \cap L^2\big(0,T;L^\infty(\Omega)\big).
    \end{align*}
    Moreover, if $F$ is the logarithmic potential, then for almost all $t\in[0,T]$, there exists $\delta(t) \in (0,1]$ such that
    \begin{equation}
    \label{EST:SEPPROP}
        \norm{\varphi(t)}_{L^\infty(\Omega)} \le 1 - \delta(t).
    \end{equation}
    This means that $\varphi(t)$ is strictly separated from $\pm 1$ for almost all times $t\in[0,T]$. 
    
    \revised{In the case $d=2$, the strict separation property even holds uniformly in time. This means that there exists $\delta^*>0$ such that for all $(t,\x)\in \Omega_T$,}
    \begin{equation}
    \label{EST:SEPPROP:UNI}
        \revised{\abs{\varphi(t,\x)} \le 1 - \delta^*\,.}
    \end{equation}
\end{theorem}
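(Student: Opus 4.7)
I would proceed by three successive regularity upgrades and then read off the separation property. Throughout I set $M=1$ for brevity and use that Theorem~\ref{THM:REG} already provides $\varphi\in L^2(0,T;H^2(\Omega))$, the pointwise identity $\mu = -\Grad\cdot A'(\Grad\varphi) + F'(\varphi)$, and the weak form $\delt\varphi = \Lap\mu$. The key (formal) computation is: differentiating the second equation in time and testing \eqref{DEF:WS:WF1*} with $\delt\mu$ produces
\begin{equation*}
\tfrac{1}{2}\tfrac{d}{dt}\norm{\Grad\mu}_{L^2}^2 + \intO A''(\Grad\varphi)\Grad\delt\varphi \cdot \Grad\delt\varphi \dx + \intO F''(\varphi)\abs{\delt\varphi}^2 \dx = 0.
\end{equation*}
By the strong monotonicity of $A'$ (equivalently the strong convexity of $A$) one has $A''(\Grad\varphi)\xi\cdot\xi\ge a_0\abs{\xi}^2$ a.e., while in the logarithmic case $F_1''\ge 0$ and $F_2''\equiv -\theta_c$ (the regular case is analogous via \eqref{ass:F3}). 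The sign-indefinite $F''$-term is absorbed using the zero-mean property of $\delt\varphi$, the interpolation $\norm{\delt\varphi}_{L^2}^2\le C\norm{\Grad\delt\varphi}_{L^2}\norm{\delt\varphi}_{-1}$, and the identity $\norm{\delt\varphi}_{-1}=\norm{\Grad\mu}_{L^2}$ provided by \eqref{DEF:WS:WF1*}. A Gronwall argument, with the initial value $\norm{\Grad\mu(0)}_{L^2}$ finite thanks to assumption~\eqref{ASS:MU}, then delivers $\Grad\mu\in L^\infty(0,T;L^2(\Omega))$ and $\delt\varphi\in L^2(0,T;H^1(\Omega))$. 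To make this rigorous despite $A\in C^{1,1}$, I would replace $A$ by a mollification $A_\delta\in C^2$ satisfying the same ellipticity constants, carry out the estimate at the $\delta$-level, and pass to the limit using the strong monotonicity of $A'$.

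\textbf{Elliptic bootstrap.} Testing \eqref{DEF:WS:WF2*} with $\eta\equiv 1$ gives $\mean{\mu}_\Omega=\mean{F'(\varphi)}_\Omega$; combined with the previous step and \eqref{EST:F':LOG} applied pointwise in $t$, this yields $\mu\in L^\infty(0,T;H^1(\Omega))$ and $F'(\varphi)\in L^\infty(0,T;L^2(\Omega))$. Reading the second equation as the Neumann problem $-\Grad\cdot A'(\Grad\varphi(t))=\mu(t)-F'(\varphi(t))$ with right-hand side uniformly in $L^2(\Omega)$, the quasilinear elliptic regularity of Section~\ref{SECT:REG:QLE} produces $\varphi\in L^\infty(0,T;H^2(\Omega))$. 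Linear Neumann theory applied to $-\Lap\mu=-\delt\varphi\in L^2(0,T;L^2(\Omega))$ then gives $\mu\in L^2(0,T;H^2(\Omega))$, and an Aubin--Lions interpolation $L^\infty(H^2)\cap H^1(H^1)\hookrightarrow C([0,T];H^{2-\sigma})\hookrightarrow C(\overline{\Omega_T})$ (valid for $d\in\{2,3\}$) furnishes the continuity of $\varphi$.

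\textbf{Separation via Moser iteration.} For the a.e.-$t$ separation I would view the second equation pointwise in $t$ as $-\Grad\cdot A'(\Grad\varphi)+F_1'(\varphi)=\mu-F_2'(\varphi)$ and test with a truncation of $\psi_k=\abs{F_1'(\varphi)}^{2k-2}F_1'(\varphi)$. Since $A$ is positively two-homogeneous, Euler's identity gives $A'(\p)\cdot\p=2A(\p)\ge 0$, so that combined with $F_1''\ge 0$ the anisotropic contribution
\begin{equation*}
(2k-1)\intO \abs{F_1'(\varphi)}^{2k-2} F_1''(\varphi)\,A'(\Grad\varphi)\cdot\Grad\varphi \dx
\end{equation*}
is non-negative and may be dropped. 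Hölder's inequality on the remaining terms then yields $\norm{F_1'(\varphi(t))}_{L^{2k}}\le \norm{\mu(t)-F_2'(\varphi(t))}_{L^{2k}}$ for every $k\ge 1$, and passing to $k\to\infty$ produces $\norm{F_1'(\varphi(t))}_{L^\infty}\le \norm{\mu(t)}_{L^\infty}+\theta_c$ for a.e.~$t$. Since $\mu\in L^2(H^2)\hookrightarrow L^2(L^\infty)$ in $d\le 3$, this gives $F'(\varphi)\in L^2(0,T;L^\infty(\Omega))$ and, through the logarithmic form of $F_1'$, the separation \eqref{EST:SEPPROP}. For \eqref{EST:SEPPROP:UNI} in $d=2$, where $H^1\not\hookrightarrow L^\infty$, I would refine the iteration by tracking the sharp Sobolev constants $\norm{u}_{L^{2k}}\le C\sqrt{2k}\norm{u}_{H^1}$ and combining them with a Trudinger--Moser exponential integrability $\sup_t\intO e^{\alpha\abs{\mu(t)}^2}\dx<\infty$ coming from $\mu\in L^\infty(H^1)$, which controls $\norm{F_1'(\varphi(t))}_{L^\infty}$ uniformly in $t$.

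\textbf{Main obstacles.} I expect two genuine difficulties. First, the rigorous justification of the time-differentiation identity in view of $A\in C^{1,1}$ requires a careful regularisation and limit passage that keeps the ellipticity constants uniform in the regularisation parameter. Second, and more substantially, the uniform separation in $d=2$ requires adapting the classical isotropic Trudinger--Moser argument to the quasilinear anisotropic setting; what makes this still tractable is that the non-negativity of the anisotropic term, a consequence of Euler's identity for two-homogeneous functions, is preserved in the refined iteration.
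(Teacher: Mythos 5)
Your architecture coincides with the paper's: a time-differentiated energy identity giving $\Grad\mu\in L^\infty(L^2)$ and $\delt\varphi\in L^2(H^1)$, an elliptic bootstrap for the $H^2$-regularities and continuity, a Moser-type iteration on $F_1'(\varphi)$ for the $L^2(L^\infty)$ bound and the a.e.-in-time separation, and a Trudinger--Moser refinement in $d=2$. Two of your steps, however, have genuine gaps.

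First, your justification of the energy identity by mollifying $A$ does not work as stated. The obstruction to the formal computation is not only the missing $A''$ but, more fundamentally, the lack of a priori time regularity of $\varphi$ and $\mu$: even with $A_\delta\in C^2$ you may not test with $\delt\mu$ or differentiate $A_\delta'(\Grad\varphi)$ in time, and the given weak solution solves the equation with the original $A$, not with $A_\delta$, so ``carrying out the estimate at the $\delta$-level'' would force you to re-solve the regularised problem and identify limits. The paper instead runs the whole estimate on backward difference quotients in time (extending $(\varphi,\mu)$ by $(\varphi_0,\mu_0)$ for $t\le 0$, which is where assumption \eqref{ASS:MU} enters); this needs only the strong monotonicity of $A'$ and the monotonicity of $F_1'$ plus the Lipschitz continuity of $F_2'$, and never touches $A''$. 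Your absorption of the $F''$-term via $\norm{\delt\varphi}_{L^2}^2\le C\norm{\Grad\delt\varphi}_{L^2}\norm{\delt\varphi}_{-1}$ and $\norm{\delt\varphi}_{-1}=\norm{\Grad\mu}_{L^2}$ is fine and mirrors the paper's estimate at the discrete level.

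Second, in the $d=2$ uniform separation you assert that the $\sqrt{2k}$-growth of the Sobolev constants together with exponential integrability of $\mu$ ``controls $\norm{F_1'(\varphi(t))}_{L^\infty}$ uniformly in $t$.'' Exponential integrability (equivalently, $L^q$-bounds growing like $\sqrt q$) does \emph{not} yield an $L^\infty$ bound; a further ingredient is indispensable. The paper first obtains $\norm{F_1'(\varphi(t))}_{L^q(\Omega)}\le C\sqrt q$ uniformly in $t$, converts this into $\intO \exp\big(\tfrac{2m}{\theta}\abs{F'(\varphi(t))}\big)\dx\le C(m)$ as in Gal--Poiatti, and then exploits the structural inequality $\abs{F_1''(s)}\le\theta\,\mathrm e^{\frac2\theta\abs{F'(s)}}$ to deduce $F_1''(\varphi)\in L^\infty(0,T;L^6(\Omega))$. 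Combined with $\varphi\in L^\infty(0,T;H^2(\Omega))\emb L^\infty(0,T;W^{1,6}(\Omega))$ and the chain rule, this gives $\Grad F_1'(\varphi)\in L^\infty(0,T;L^3(\Omega))$, and only the embedding $W^{1,3}(\Omega)\emb C(\ov\Omega)$ in two dimensions produces the uniform $L^\infty$ bound on $F_1'(\varphi)$ from which $\delta^*$ is read off. Without this gradient step your argument stalls at exponential integrability. The remaining parts of your proposal (elliptic bootstrap, the non-negativity of the anisotropic term via Euler's identity or monotonicity together with $F_1''\ge0$, and the passage $q\to\infty$ for the a.e.-$t$ separation) match the paper's Steps 2--5.
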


For the proof of this theorem, we also refer to Section~\ref{SECT:REG:AICH}.

\medskip

\begin{remark} \label{REM:REG:1}
    Suppose that the assumptions of Theorem~\ref{THM:REG:2} hold. \vspace{-1ex}
    \begin{enumerate}[label=\textnormal{(\alph*)}, ref = \textnormal{(\alph*)}]
        \item If the domain $\Omega$ is of class $C^{2,1}$,
        we even obtain $\mu \in L^2(0,T;H^3(\Omega))$ by means of classical elliptic regularity theory for the Poisson--Neumann problem (see, e.g., \cite[Section~4]{McLean}).
        \item Even without the additional regularity assumption \eqref{ASS:MU}, it would still be possible to obtain the regularities of Theorem~\ref{THM:REG} as well as statement~(a) of this remark, but with the interval $(0,T)$ in the function spaces replaced by $(t_0,T)$ for any time $t_0>0$. 
        \item \revised{We point out that assumption \eqref{ASS:MU} already implies that $\abs{\varphi_0} < 1$ holds a.e.~in $\Omega$.
        In the case $d=2$, it even holds $\norm{\varphi_0}_{L^\infty(\Omega)}<1$, i.e., $\varphi_0$ is strictly separated from $\pm 1$.
        This can be shown directly by proceeding similarly as in Step~4 and Step~5 of the proof of Theorem~\ref{THM:REG:2}. 
        Alternatively, the assertion for $d=2$ also follows from the uniform separation property \eqref{EST:SEPPROP:UNI}.}
    \end{enumerate}
\end{remark}


\section{Existence of weak solutions of the anisotropic Cahn--Hilliard equation with logarithmic potential} 
\label{SECT:EXLOG}

In this section, we prove the existence of a weak solution to the anisotropic Cahn--Hilliard equation \eqref{AICH}, where $F$ is chosen as the logarithmic potential (see \eqref{DEF:F:LOG}).

\begin{proof}[Proof of Theorem~\ref{THM:LOGPOT}]
    We split the proof into three steps.

    \textit{Step~1: Approximation of the logarithmic potential by smooth potentials.}
    The idea of this proof is to approximate the logarithmic potential $F$ by a suitable sequence $(G_n)_{n\in\N}$ of smooth potentials. 
    
    Therefore, for any $n\in\N$, we approximate the function 
    $$H: (0,\infty) \to \R,\quad H(s) = s\,\ln(s)$$
    by its second-order Taylor polynomial at the point $\frac 1n$, which is given by
    \begin{align*}
        H_n(s) &:= H\big(\tfrac 1n\big) + H'\big(\tfrac 1n\big)\left(s-\tfrac 1n\right) + \frac{1}{2} H''\big(\tfrac 1n\big)\left(s-\tfrac 1n\right)^2
        \\
        &\phantom{:}= - \tfrac{1}{2n} - s\ln(n) + \tfrac n2 s^2
    \end{align*}
    for all $s\in \R$. For all $s\in\R$ and $n\in\N$, we now define   
    \begin{align*}
		F_{1,n}(s) &:= \frac{\theta}{2}
		\begin{cases}
			(1-s)\ln(1-s) + H_n(1+s) &\text{if } s< -1+\frac{1}{n},\\
			(1-s)\ln(1-s) + (1+s)\ln(1+s) &\text{if } |s| \leq 1-\frac{1}{n},\\
			H_n(1-s) + (1+s)\ln(1+s) &\text{if } s> 1-\frac{1}{n},
		\end{cases}
        \\[1ex]
        G_n(s) &:= F_{1,n}(s) + F_2(s) = F_{1,n}(s) + \frac{\theta_c}{2}(1-s^2).
	\end{align*}
    It is straightforward to check that for all $n\in\N$, $F_{1,n} \in C^2(\R)$ with
    \begin{align*}
		F_{1,n}'(s) &= \frac{\theta}{2}
		\begin{cases}
			-\ln(1-s) + n(s+1) - 1 - \ln(n) &\quad\text{if } s< -1+\frac{1}{n},\\
			\ln(1+s) -\ln(1-s) &\quad\text{if } |s| \leq 1-\frac{1}{n},\\
			\ln(1+s) + n(s-1)  + 1 + \ln(n)  &\quad\text{if } s> 1-\frac{1}{n},
		\end{cases}
        \\[1ex]
        F_{1,n}''(s) &= \frac{\theta}{2}
		\begin{cases}
			(1-s)^{-1} + n &\quad\text{if } s< -1+\frac{1}{n},\\
			2(1-s^2)^{-1} &\quad\text{if } |s| \leq 1-\frac{1}{n},\\
			(1+s)^{-1} + n &\quad\text{if } s> 1-\frac{1}{n},
		\end{cases}
	\end{align*}
    for all $s\in\R$. For every $n\in\N$, we further introduce the approximate energy functional
    \begin{align}\label{DEF:EN}
        E_n:H^1(\Omega) \to \R,\quad
        E_n(\varphi) := \int_{\Omega} A(\nabla\varphi) 
            + G_n(\varphi) \dx.
    \end{align}   
    Defining 
    \begin{align}
        \label{DEF:N}
        N:= \min\left\{ m\in\N : m>\exp\left(\frac{2\theta_c}{\theta}\right) \right\},
    \end{align}
    we now show that the sequence $(G_n)_{n\in\N}$ has the following properties:%
    \begin{enumerate}[label=(\roman*)]
        \item For all $n\in\N$ and $s\in (-1,1)$, it holds $G_n(s) \le F(s)$.
        \item For all $n\in\N$ with $n \ge N$, $G_n$ satisfies \eqref{ass:F2} with $c_0 = \theta_c$ and $c_1 = \theta n - \theta_c$.
        \item $G_n \to F$ in $C^2_\mathrm{loc}((-1,1))$ as $n\to\infty$.
    \end{enumerate}
    
    To prove (i), we fix an arbitrary $n\in\N$. Let us first consider the case $s\in \big(1-\frac 1n,1\big)$.
    Recalling that $H''$ is monotonically decreasing, we use Taylor's formula to obtain
    \begin{align*}
        F(s) - G_n(s) 
        &= \tfrac{1}{2}\theta\big( H(1-s) - H_n(1-s) \big) 
        \\
        &= \tfrac{1}{4}\theta \big( H''(1-\xi) - H''(1-\tfrac 1n) \big) \big(s-\tfrac 1n\big)^2
        \ge 0
    \end{align*}
    for some $\xi \in \big[0,\tfrac 1n\big]$. Hence $F\ge G_n$ on $\big(1-\frac 1n,1\big)$. Proceeding similarly, we show that $F\ge G_n$ on the interval $\big(-1,-1+\frac 1n\big)$. As obviously $F=G_n$ on $\big[-1+\frac 1n,1-\frac 1n\big]$, this verifies (i).

    We further notice that $F_{1,n}''>0$ and thus, $F_{1,n}$ is strictly convex. Since $F_{1,n}$ is continuous, it attains its minimum on the compact interval $[-1,1]$. As $F_{1,n}'(0)=0$, we infer from the strict convexity of $F_{1,n}$ that this minimum is attained at $s=0$.
    We thus have
    \begin{align*}
        G_n(s) \ge F_{1,n}(s) \ge F_{1,n}(0) = 0 
        \quad\text{for all $s\in [-1,1]$}.
    \end{align*}
    For $N$ as defined in \eqref{DEF:N}, we now consider $n \ge N$ which implies $\tfrac 12 \theta n > \theta_c$. Then, for all $s\ge 1$, we have
    \begin{align*}
        G_n'(s) &= \tfrac 12 \theta \big[ \ln(1+s) + n(s-1) + 1 + \ln(n) \big] - \theta_c s
        \\
        & = \tfrac 12 \theta \big[ \ln(1+s) - n + 1 + \ln(n) \big] + \big[\tfrac 12 \theta n - \theta_c\big] s
        \\
        &\ge \tfrac 12 \theta \big[ \ln(n) - n \big] + \big[\tfrac 12 \theta n - \theta_c\big] 
        \\
        &= \tfrac 12 \theta \ln(n) - \theta_c \;> 0.
    \end{align*}
    In a similar fashion, we show that $G_n'(s) < 0$ for all $s\le 1$. In summary, we conclude that $G_n$ is non-negative for all $n \ge N$. 
    Moreover, it is straightforward to check that 
    \begin{align*}
        - \theta_c \le G_n''(s) \le \theta n - \theta_c
    \end{align*}
    for all $s\in \R$ and $n\in\N$. This means that (ii) is established.

    Let now $K$ be an arbitrary compact subset of $(-1,1)$. Then, there exists $n_K\in\N$ such that for all $n\in\N$ with $n\ge n_K$, it holds $K\subset \big(-1+\tfrac 1n,1-\tfrac 1n\big)$. This implies $G_n = F$ on $K$, which proves (iii).

    \textit{Step~2: A priori estimates for the sequence of approximate solutions.}
    For every $n\in\N$ with $n\ge N$, according to 
    Proposition~\ref{PROP:REGPOT}, there exists a weak solution $(\varphi_n,\mu_n)$ to the potential $G_n$ and the initial datum $\varphi_0$ in the sense of Definition~\ref{DEF:WS}. In the remainder of this proof, the letter $C$ will denote generic positive constants depending only on $F$, $\varphi_0$ and the quantities introduced in \eqref{ass:dom}--\eqref{ass:M}, but not on the approximation index $n$. 

    In the following, let $n\in\N$ with $n\ge N$ be arbitrary. We now recall that the weak solution $(\varphi_n,\mu_n)$ satisfies the weak energy dissipation law (cf.~Definition~\ref{DEF:WS}\ref{DEF:WS:ENERGY}) written for the energy functional $E_n$ that was introduced in \eqref{DEF:EN}. Invoking the properties (i) and (ii) from Step~1, we thus derive the estimate 
    \begin{align}
    \label{EST:EN}
        &\frac 12 \intO \abs{\Grad\varphi_n(t)}^2 \dx 
        + \frac 12 M_0 \int_0^t  \intO \abs{\Grad\mu_n(s)}^2 \dx\ds 
        \notag\\
        &\quad
        \le E_n\big(\varphi_n(t)\big) 
            + \frac 12  \int_0^t \intO M\big(\Grad\varphi_n(s),\varphi_n(s)\big) \abs{\Grad\mu_n(s)}^2 \dx\ds 
        \notag\\[1ex]
        &\quad 
        \le E_n(\varphi_0) \le E(\varphi_0) \le C
    \end{align}
    for almost all $t\in[0,T]$. Testing the weak formulation \eqref{DEF:WS:WF1} written for $(\varphi_n,\mu_n)$ with $\zeta\equiv 1$, we infer that $\mean{\varphi_n(t)} = \mean{\varphi_0}$ for almost all $t\in[0,T]$.
    Hence, employing Poincar\'e's inequality, we conclude
    \begin{align}
    \label{EST:PHIMUN}
        \norm{\varphi_n}_{L^\infty(0,T;H^1(\Omega))} + \norm{\Grad\mu_n}_{L^2(0,T;L^2(\Omega))} \le C.
    \end{align} 
    From the weak formulation \eqref{DEF:WS:WF2} written for $(\varphi_n,\mu_n)$, we directly infer
    \begin{align*}
        \norm{\delt\varphi_n}_{L^2(0,T;H^1(\Omega)')} \le C.
    \end{align*}
    Defining $\kappa := 1 - \abs{\mean{\varphi_0}} \in (0,1]$ and recalling that $G_n$ satisfies \eqref{ass:F2}, we use Corollary~\ref{COR:REGPOT} along with \eqref{EST:PHIMUN} and property (i) of Step~1 to obtain
    \begin{align}
    \label{EST:GNMU}
        \bignorm{G_n'\big(\varphi_n(t)\big)}_{L^2(\Omega)}^2
        \le \frac{c}{\kappa^2}
        \Big[ C + \norm{F}_{L^\infty([-1,1])}^2 + \norm{\Grad\mu(t)}_{L^2(\Omega)}^2 \Big].
    \end{align}
    Integrating this inequality with respect to time from $0$ to $T$, we deduce
    \begin{align}
    \label{EST:GN}
        \bignorm{G_n'\big(\varphi_n\big)}_{L^2(\OT)}^2 
        \le C
        \big( C + \norm{\Grad\mu}_{L^2(0,T;L^2(\Omega))}^2 \big)
        \le C
    \end{align}
    by means of \eqref{EST:PHIMUN}.
    After testing the weak formulation \eqref{DEF:WS:WF2} written for $(\varphi_n,\mu_n)$ with $\eta\equiv 1$ and taking the modulus of the resulting equation, we use H\"older's inequality to obtain
    \begin{align*}
        \left|\intO \mu_n(t) \dx\right| \le \intO \big| G_n'\big(\varphi_n(t)\big) \big| \dx \le C.
    \end{align*}
    Moreover, since $F_2'(s) = -\theta_c s$ for all $s\in\R$, we further have
    \begin{align*}
        \bignorm{F_{1,n}'\big(\varphi_n\big)}_{L^2(\OT)}
        \le \bignorm{G_n'\big(\varphi_n\big)}_{L^2(\OT)} + \theta_c \norm{\varphi_n}_{L^\infty(0,T;L^2(\Omega))}
        \le C.
    \end{align*}
    Overall, we have thus shown
    \begin{align}
    \label{EST:UNI}
        &\norm{\varphi_n}_{L^\infty(0,T;H^1(\Omega))} 
        + \norm{\delt\varphi_n}_{L^2(0,T;H^1(\Omega)')} 
        \notag\\
        &\quad + \norm{\mu_n}_{L^2(0,T;H^1(\Omega))} 
        + \bignorm{F_{1,n}'\big(\varphi_n\big)}_{L^2(\OT)}
        \le C.
    \end{align} 

    \textit{Step~3: Convergence to a weak solution.}
    In view of the uniform estimate \eqref{EST:UNI}, we now use the Banach--Alaoglu theorem as well as the Aubin--Lions--Simon lemma to conclude that there exist functions $\varphi$, $\mu$ and $f$ such that
    \begin{alignat}{2}
        \label{CONV:1}
        \delt \varphi_n &\to \delt \varphi
        &&\quad\text{weakly in $L^2(0,T;H^1(\Omega)')$,} 
        \\
        \label{CONV:2}
        \varphi_n &\to \varphi
        &&\quad\text{weakly-$^*$ in $L^\infty(0,T;H^1(\Omega))$,}
        \notag\\
        &&&\qquad\text{strongly in $C([0,T];L^2(\Omega))$, a.e.~in $\OT$,}
        \\
        \label{CONV:3}
        \mu_n &\to \mu
        &&\quad\text{weakly in $L^2(0,T;H^1(\Omega))$},
            \\
        \label{CONV:4}
        F_{1,n}'(\varphi_n) &\to f
        &&\quad\text{weakly in $L^2(0,T;L^2(\Omega))$},
    \end{alignat}
    as $n\to\infty$ along a non-relabeled subsequence. 
    As the functions $\big|F_{1,n}'(\varphi_n)\big|$ are measurable and non-negative, Fatou's lemma implies
    \begin{align}
    \label{EST:F1PN:FATOU}
        \int_{\OT} \underset{n\to\infty}{\lim\inf} \; \abs{F_{1,n}'(\varphi_n)}^2 \dtx
        \le \underset{n\to\infty}{\lim\inf} \; \int_{\OT} \abs{F_{1,n}'(\varphi_n)}^2 \dtx
        \le C.
    \end{align}
    We now fix arbitrary representatives of the functions $\varphi$ and $\varphi_n$, $n\in\N$, of their respective equivalence classes in order to make a pointwise evaluation possible.
    Due to \eqref{CONV:2}, there exists a null set $\mathcal M \subset \OT$ such that $\varphi_n\to\varphi$ pointwise on $\OT\setminus \mathcal M$. 
    We now define the sets
    \begin{align*}
        \mathcal M_- &:= \big\{ (t,\x) \in \OT\setminus \mathcal M \;\big\vert\; \varphi(t,\x) \le -1 \big\},
        \\
        \mathcal M_0\, &:= \big\{ (t,\x) \in \OT\setminus \mathcal M \;\big\vert\; \varphi(t,\x) \in (-1,1) \big\},
        \\
        \mathcal M_+ &:= \big\{ (t,\x) \in \OT\setminus \mathcal M \;\big\vert\; \varphi(t,\x) \ge 1 \big\},
    \end{align*}
    which entails that $\Omega\setminus \mathcal M = \mathcal M_- \cup \mathcal M_0 \cup \mathcal M_+$.

    Let now $(t,\x) \in \mathcal M_+$ be arbitrary. 
    As $\varphi_n(t,\x) \to \varphi(t,\x) \ge 1$ as $n\to\infty$, we have
    \begin{alignat}{2}
    \label{PHIN:1}
        \varphi_n(t,\x) > &\; 0 &&\quad\text{if $n$ is sufficiently large},
        \\
    \label{PHIN:2}
        \min\big\{\varphi_n(t,\x),1-\tfrac 1n\big\} \to &\; 1 &&\quad\text{as $n\to\infty$}.
    \end{alignat}
    Since $F_{1,n}'$ is increasing on $(0,\infty)$, we infer from \eqref{PHIN:1} that
    \begin{align}
    \label{EST:FNP}
        F_{1,n}'\big(\varphi_n(t,\x)\big) &\ge F_{1,n}'\big(\min\big\{\varphi_n(t,\x),1-\tfrac 1n\big\}\big)
        \notag\\
        &= F_{1}'\big(\min\big\{\varphi_n(t,\x),1-\tfrac 1n\big\}\big)
    \end{align}
    for all sufficiently large $n\in\N$.
    By the definition of $F_1$ and the convergence \eqref{PHIN:2}, we further deduce that the right-hand side of \eqref{EST:FNP} tends to infinity as $n\to\infty$. Hence, it directly follows that
    \begin{align}
    \label{DIV:FNP}
        F_{1,n}'\big(\varphi_n(t,\x)\big) \to \infty \quad\text{as $n\to\infty$}.
    \end{align}
    As $(t,\x) \in \mathcal M_+$ was arbitrary, this divergence holds for all $(t,\x) \in \mathcal M_+$.
    We thus conclude that the set $\mathcal M_+$ has Lebesgue measure zero, since otherwise this behavior would contradict estimate \eqref{EST:F1PN:FATOU}.

    Proceeding analogously, we show that $\mathcal M_-$ is a Lebesgue null set.

    In view of the decomposition $\Omega\setminus \mathcal M = \mathcal M_- \cup \mathcal M_0 \cup \mathcal M_+$, this proves that $\varphi \in (-1,1)$ in $\OT\setminus(\mathcal M\cup\mathcal M_+ \cup\mathcal M-)$ and thus almost everywhere in $\OT$.

    Let now $(t,\x) \in \OT\setminus(\mathcal M\cup\mathcal M_+ \cup\mathcal M_-) = \mathcal M_0$ be arbitrary. Since $\varphi_n(t,\x) \to \varphi(t,\x) \in (-1,1)$ as $n\to\infty$, there exists $n_0\in\N$ such that for all $n\ge n_0$,
    \begin{align*}
        -1 + \frac1n \le \varphi_n(t,\x) \le 1 - \frac 1n
    \end{align*}
    and consequently,
    \begin{align*}
        F_{1,n}\big(\varphi_n(t,\x)\big) &= F_{1}\big(\varphi_n(t,\x)\big) \to F_{1}\big(\varphi(t,\x)\big),
        \\
        F_{1,n}'\big(\varphi_n(t,\x)\big) &= F_{1}'\big(\varphi_n(t,\x)\big) \to F_{1}'\big(\varphi(t,\x)\big),
    \end{align*}
    as $n\to\infty$.
    This proves that 
    \begin{align}
        \label{CONV:F:PTW}
        F_{1,n}(\varphi_n) \to F_{1}(\varphi)
        \quad\text{and}\quad
        F_{1,n}'(\varphi_n) \to F_{1}'(\varphi)
        \quad\text{a.e.~in $\OT$},
    \end{align}
    as $n\to\infty$.
    In combination with the weak convergence \eqref{CONV:4}, we conclude $f = F_{1}'(\varphi)$ almost everywhere in $\OT$ and thus, 
    \begin{align}
    \label{CONV:F1P}
        F_{1,n}'(\varphi_n) \to F_{1}'(\varphi)
        \quad\text{weakly in $L^2(0,T;L^2(\Omega))$}.
    \end{align}    
    Moreover, since $F_2$ is quadratic $F_2'$ is linear, we use \eqref{CONV:2} to infer
    \begin{align}
    \label{CONV:F2}
        F_2(\varphi_n) \to F_2(\varphi)
        \quad\text{strongly in $C([0,T];L^1(\Omega))$},
        \\
    \label{CONV:F2P}
        F_2'(\varphi_n) \to F_2'(\varphi)
        \quad\text{strongly in $C([0,T];L^2(\Omega))$},
    \end{align}
    as $n\to\infty$.
    Combining \eqref{CONV:F1P} and \eqref{CONV:F2P}, we obtain
    \begin{align}
    \label{CONV:GNP}
        G_n'(\varphi_n) \to F'(\varphi) 
        \quad\text{weakly in $L^2(0,T;L^2(\Omega))$},
    \end{align}
    as $n\to\infty$.
    In particular, this means that all properties demanded by Definition~\ref{DEF:WS*}\ref{DEF:WS:REG*} are established.
    
    Using \eqref{EST:GNMU} and \eqref{CONV:GNP} along with the weak lower semi-continuity of the $L^2(\Omega)$-norm, we further obtain for almost all $t\in[0,T]$,
    \begin{align}
        \bignorm{F'\big(\varphi(t)\big)}_{L^2(\Omega)}^2
        &\le 
        \underset{n\to\infty}{\lim\inf} \;
        \bignorm{G_n'\big(\varphi_n(t)\big)}_{L^2(\Omega)}^2
        \notag\\
        &\le C
        \Big[ 1 + \norm{F}_{L^\infty([-1,1])}^2 + \norm{\Grad\mu(t)}_{L^2(\Omega)}^2 \Big].
    \end{align}
    This verifies estimate \eqref{EST:F':LOG}.

    Next, for any $n\in\N$, we use the weak formulation \eqref{DEF:WS:WF2} written for $(\varphi_n,\mu_n)$ to deduce 
    \begin{align}
        &a_0 \norm{\Grad\varphi_n - \Grad\varphi}_{L^2(\OT)}^2
        \le \int_{\OT} \big( A'(\Grad\varphi_n) - A'(\Grad\varphi) \big)
            \cdot \big( \Grad\varphi_n - \Grad\varphi \big) \;\mathrm d(t,\x)
        \notag\\
        &\quad= \int_{\OT} \mu_n\, (\varphi_n - \varphi) \;\mathrm d(t,\x)
            - \int_{\OT} G_n'(\varphi_n)\, (\varphi_n - \varphi) \;\mathrm d(t,\x)
        \notag\\
        &\qquad - \int_{\OT} A'(\Grad\varphi) \cdot \big( \Grad\varphi_n - \Grad\varphi \big) \;\mathrm d(t,\x).
    \end{align}
    Here, due to the convergences \eqref{CONV:2}, \eqref{CONV:3} and \eqref{CONV:GNP}, the right-hand side tends to zero as $n\to\infty$. This proves that
    \begin{align}
    \label{CONV:GPHI}
        \Grad \varphi_n \to \Grad\varphi
        \quad\text{strongly in $L^2(0,T;L^2(\Omega))$}.
    \end{align}
    In view of the growth conditions on $A$ and $A'$ from \eqref{ass:A}, Lebesgue's general convergence theorem (see, e.g., \cite[Section~3.25]{Alt}) implies
    \begin{alignat}{2}
        \label{CONV:A}
        A(\Grad\varphi_n) &\to A(\Grad\varphi)
        &&\quad\text{strongly in $L^1(\OT)$}, 
        \\
        \label{CONV:AP}
        A'(\Grad\varphi_n) &\to A'(\Grad\varphi)
        &&\quad\text{strongly in $L^2(\OT;\R^d)$}. 
    \end{alignat}
    Moreover, as the function $M$ is bounded (see \eqref{ass:M}), we use Lebesgue's dominated convergence theorem along with the weak-strong convergence principle to show that
    \begin{alignat}{2}
        \label{CONV:M}
        M(\Grad\varphi_n,\varphi_n) \Grad\mu_n &\to M(\Grad\varphi,\varphi) \Grad\mu
        &&\quad\text{weakly in $L^2(\OT;\R^d)$},
        \\
        \label{CONV:M:SQRT}
        \sqrt{M(\Grad\varphi_n,\varphi_n)} \Grad\mu_n &\to \sqrt{M(\Grad\varphi,\varphi)} \Grad\mu
        &&\quad\text{weakly in $L^2(\OT;\R^d)$}.
    \end{alignat}
    Using the convergences \eqref{CONV:1}--\eqref{CONV:3}, \eqref{CONV:GNP}, \eqref{CONV:AP} and \eqref{CONV:M}, we eventually pass to the limit in the weak formulation \eqref{DEF:WS:WF} written for $(\varphi_n,\mu_n)$ to show that $(\varphi,\mu)$ satisfies the weak formulation \eqref{DEF:WS:WF*}.
    Moreover, since $\varphi_n(0) = \varphi_0$ a.e.~in $\Omega$, the strong convergence stated in \eqref{CONV:2} directly implies $\varphi(0) = \varphi_0$ a.e.~in $\Omega$. This means that all conditions of Definition~\ref{DEF:WS*}\ref{DEF:WS:WEAK*} are verified.

    It remains to prove the weak energy dissipation law. Let $\sigma\in C^\infty([0,T];[0,\infty))$ be an arbitrary test function.
    Using the strong convergences \eqref{CONV:A} and \eqref{CONV:F2} as well as the pointwise-a.e.~convergence \eqref{CONV:F:PTW} along with Fatou's lemma, we infer
    \begin{align}
    \label{LIMINF:E}
        &\int_0^T E\big(\varphi(t)\big) \, \sigma(t) \dt 
        = \int_0^T \intO \big[ A\big(\Grad\varphi(t)\big) + F_1\big(\varphi(t)\big) + F_2\big(\varphi(t)\big) \big]\, \sigma(t) \dx \dt
        \notag\\
        &\quad \le \underset{n\to\infty}{\lim\inf} \;
        \int_0^T \intO \big[ A\big(\Grad\varphi_n(t)\big) + F_{1,n}\big(\varphi_n(t)\big) + F_2\big(\varphi_n(t)\big) \big]\, \sigma(t) \dt
        \notag\\
        &\quad = \underset{n\to\infty}{\lim\inf} \; 
        \int_0^T E_n\big(\varphi_n(t)\big) \, \sigma(t) \dt .
    \end{align}
    In view of the weak convergence \eqref{CONV:M:SQRT}, we use the weak lower semi-continuity of the $L^2(\OT)$-norm as well as Fatou's lemma to deduce
    \begin{align}
    \label{LIMINF:M}
        &\int_0^T \frac 12 \int_0^t \intO M(\Grad\varphi,\varphi) \abs{\Grad\mu}^2 \dx \ds \, \sigma(t) \dt
        \notag\\
        &\le \underset{n\to\infty}{\lim\inf} \; \int_0^T \frac 12 \int_0^t \intO M(\Grad\varphi_n,\varphi_n) \abs{\Grad\mu_n}^2 \dx\ds \, \sigma(t) \dt.
    \end{align}
    We further recall that $(\varphi_n,\mu_n)$ satisfies the weak energy dissipation law and that $E_n(\varphi_0) \le E(\varphi_0)$ for all $n\in\N$ due to property (i) of Step~1. Hence, combining \eqref{LIMINF:E} and \eqref{LIMINF:M}, we conclude
    \begin{align}
        &\int_0^T \left( E\big(\varphi(t)\big) 
            + \frac 12 \int_0^t \intO M(\Grad\varphi,\varphi) \abs{\Grad\mu}^2 \dx \ds \right) \,\sigma(t) \dt 
        \notag\\
        & \le \underset{n\to\infty}{\liminf} \; 
        \Bigg[ \int_0^T \left(  E_n\big(\varphi_n(t)\big) 
            + \frac 12 \int_0^t \intO M(\Grad\varphi_n,\varphi_n) \abs{\Grad\mu_n}^2 \dx \ds \right) \,\sigma(t) \dt \Bigg]
        \notag\\
        & \le \underset{n\to\infty}{\liminf} \; \int_0^T E_n\big(\varphi_0 \big) \, \sigma(t) \dt 
        \;\le\;  \int_0^T E(\varphi_0) \, \sigma(t) \dt.
    \end{align}
    As the test function $\sigma$ was arbitrary, this verifies the energy dissipation law \eqref{DEF:WS:DISS*} \revised{for almost all $t\in[0,T]$. Obviously, the time integral in \eqref{DEF:WS:DISS*} is non-negative and continuous with respect to $t\in[0,T]$. Moreover, the functions 
    \begin{equation*}
        t\mapsto \intO A(\Grad \varphi(t)) \dx
        \quad\text{and}\quad
        t\mapsto \intO F(\varphi(t)) \dx
    \end{equation*}
    are lower semi-continuous on $[0,T]$. For the first mapping, this holds since $A$ is convex and continuous,
    and for the second one, it follows by means of Fatou's lemma. Consequently, the mapping $t\mapsto E(\varphi(t))$ is lower semi-continuous on $[0,T]$. This entails that the energy dissipation law \eqref{DEF:WS:DISS*} actually holds true for \textit{all} times $t\in[0,T]$.
    }
    
    This means that Definition~\ref{DEF:WS*}\ref{DEF:WS:ENERGY*} is verified.

    In summary, we have shown that the pair $(\varphi,\mu)$ is a weak solution in the sense of Definition~\ref{DEF:WS*} which additionally satisfies the estimate \eqref{EST:F':LOG}. Thus, the proof is complete.
\end{proof}


\section{Uniqueness of the weak solution} \label{SECT:UNIQ}

In this section, we present the proof of Theorem~\ref{THM:UNIQ}, which states (under suitable assumptions on the potential $F$) that weak solutions of the anisotropic Cahn--Hilliard equation are unique provided that the mobility function $M$ is constant.

\begin{proof}[Proof of Theorem~\ref{THM:UNIQ}]
As the mobility function $M$ is assumed to be constant, we simply set $M\equiv 1$ without loss of generality.

Let $(\varphi,\mu)$ be a weak solution given by Proposition~\ref{PROP:REGPOT} if $F$ is a regular potential or given by Theorem~\ref{THM:LOGPOT} if $F$ is the logarithmic potential.
We now assume that $(\varphi_*,\mu_*)$ is a further weak solution of the anisotropic Cahn--Hilliard equation \eqref{AICH} to the same initial datum $\varphi_0$ in the sense of Definition~\ref{DEF:WS} if $F$ is a regular potential or in the sense of Definition~\ref{DEF:WS*} if $F$ is the logarithmic potential. In the following, we use the notation
\begin{align*}
    \ov\varphi := \varphi - \varphi_* \quad\text{and}\quad \ov\mu := \mu - \mu_*.
\end{align*}
We point out that 
\begin{align*}
    \ov\varphi(0) = 0 
    \qquad\text{and}\qquad
    \ov\varphi(t) \in H^1_{(0)}(\Omega) \;\;\text{for almost all $t\in [0,T]$}.
\end{align*}
In order to prove uniqueness, we intend to show that $\norm{\ov\varphi(t)}_{-1} = 0$ for all $t\in [0,T]$.
\revised{From the weak formulations written for $(\varphi,\mu)$ and $(\varphi_*,\mu_*)$, we deduce
\begin{subequations}
\label{WF:DIFF}
\begin{align}
    \label{WF:DIFF:1}
    \langle \delt \ov\varphi(s) , \zeta \rangle 
    &= - \intO \Grad \ov\mu(s) \cdot \Grad \zeta \dx,
    \\
    \label{WF:DIFF:2}
    \intO \ov\mu(s) \, \eta \dsx
    &= \intO \big[ A'\big(\Grad\varphi(s)\big)- A'\big(\Grad\varphi_*(s)\big) \big] \cdot \Grad\eta \dx
    \notag\\
    &\qquad
    + \intO \big[F'\big(\varphi(s)\big) - F'\big(\varphi_*(s)\big) \big] \eta \dx
\end{align}
\end{subequations}
for all test functions $\zeta,\eta\in H^1(\Omega)$ and almost all $s\in [0,T]$.
Testing \eqref{WF:DIFF:1} with $\zeta \equiv 1$, we immediately observe that $\delt\ov\varphi(s) \in H^{-1}_{(0)}(\Omega)$ for almost all $s\in [0,T]$.

Let now $t \in [0,T]$ be arbitrary. In the following, we use the notation $\Ot := (0,t) \times \Omega$. 
For almost all $s\in[0,t]$, we now test \eqref{WF:DIFF:2} with $\eta:= \ov\varphi(s) \in L^2(0,T;H^1(\Omega))$ and integrate the resulting equation with respect to $s$ from $0$ to $t$. Recalling the monotonicity of $A'$ and $F_1'$ as well as the Lipschitz continuity of $F_2'$, we obtain 
\begin{align}
    \label{DIFF:1*}
    &\int_{\Ot} \ov\mu \, \ov\varphi \dsx 
    \notag\\
    &\quad = \int_{\Ot} (A'(\Grad\varphi) -
        A'(\Grad\varphi_*))  \cdot \Grad\ov\varphi \dsx
    \notag\\
    &\qquad
        + \int_{\Ot}
        \big[F_1'(\varphi) - F_1'(\varphi_*) \big] \ov\varphi
        + \big[F_2'(\varphi) - F_2'(\varphi_*) \big] \ov\varphi
        \dsx
    \notag\\
    &\quad \ge a_0 \int_{\Ot} \abs{\Grad\ov\varphi}^2 \dsx
        - L \int_{\Ot} \abs{\ov\varphi}^2
        \dsx,
\end{align}
where $L$ denotes the minimal Lipschitz constant of $F_2'$. 
Invoking the definition of $\PN$ (see Subsection~\ref{SUBSECT:NOT}),
we derive the inequality
\begin{align*}
    \norm{\ov\varphi(s)}_{L^2(\Omega)}^2 
    &= \intO \Grad \PN \ov \varphi(s) \cdot \Grad \ov\varphi(s) \dx
    \notag\\
    &
    \le \frac{a_0}{2} \norm{\Grad\ov\varphi(s)}_{L^2(\Omega)}^2
        + \frac{1}{2a_0} \norm{\ov\varphi(s)}_{-1}^2
\end{align*}
for almost all $s\in [0,T]$. Using this estimate to bound the last line of \eqref{DIFF:1*} from below, we conclude
\begin{align}
    \label{DIFF:1}
    \int_{\Ot} \ov\mu \, \ov\varphi \dsx 
    \ge \frac{a_0}{2} \int_0^t \norm{\Grad\ov\varphi(s)}_{L^2(\Omega)}^2 \ds    
        - C \int_0^t \frac 12 \norm{\ov\varphi(s)}_{-1}^2 \ds,
\end{align}
where $C$ is a positive constant depending on $L$ and $a_0$.
By proceeding exactly as for the isotropic Cahn--Hilliard equation,
the integral on the right-hand side can be expressed as
\begin{align}
    \label{DIFF:2}
    \int_{\Ot} \ov\varphi \, \ov\mu \dsx = - \frac 12 \norm{\ov\varphi(t)}_{-1}^2.
\end{align}
(For more details about the derivation of this identity, we refer to \cite{Garcke2003} or \cite{Garcke2020}.)
Combining \eqref{DIFF:1} and \eqref{DIFF:2}, we thus arrive at}
\begin{align*}
    \revised{\frac 12 \norm{\ov\varphi(t)}_{-1}^2} + \frac{a_0}{2} \int_0^t \norm{\Grad\ov\varphi(s)}_{L^2(\Omega)}^2 \ds 
    \le C \int_0^t \frac 12 \norm{\ov\varphi(s)}_{-1}^2 \ds
\end{align*}
for all $t\in [0,T]$. Hence, applying Gronwall's lemma, we eventually conclude 
\begin{align*}
    \norm{\ov\varphi(t)}_{-1} = 0 \quad\text{for all $t\in[0,T]$}.
\end{align*}
This directly implies $\ov\varphi = 0$ a.e.~in $\Omega_T$. As a consequence, the right-hand side of \eqref{WF:DIFF:2} vanishes and hence, we also have $\ov\mu = 0$ a.e.~in $\Omega_T$. 

This proves that $(\varphi,\mu)$ is the unique weak solution and thus, the proof is complete.
\end{proof}


\section{Regularity theory for a class of quasilinear elliptic PDEs} \label{SECT:REG:QLE}

In this section, we present regularity results for a class of quasilinear elliptic PDEs. They will be applied in the proof of Theorem~\ref{THM:REG} in order to improve the spatial regularity of the phase-field component of a weak solution to the anisotropic Cahn--Hilliard equation \eqref{AICH}.

We make the following assumptions.
\begin{enumerate}[label=$(\mathbf{R \arabic*})$, ref = $\mathbf{R \arabic*}$]
	\item \label{reg:ass:dom} The set $U\subset\R^n$ with $n\in\N$ is a bounded open subset of $\R^n$ with Lipschitz boundary and outer unit normal vector field $\n$.
    \item \label{reg:ass:a} The function $a:\R^n\times\R^n \to \R^n$, $(\x,\p) \mapsto a(\x,\p)$ is 
	\begin{enumerate}[label=$(\mathbf{R \arabic{enumi}.\arabic*})$, ref = $\mathbf{R \arabic{enumi}.\arabic*}$]
        \item \label{reg:ass:a:Lip} Lipschitz continuous in $\p$, i.e., there exists a constant $C_L\geq0$ such that
        \begin{equation*}
            \bigabs{a(\x,\p)-a(\x,\q)}
            \leq C_L \abs{\p-\q}
		\end{equation*}
        for all $\p,\q\in\R^n$ and $\x\in\R^n$,
		\item \label{reg:ass:a:mon} strongly monotone in $\p$, i.e., there exists a constant $C_M>0$ such that
		\begin{equation*}
            \big(a(\x,\p)-a(\x,\q)\big) \cdot (\p-\q)
            \geq C_M \abs{\p-\q}^2
		\end{equation*}
        for all $\p,\q\in\R^n$ and $\x\in\R^n$,
		\item \label{reg:ass:a:qLip} quasi-Lipschitz continuous in $\x$, i.e., there exists a constant $C_Q\geq0$ such that
		\begin{equation*}
            \bigabs{a(\x,\p)-a(\y,\p)}
            \leq C_Q \big( \abs{\p}+1 \big) \abs{\x-\y}
		\end{equation*}
        for all $\x,\y\in\R^n$ and $\p\in\R^n$.
	\end{enumerate}
    \item \label{reg:ass:f} The function $f:U\to\R$ belongs to $L^2(U)$.
\end{enumerate}

We now establish a regularity theory for weak solutions of the following quasi-linear elliptic PDE:
\begin{subequations}
\label{reg:QLE} 
\begin{alignat}{2}
    \label{reg:QLE:1}
    -\Grad \cdot a(\cdot,\Grad u) &= f
    &&\quad\text{in $U$},\\
    \label{reg:QLE:2}
    a(\cdot,\Grad u) \cdot \n &= 0
    &&\quad\text{on $\partial U$}.
\end{alignat}
\end{subequations}
A weak solution of this boundary value problem is defined as follows.

\begin{definition} \label{reg:DEF:WS}
    Under the assumptions \eqref{reg:ass:dom}--\eqref{reg:ass:f}, a function $u\in H^1(U)$ is called a weak solution of the boundary value problem \eqref{reg:QLE}
    if $u$ satisfies the weak formulation
    \begin{align} 
        \label{reg:QLE:WF}
        \int_U a(\cdot,\Grad u) \cdot \Grad v \dx
        = \int_U f v \dx
        \quad \text{for all $v\in H^1(U)$}.
    \end{align}
\end{definition}

To prove additional regularity of such weak solutions, we first recall the notion of difference quotients.
Let $U\subset\R^n$ with $n\in\N$ be a bounded open subset of $\R^n$. For any locally integrable function $w\colon U\to\R$ and any open subset $V\ssubset U$, for $k\in\{1,\dots,d\}$, the $k$-th difference quotient of step size $h$ is defined by 
\begin{align*}
	\Grad_k^h w(\x)
	:= \frac{w(\x+h\e_k) - w(\x)}{h}
\end{align*}
for $\x\in V$ and $h\in\R$ with $0<|h|<\dist(V,\partial U)$. Here, $\e_k\in\R^n$ denotes the $k$-th unit vector in $\R^n$. For vector-valued functions the difference quotient is to be understood componentwise.

\subsection{Regularity in the interior}

\begin{theorem} \label{reg:THRM:reg:int}
    Suppose that the assumptions \eqref{reg:ass:dom}--\eqref{reg:ass:f} are fulfilled and let $u\in H^1(U)$ be a weak solution of \eqref{reg:QLE} in the sense of Definition~\ref{reg:DEF:WS}. Then, it holds
    \begin{align*}
        u\in H^2_\mathrm{loc}(U),
    \end{align*}
    and for every open subset $V\ssubset U$, there exists a constant $C\ge0$ depending only on $U$, $V$ and $a$ such that the estimate 
    \begin{align}
        \label{reg:EST:u:H2V}
        \norm{u}_{H^2(V)}
        \le C \big( \norm{f}_{L^2(U)} + \norm{u}_{L^2(U)} + 1 \big)
    \end{align}
    holds. 
\end{theorem}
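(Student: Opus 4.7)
The plan is to apply the Nirenberg difference quotient method adapted to the quasilinear situation with non-smooth coefficients in $\x$. First I would establish the global $H^1$ bound
\[
\norm{u}_{H^1(U)} \le C\big(\norm{f}_{L^2(U)} + \norm{u}_{L^2(U)} + 1\big)
\]
by testing the weak formulation \eqref{reg:QLE:WF} with $v = u$. Using strong monotonicity \eqref{reg:ass:a:mon} with $\q = \zero$ gives $a(\x,\Grad u)\cdot\Grad u \ge C_M \abs{\Grad u}^2 + a(\x,\zero)\cdot\Grad u$, and since $a(\cdot,\zero)$ is quasi-Lipschitz on the bounded set $U$, it lies in $L^\infty(U)$, so Young's inequality absorbs the linear term.

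For the interior $H^2$ estimate, fix $V\ssubset U$ and pick a cutoff $\eta\in C_c^\infty(U)$ with $\eta\equiv 1$ on $V$ and $0\le\eta\le 1$. For $k\in\oton$ and $h\in\R$ small enough that $|h|<\tfrac12\dist(\supp\eta,\partial U)$, test \eqref{reg:QLE:WF} with $v := -\dkmh(\eta^2 \dkh u) \in H^1(U)$. Discrete integration by parts on the left yields
\[
\int_U \dkh\bigl(a(\cdot,\Grad u)\bigr) \cdot \Grad\bigl(\eta^2 \dkh u\bigr)\dx
= -\int_U f\, \dkmh(\eta^2 \dkh u) \dx.
\]
Writing $\dkh(a(\x,\Grad u)) = \tfrac{1}{h}\bigl[a(\x+h\e_k,\Grad u(\x+h\e_k))-a(\x+h\e_k,\Grad u(\x))\bigr] + \tfrac{1}{h}\bigl[a(\x+h\e_k,\Grad u(\x))-a(\x,\Grad u(\x))\bigr]$ and pairing the first term with $\eta^2 \dkh\Grad u$ exploits \eqref{reg:ass:a:mon} to produce the coercive quantity $C_M\eta^2\abs{\dkh\Grad u}^2$, while the second term is controlled pointwise by $C_Q(\abs{\Grad u}+1)$ via \eqref{reg:ass:a:qLip}. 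The remaining cross term $\dkh a \cdot 2\eta\Grad\eta\,\dkh u$ is bounded using \eqref{reg:ass:a:Lip} and $|(II)|\le C_Q(|\Grad u|+1)$.

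For the right-hand side, the standard difference-quotient bound gives $\norm{\dkmh(\eta^2\dkh u)}_{L^2(U)} \le \norm{\Grad(\eta^2\dkh u)}_{L^2(U)} \le \norm{\eta^2\dkh\Grad u}_{L^2(U)} + 2\norm{\eta\Grad\eta\,\dkh u}_{L^2(U)}$. Combining everything and applying Young's inequality with a small parameter $\delta>0$ to absorb the $\norm{\eta\,\dkh\Grad u}_{L^2(U)}^2$ contributions into the coercive term, one obtains
\[
\tfrac{C_M}{2}\int_U \eta^2\abs{\dkh\Grad u}^2\dx
\le C\bigl(\norm{f}_{L^2(U)}^2 + \norm{\dkh u}_{L^2(\supp\eta)}^2 + \norm{\Grad u}_{L^2(U)}^2 + 1\bigr),
\]
with $C$ depending on $U$, $V$, $a$, and $\eta$ but independent of $h$. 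Since $\norm{\dkh u}_{L^2(\supp\eta)}\le \norm{\Grad u}_{L^2(U)}$ for $|h|$ sufficiently small, the right-hand side is bounded uniformly in $h$ by $C(\norm{f}_{L^2(U)}^2+\norm{u}_{L^2(U)}^2+1)$ via the first step. Taking $h\to 0$ along a weakly convergent subsequence identifies the limit as $\delxi\Grad u\in L^2(V)$ for each $k$, which gives $u\in H^2(V)$ together with the desired estimate.

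The main obstacle I anticipate is the interaction between the mere quasi-Lipschitz dependence of $a$ in $\x$ and the factor $(\abs{\p}+1)$ in \eqref{reg:ass:a:qLip}: the term $(II)$ does not vanish as $h\to 0$ in an $L^\infty$ sense but only in an $L^2$ sense weighted by $\abs{\Grad u}+1$, so one must carefully combine Lipschitz dependence in $\p$ with the growth in $\x$, and use the $H^1$ bound from the first step to close the estimate without requiring any smoothness of $a$ in $\x$.
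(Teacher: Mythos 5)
Your proposal is correct and the core argument — testing with the second-order difference quotient $v=-\dkmh(\eta^2\dkh u)$, splitting $\dkh\big(a(\cdot,\Grad u)\big)$ into the $\p$-increment at the shifted point (handled by strong monotonicity) and the $\x$-increment (handled by quasi-Lipschitz continuity), absorbing the cross and right-hand-side terms by Young's inequality, and concluding via the uniform bound on difference quotients — is essentially identical to the paper's Step~1. The only deviation is your auxiliary gradient bound: you test globally with $v=u$, which is admissible here since the Neumann weak formulation holds for all $v\in H^1(U)$, and this yields a global bound $\norm{\Grad u}_{L^2(U)}\le C(\norm{f}_{L^2(U)}+\norm{u}_{L^2(U)}+1)$; the paper instead derives a localized Caccioppoli estimate by testing with $v=\zeta^2 u$. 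Both close the argument with constants of the required dependence, so your version is a mild simplification rather than a different method.
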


To prove Theorem~\ref{reg:THRM:reg:int}, we recall the following properties of difference quotients.

\begin{lemma} \label{reg:LMM:DQ}
    Let $U\subset\R^n$ with $n\in\N$ be a bounded open subset of $\R^n$ and let $V\ssubset U$ be any open subset. Then, it holds:
    \begin{enumerate}[label=\textnormal{(\alph*)}]
		\item \label{reg:DQ.a}
		If $w_1,w_2\in L^2(U)$ are functions with $\supp w_i\subset V$ for $i=1$ or $i=2$, then the identity 
		\begin{align*}
			\int_U w_1\Grad_k^{-h}w_2 \dx
			= -\int_U \Grad_k^{h}w_1 w_2 \dx
		\end{align*}
        holds for all $0<\abs{h}< \dist(V,\partial U)$ and all $k\in\oton$.
		\item\label{reg:DQ.b}
		Suppose $w\in H^1(U)$ and $k\in\oton$. Then, for every open set $U'\ssubset U$, the difference quotient $\dkh w$ belongs to $L^2(U')$ for all $0<\abs{h}< \dist(U',\partial U)$, and it satisfies the estimate
		\begin{align*}
			\bignorm{\dkh w}_{L^2(U')}
			\leq \norm{\del_k w}_{L^2(U)}.
		\end{align*}
		\item \label{reg:DQ.c}
		Let $w\colon U\to\R^n$ be a locally integrable function with $w\in L^2(V)$ and let $k\in\oton$ be arbitrary. Moreover, assume that there exists a constant $C\ge 0$ such that $\dkh w \in L^2(V)$ and $\norm{\dkh w}_{L^2(V)} \le C$ for all $0<\abs h< \dist(V,\partial U)$. Then, the weak partial derivative $\del_k w$ exists and it holds
		\begin{align*}
			\del_k w\in L^2(V) \quad\text{with}\quad
			\norm{\del_k w}_{L^2(V)} \le C.
		\end{align*}
	\end{enumerate}
\end{lemma}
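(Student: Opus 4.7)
The plan is to handle each of the three parts in turn, building on the previous one where helpful. None of the statements are deep in the classical PDE sense (they are standard difference quotient identities), but care is needed with domain translations and the density argument.

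For part (a), I would simply compute. Write out the definition
\begin{equation*}
    \int_U w_1\,\dkmh w_2 \dx
    = -\frac{1}{h} \int_U w_1(\x)\, w_2(\x - h\e_k) \dx
    + \frac{1}{h} \int_U w_1(\x)\, w_2(\x) \dx,
\end{equation*}
and perform the substitution $\y = \x - h\e_k$ in the first integral. The support assumption on $w_i$ together with $0 < |h| < \dist(V,\partial U)$ guarantees that the translated integrand is still supported inside $U$, so the substitution changes the effective domain of integration only on a set where the integrand vanishes. Rearranging the two terms yields $-\int_U \dkh w_1\cdot w_2 \dx$.

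For part (b), I would first prove the estimate for $w\in C^\infty(\overline U)\cap H^1(U)$ via the fundamental theorem of calculus, writing
\begin{equation*}
    \dkh w(\x) = \frac{1}{h}\int_0^h \partial_k w(\x + t\e_k) \,\mathrm d t,
\end{equation*}
applying Cauchy--Schwarz inside the integral to obtain $|\dkh w(\x)|^2 \leq \frac{1}{|h|}\int_0^h |\partial_k w(\x + t\e_k)|^2\,\mathrm d t$, and then integrating over $U'$ and using Fubini's theorem. The condition $|h|<\dist(U',\partial U)$ ensures that $U' + t\e_k\subset U$ for every $|t|\leq |h|$, which bounds the iterated integral by $\norm{\partial_k w}_{L^2(U)}^2$. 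For general $w\in H^1(U)$ I would then use Meyers--Serrin to approximate $w$ by smooth functions $w_j$ in $H^1(U)$; the associated difference quotients $\dkh w_j$ converge to $\dkh w$ in $L^2(U')$ since the operator $\dkh$ is continuous from $L^2(U)$ into $L^2(U')$, and the estimate passes to the limit.

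For part (c), the plan is to use weak compactness. Given the uniform bound, for any sequence $h_m\to 0$ the family $\{\dkh[h_m] w\}$ is bounded in $L^2(V)$, so by the Banach--Alaoglu theorem there exists a subsequence (not relabeled) and $v\in L^2(V)$ with $\dkh[h_m] w \wto v$ in $L^2(V)$ and $\norm{v}_{L^2(V)}\leq C$. To identify $v$ as the weak partial derivative $\del_k w$, I would test against arbitrary $\varphi\in C_c^\infty(V)$ and apply part (a) with $w_1 = w$, $w_2 = \varphi$ (which has support strictly inside $V$):
\begin{equation*}
    \int_V \dkh[h_m] w \cdot \varphi \dx
    = -\int_V w\cdot \dkh[-h_m]\varphi \dx.
\end{equation*}
The left-hand side converges to $\int_V v\varphi \dx$ by weak convergence, while $\dkh[-h_m]\varphi \to \del_k\varphi$ uniformly on $V$ (since $\varphi$ is smooth with compact support), so the right-hand side converges to $-\int_V w\,\del_k\varphi \dx$. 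This exactly says that $v$ is the weak partial derivative $\del_k w\in L^2(V)$, with the bound inherited from weak lower semi-continuity of the norm.

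The only mildly delicate point is the bookkeeping for translated domains in (a) and (b) — verifying that the restrictions on $|h|$ really do keep all translated supports inside $U$ so that no boundary contribution is lost. Everything else is routine.
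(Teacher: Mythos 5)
Your proposal is correct and takes essentially the same route as the paper, which disposes of (a) by the same change of variables and simply cites Gilbarg--Trudinger (Lemma 7.23) for (b) and Evans (\S5.8.2, Theorem 3) for (c) — the standard proofs you have written out in full. The only bookkeeping point worth adding in (c) is to take $|h_m|<\dist(\supp\varphi,\partial V)$ (true for large $m$) so that the discrete integration by parts is applied with the translated support of $\varphi$ still inside $V$, where $w$ is known to be in $L^2$.
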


\begin{proof}
    The result \ref{reg:DQ.a} follows directly by a change of variables. A proof of \ref{reg:DQ.b} can be found, e.g., in \cite[Lemma~7.23]{Gilbarg2001}. Moreover, \ref{reg:DQ.c} can be established as in \cite[§5.8.2~Theorem~3]{Evans2010}.
\end{proof}

We now proceed with the proof of Theorem~\ref{reg:THRM:reg:int}.

\begin{proof}[Proof of Theorem~\ref{reg:THRM:reg:int}.]
    The idea of this proof is to use Lemma~\ref{reg:LMM:DQ}\ref{reg:DQ.c} and therefore, to bound every difference quotient of $\Grad u$ uniformly in $L^2(V)$. 

    Let $V\ssubset U$ be an arbitrary open subset with $\delta:=\dist(V,\partial U)>0$. Defining 
    $$W':=\bigcup_{\x\in V} B_{\delta/4}(\x) \quad\text{and}\quad W:=\bigcup_{\x\in V} B_{\delta/2}(\x),$$
    the subsets $W', W\subset U$ are open with $V\ssubset W'\ssubset W\ssubset U$. 

    In this proof, unless stated otherwise, $C$, $C_1$, $C_2$, $C_3$ will denote non-negative constants depending only on $U$, $V$ and $a$. The value of $C$ may change from line to line. We point out that due to their construction, the subsets $W'$ and $W$ depend only on $U$ and $V$. 

    \textit{Step~1.} To ensure a positive distance to the boundary $\partial U$, we fix a cutoff function $\zeta\in C^\infty(\R^n)$ with $0\le\zeta\le1$ such that $\zeta=1$ in $V$ and $\zeta=0$ in $\R^n\setminus W'$. In particular, by this construction, the function $\zeta$ depends only on $U$ and $V$.
    We write $w:=\dkh u$ to denote the $k$-th difference quotient of $u$ and we test the weak formulation \eqref{reg:QLE:WF} with the second-order difference quotient $v:=-\dkmh (\zeta^2w)$, where $k\in\oton$ is arbitrary and $0<\abs h< \frac{\delta}{4}$. On the left-hand side, using Lemma~\ref{reg:LMM:DQ}\ref{reg:DQ.a}, we obtain
	\begin{align}
        \label{EST:R1}
		&\int_U a(\cdot,\Grad u) \cdot \Grad v \dx
		= -\int_U a(\cdot,\Grad u) \cdot \dkmh \big(\Grad (\zeta^2w)\big) \dx\nonumber\\
		&\quad= \int_U \dkh \big(a(\cdot,\Grad u)\big) \cdot \Grad (\zeta^2w) \dx\nonumber\\
		&\quad= \int_U \dkh \big(a(\cdot,\Grad u)\big) \cdot 2\zeta \Grad\zeta w \dx
        +  \int_U \dkh \big(a(\cdot,\Grad u)\big) \cdot \zeta^2\Grad w \dx.
	\end{align}
    Employing the Lipschitz continuity of $a$ with respect to $\p$ from \eqref{reg:ass:a:Lip}, the quasi Lipschitz continuity of $a$ with respect to  $\x$ from \eqref{reg:ass:a:qLip}, Lemma~\ref{reg:LMM:DQ}\ref{reg:DQ.b} as well as Young's inequality, the modulus of the first summand on the right-hand side of \eqref{EST:R1} can be bounded by
    \begin{align}
	\label{reg:lhs:1.summand}
		&\biggabs{\int_U \dkh \big(a\big(\x,\Grad u(\x)\big)\big) \cdot 2\zeta \Grad\zeta w \dx} \nonumber\\
		&\le \int_W \bigg( \frac{1}{|h|} \Bigabs{a\big(\x+h\e_k, \Grad u(\x+h\e_k)\big) - a\big(\x+h\e_k, \Grad u(\x)\big) } \nonumber\\
		&\quad + \frac{1}{|h|} \Bigabs{a\big(\x+h\e_k, \Grad u(\x)\big) - a\big(\x, \Grad u(\x)\big) } \bigg) 2\zeta \, |\Grad\zeta| \, |w| \dx \nonumber\\
        &\le \int_W \bigg( \frac{C_L}{|h|}  \bigabs{ \Grad u(\x+h\e_k)  - \Grad u(\x) } 
        +  C_Q \big(|\Grad u(\x)|+1\big) 
			\bigg) 2\zeta \, |\Grad\zeta| \, |w| \dx \nonumber\\
		&= \int_W \Big(C_L |\Grad w| + C_Q \big(|\Grad u|+1\big) \Big) 2\zeta \, |\Grad\zeta| \, |w| \dx \nonumber\\
		&\le \frac{C_M}{4} \int_W \zeta^2 |\Grad w|^2 \dx 
        + C \int_W |\Grad u|^2 + 1 \dx 
		+ C \int_W |\Grad\zeta|^2 w^2 \dx \nonumber\\
		&\le \frac{C_M}{4} \int_W \zeta^2 |\Grad w|^2 \dx 
		+ C \int_W |\Grad u|^2 + 1 \dx 
		+ C \int_{W'} w^2 \dx \nonumber\\
		&\le \frac{C_M}{4} \int_W \zeta^2 |\Grad w|^2 \dx 
		+ C \int_W |\Grad u|^2 + 1 \dx 
		+ C \int_W |\Grad u|^2 \dx \nonumber\\
		&\le \frac{C_M}{4} \int_W \zeta^2 |\Grad w|^2 \dx 
		+ C_1 \int_W |\Grad u|^2 \dx 
		+ C_1.
	\end{align}
    Invoking the strong monotonicity of $a$ with respect to $\p$ (see \eqref{reg:ass:a:mon}) and the quasi Lipschitz continuity of $a$ with respect to $\x$ (see \eqref{reg:ass:a:qLip}), we use Young's inequality to deduce that the second summand on the right-hand side of \eqref{EST:R1} can be estimated by
    \begin{align}
	\label{reg:lhs:2.summand}
		&\int_U \dkh \big(a \big(\x,\Grad u(\x)\big)\big) \cdot \zeta^2\Grad w \dx \nonumber\\
		&\ge \int_W \frac{1}{h} \Big( a\big(\x+h\e_k, \Grad u(\x+h\e_k)\big) - a\big(\x+h\e_k, \Grad u(\x)\big) \Big) \cdot \zeta^2\Grad w \dx \nonumber\\
		&\quad- \int_W \frac{1}{|h|} \Bigabs{ a\big(\x+h\e_k, \Grad u(\x)\big) - a\big(\x, \Grad u(\x)\big) } \, \zeta^2 \, |\Grad w| \dx \nonumber\\
		&\ge \int_W \frac{1}{h^2} C_M \bigabs{ \Grad u(\x+h\e_k) - \Grad u(\x) }^2  \zeta^2 \dx \nonumber\\
		&\quad - \int_W \frac{1}{|h|} C_Q \big(|\Grad u(\x)|+1 \big) |h\e_k| \, \zeta^2 \, |\Grad w| \dx \nonumber\\
		&= \int_W C_M \zeta^2|\Grad w|^2 \dx
		- \int_W C_Q \big(|\Grad u|+1\big) \zeta^2 |\Grad w| \dx \nonumber\\
		&\ge \int_W C_M \zeta^2 |\Grad w|^2 \dx
		- \frac{C_M}{4} \int_W \zeta^2 |\Grad w|^2 \dx
		- C \int_W |\Grad u|^2+1 \dx \nonumber\\
		&\ge \frac{3C_M}{4} \int_W \zeta^2|\Grad w|^2 \dx
		- C_2 \int_W |\Grad u|^2 \dx - C_2.
    \end{align}
    Furthermore, applying Lemma~\ref{reg:LMM:DQ}\ref{reg:DQ.b} twice, we derive the estimate
	\begin{align*}
		&\int_W |v|^2 \dx
		= \int_{W'} \bigabs{\dkmh (\zeta^2w)}^2 \dx
        \\
		&\quad 
		\le C \int_W \bigabs{\Grad (\zeta^2w)}^2 \dx
        = C \int_{W'} \bigabs{2\zeta \Grad\zeta w + \zeta^2 \Grad w}^2 \dx
        \\
		&\quad 
		\le C \int_{W'} |w|^2 + \zeta^2 |\Grad w|^2 \dx
		\le C_3 \int_W |\Grad u|^2 + \zeta^2 |\Grad w|^2 \dx.
	\end{align*}
    By means of Young's inequality, the right-hand side of \eqref{reg:QLE:WF} can be bounded by
	\begin{align}
	\label{reg:rhs}
	   \biggabs{ \int_U fv \dx}
		&\le \frac{C_M}{4C_3} \int_W |v|^2 \dx
			+ C \int_W f^2 \dx \nonumber\\
		&\le \frac{C_M}{4} \int_W |\Grad u|^2 + \zeta^2 |\Grad w|^2 \dx
			+ C \int_W f^2 \dx \nonumber\\
		&\le \frac{C_M}{4} \int_W \zeta^2 |\Grad w|^2 \dx
			+ C \int_W f^2 + |\Grad u|^2 \dx.
		\end{align}
    Altogether, combining the estimates \eqref{reg:lhs:1.summand}, \eqref{reg:lhs:2.summand}, \eqref{reg:rhs} and the weak formulation \eqref{reg:QLE:WF}, we conclude
	\begin{align*}
		&\frac{3C_M}{4} \int_W \zeta^2|\Grad w|^2 \dx
			- C_2 \int_W |\Grad u|^2 \dx - C_2 \\
		&\qquad- \bigg( \frac{C_M}{4} \int_W \zeta^2 |\Grad w|^2 \dx 
			+ C_1 \int_W |\Grad u|^2 \dx 
			+ C_1 \bigg) \\
		&\quad\le \biggabs{ \int_U a(\cdot,\Grad u) \cdot \Grad v \dx }
		= \biggabs{ \int_U fv \dx }\\
		&\quad\le \frac{C_M}{4} \int_W \zeta^2 |\Grad w|^2 \dx
			+ C \int_W f^2 + |\Grad u|^2 \dx,
	\end{align*}
    which eventually yields
    \begin{align*}
		\int_V \bigabs{\dkh\Grad u}^2 \dx
		&= \int_V |\Grad w|^2 \dx
		\le \int_W \zeta^2 |\Grad w|^2 \dx
        \notag\\
		&\le C \int_W f^2 + |\Grad u|^2 \dx + C
	\end{align*}
    for all $k\in\oton$ and $0<|h|< \frac{\delta}{4}$. Therefore, every difference quotient of $\Grad u$ is bounded uniformly in $h$ as desired. Lemma~\ref{reg:LMM:DQ}\ref{reg:DQ.c} now implies $\partial_k \Grad u\in L^2(V)$ for all $k\in\oton$ and thus $D^2 u\in L^2(V)$ with
	\begin{align*}
		\norm{D^2 u}_{L^2(V)}
		\le C\big(\norm{f}_{L^2(W)} +\norm{\Grad u}_{L^2(W)} + 1 \big).
	\end{align*}
    This shows $u\in H^2(V)$ along with the estimate
	\begin{align}
	\label{reg:est:with:Gradu}
		\norm{u}_{H^2(V)}^2
		&= \norm{D^2 u}_{L^2(V)}^2 + \norm{\Grad u}_{L^2(V)}^2 + \norm{u}_{L^2(V)}^2 \nonumber\\
		&\le C\big(\norm{f}_{L^2(W)}^2 + \norm{\Grad u}_{L^2(W)}^2 + \norm{u}_{L^2(W)}^2 +1\big).
	\end{align}

    \textit{Step~2.} Our next goal is to further estimate the term $\norm{\Grad u}_{L^2(W)}^2$ appearing on the right-hand side of \eqref{reg:est:with:Gradu} in order to verify \eqref{reg:EST:u:H2V}. For this purpose, let $\zeta\in C^\infty(\R^n)$ be a new cutoff function with $0\le\zeta\le1$ such that $\zeta=1$ in $W$ and $\supp\zeta \subset U$. The weak formulation \eqref{reg:QLE:WF} is now tested with $v=\zeta^2u$ and we proceed as in Step~1. First, we compute the left-hand side of the resulting equation as
	\begin{align}
    \label{EST:a:1}
		\int_U a(\cdot,\Grad u) \cdot \Grad (\zeta^2u) \dx
		= \int_U a(\cdot,\Grad u) \cdot 2\zeta \Grad\zeta u \dx
			+ \int_U a(\cdot,\Grad u) \cdot \zeta^2\Grad u \dx.
	\end{align}
    As in \eqref{reg:lhs:1.summand}, invoking the assumptions \eqref{reg:ass:a:Lip} and \eqref{reg:ass:a:qLip}, we estimate the modulus of the first summand on the right-hand side of \eqref{EST:a:1} by
	\begin{align*}
		\biggabs{ \int_U a(\cdot,\Grad u) \cdot 2\zeta \Grad\zeta u \dx }
		&\le \int_U \Big(C_L |\Grad u| + C_Q\big(|\Grad u|+1) \Big)\, 2\zeta \, |\Grad\zeta| \, |u| \dx\\
		&\le \frac{C_M}{4} \int_U \zeta^2 |\Grad u|^2 \dx
			+ C \int_U u^2 \dx + C.
	\end{align*}
    Furthermore, the map $\bigabs{a(\cdot,\zero)}$ is bounded on $U$ by a constant depending only on $U$ and $a$, since \eqref{reg:ass:a:qLip} and the boundedness of $U$ imply for all $\x\in U$ that
	\begin{align*}
		\bigabs{a(\x,\zero)}
		\le \bigabs{a(\x,\zero) - a(\zero,\zero)} + \bigabs{a(\zero,\zero)}
		\le C_Q|\x| + \bigabs{a(\zero,\zero)}
        \le C.
	\end{align*}
    Proceeding similarly to \eqref{reg:lhs:2.summand}, this inequality and the assumption \eqref{reg:ass:a:mon} allow us to control the second summand on the right-hand side of \eqref{EST:a:1} by
	\begin{align*}
		&\int_U a(\cdot,\Grad u) \cdot \zeta^2\Grad u \dx \\
		&\quad\ge \int_U \big(a(\cdot,\Grad u) - a(\cdot,\zero) \big) \cdot \zeta^2\Grad u \dx
			- \int_U \bigabs{a(\cdot,\zero)} \, \zeta^2\, |\Grad u| \dx \\
		&\quad\ge \int_U C_M \zeta^2|\Grad u|^2 \dx
			- \frac{C_M}{4} \int_U \zeta^2|\Grad u|^2 \dx
			- C \\
		&\quad= \frac{3C_M}{4} \int_U \zeta^2|\Grad u|^2 \dx - C.
	\end{align*}
    We further use the weak formulation \eqref{reg:QLE:WF} tested with $v=\zeta^2u$ to bound the left-hand side of \eqref{EST:a:1} by 
	\begin{align*}
        \biggabs{ \int_U a(\cdot,\Grad u) \cdot \Grad (\zeta^2u) \dx }
		= \biggabs{ \int_U f \zeta^2u \dx }
		\le C \int_U f^2 + u^2 \dx.
	\end{align*}
    Combining the above estimates for the terms in \eqref{EST:a:1}, we eventually obtain 
	\begin{align*}
		\int_W |\Grad u|^2 \dx
		\le \int_U \zeta^2 |\Grad u|^2 \dx
		\le C \int_U f^2 + u^2 \dx + C.
	\end{align*}
    Together with \eqref{reg:est:with:Gradu}, the estimate
	\begin{align*}
		\norm{u}_{H^2(V)}
		&\le C\big(\norm{f}_{L^2(W)} + \norm{\nabla u}_{L^2(W)} + \norm{u}_{L^2(W)} +1 \big) \\
		&\leq C\big(\norm{f}_{L^2(U)} +\norm{u}_{L^2(U)} + 1\big)
	\end{align*}
    is established. Since the set $V\ssubset U$ was arbitrary, this proves that $u\in H^2_\mathrm{loc}(U)$, and for every open subset $V\ssubset U$ the estimate \eqref{reg:EST:u:H2V} holds as claimed.
\end{proof}

\subsection{Regularity up to the boundary}

\begin{theorem} \label{regbdr:THRM:reg:bdr}
    In addition to the assumptions \eqref{reg:ass:dom}--\eqref{reg:ass:f}, we assume that $U$ is of class $C^{1,1}$. Let $u\in H^1(U)$ be a weak solution of \eqref{reg:QLE} in the sense of Definition~\ref{reg:DEF:WS}. Then, it holds
    \begin{align*}
        u\in H^2(U),
    \end{align*}
    and there exists a constant $C\ge0$, which depends only on $U$ and $a$, such that the estimate 
    \begin{align}
        \label{regbdr:EST:u:H2U}
        \norm{u}_{H^2(U)}
        \le C \big( \norm{f}_{L^2(U)} + \norm{u}_{L^2(U)} + 1 \big)
    \end{align}
    holds. 
\end{theorem}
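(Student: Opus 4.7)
The plan is to extend the interior regularity result of Theorem~\ref{reg:THRM:reg:int} up to the boundary by straightening $\partial U$ locally, performing tangential difference quotient estimates, and then recovering the purely normal second derivative from the equation itself.

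First, I would cover $\ov U$ by a finite collection of open balls: interior balls $V_\ell \ssubset U$, on which Theorem~\ref{reg:THRM:reg:int} already yields the desired $H^2$-estimate, together with balls $B_\ell$ centered at points of $\partial U$. The $C^{1,1}$-regularity of $\partial U$ provides, on each $B_\ell$, a $C^{1,1}$-diffeomorphism $\Phi_\ell$ mapping $B_\ell \cap U$ onto a half-ball $B_+ \subset \{\y \in \R^n : y_n > 0\}$ and $B_\ell \cap \partial U$ onto the flat part $B_0 := B_+ \cap \{y_n = 0\}$. Setting $\tilde u := u \circ \Phi_\ell^{-1}$ and changing variables in \eqref{reg:QLE:WF} produces a quasilinear Neumann problem of the same structural form on $B_+$ with modified coefficients $\tilde a$. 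Since $\Phi_\ell$ and its inverse are $C^{1,1}$, the transformed $\tilde a$ still satisfies \eqref{reg:ass:a:Lip}--\eqref{reg:ass:a:qLip} (with possibly different constants), and the boundary condition becomes $\tilde a(\cdot, \Grad \tilde u) \cdot \e_n = 0$ on $B_0$.

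The second step is a tangential difference quotient argument in the half-ball. For every tangential direction $k \in \otonmo$ and every sufficiently small $h$, the shift $\y \mapsto \y + h\e_k$ preserves $B_+$, so the test function $v := -\dkmh(\zeta^2 \dkh \tilde u)$, with $\zeta$ a suitable cutoff supported in a smaller half-ball, lies in $H^1(B_+)$ and is admissible in the weak formulation. Mimicking the estimates of Theorem~\ref{reg:THRM:reg:int}---using \eqref{reg:ass:a:Lip}--\eqref{reg:ass:a:qLip}, Lemma~\ref{reg:LMM:DQ}, and Young's inequality for absorption---I obtain a uniform $L^2$-bound on $\dkh \Grad \tilde u$ for every $k \in \otonmo$. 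Lemma~\ref{reg:LMM:DQ}\ref{reg:DQ.c} then yields $\partial_k \partial_j \tilde u \in L^2$ for all $k \in \otonmo$ and $j \in \oton$, i.e., all second derivatives with at least one tangential index.

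The remaining, and hardest, step is to extract $\partial_n^2 \tilde u$ from the equation
\begin{equation*}
\partial_n \tilde a_n(\cdot, \Grad \tilde u) = -\tilde f - \sum_{k=1}^{n-1} \partial_k \tilde a_k(\cdot, \Grad \tilde u),
\end{equation*}
whose right-hand side is controlled in $L^2$ by the tangential regularity established above combined with \eqref{reg:ass:a:Lip} and \eqref{reg:ass:a:qLip}. Since $\tilde a$ is only Lipschitz (and not $C^1$) in $\p$, one cannot simply invert $\partial_{p_n}\tilde a_n$, so I would instead apply the strong monotonicity \eqref{reg:ass:a:mon} at the pair of vectors that agrees with $\Grad \tilde u(\y + h\e_n)$ in the first $n-1$ components and differs only in the $n$-th by $h \dnh \partial_n \tilde u$. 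This produces $C_M \abs{\dnh \partial_n \tilde u}^2$ on the left-hand side, while the right-hand side is bounded by $\abs{\dnh \tilde a_n(\cdot, \Grad \tilde u)} \abs{\dnh \partial_n \tilde u}$ plus lower-order terms involving the already-controlled tangential quantities $\dnh \partial_k \tilde u$ ($k < n$) and the quasi-Lipschitz error $C_Q(\abs{\Grad \tilde u} + 1)$. Young's inequality together with Lemma~\ref{reg:LMM:DQ}\ref{reg:DQ.c} then gives $\partial_n^2 \tilde u \in L^2$ locally. Transforming back via $\Phi_\ell$ and summing over a partition of unity subordinate to the finite cover yields $u \in H^2(U)$ together with the estimate \eqref{regbdr:EST:u:H2U}. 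The main difficulty is precisely this last step: the Lipschitz-but-not-$C^1$ dependence of $\tilde a$ on $\p$ forces the normal-normal derivative to be recovered through a monotonicity argument rather than a direct algebraic inversion, and care is needed to ensure that all mixed tangential/normal remainder terms are absorbed by the tangential bounds from the previous step.
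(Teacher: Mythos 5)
Your proposal is correct and follows essentially the same route as the paper: boundary straightening by a $C^{1,1}$-diffeomorphism, tangential difference quotients in the half-ball, and recovery of $\partial_n^2 u$ by first controlling $\partial_n\bigl(a_n(\cdot,\Grad u)\bigr)$ in $L^2$ through the divergence structure of the equation and then applying the strong monotonicity \eqref{reg:ass:a:mon} to vectors differing only in the $n$-th component. The paper implements the intermediate step via a duality/Riesz-representation argument for the functionals $v\mapsto\int a_n(\cdot,\Grad u)\,\partial_k v$, but this is the same idea you describe.
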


To prove Theorem~\ref{regbdr:THRM:reg:bdr}, we use a modified version of Lemma~\ref{reg:LMM:DQ} for difference quotients near the boundary which reads as follows.

\begin{lemma} \label{regbdr:LMM:DQ}
    Let $U\subset\R^n$ be an open half-ball in the open upper half-space
    $\R^n_+ := \{ \x=(x_1,\dots,x_n)\in\R^n \,\vert\, x_n>0\}$ with $n\in\N$, i.e., $U := B_r(\zero)\cap\R^n_+$ for a radius $r>0$, and let $V := B_s(\zero)\cap\R^n_+$ be an open half-ball with a smaller radius $0<s<r$. Under these assumptions, it holds:
    \begin{enumerate}[label=\textnormal{(\alph*)}]
		\item \label{regbdr:DQ.a}
		If $w_1,w_2\in L^2(U)$ are functions with $\supp w_i\subset V$ for $i=1$ or $i=2$, for all $0<\abs{h}< \dist(V,\partial U)$ and all $k\in\otonmo$, the identity
		\begin{align*}
			\int_U w_1\Grad_k^{-h}w_2 \dx
			= -\int_U \Grad_k^{h}w_1 w_2 \dx
		\end{align*}
        holds.
		\item\label{regbdr:DQ.b}
		Suppose $w\in H^1(U)$ and $k\in\otonmo$. Then, for each half-ball $U':= B_{r'}(\zero)\cap\R^n_+$ with a radius $0<r'<r$, the difference quotient $\dkh w$ belongs to $L^2(U')$ for all $0<\abs{h}< \dist(U',\partial U)$, and it satisfies the estimate
		\begin{align*}
			\bignorm{\dkh w}_{L^2(U')}
			\leq \norm{\del_k w}_{L^2(U)}.
		\end{align*}
		\item \label{regbdr:DQ.c}
		Let $w\colon U\to\R^n$ be locally integrable with $w\in L^2(V)$ and let $k\in{\otonmo}$. Assume further that there exists a constant $C\ge 0$ such that $\dkh w \in L^2(V)$ and $\norm{\dkh w}_{L^2(V)} \le C$ for all $0<\abs h< \dist(V,\partial U)$. Then, the weak partial derivative $\del_k w$ exists and it holds
		\begin{align*}
			\del_k w\in L^2(V) \text{ with }
			\norm{\del_k w}_{L^2(V)} \le C.
		\end{align*}
	\end{enumerate}
    Moreover, if now $U\subset\R^n$ with $n\in\N$ is a bounded open subset of $\R^n$, the following holds.
    \begin{enumerate}[label=\textnormal{(\alph*)}]
        \setcounter{enumi}{3}
		\item \label{regbdr:DQ.d}
        Let $w\in L^2(U)$ and $k\in\oton$, and assume there exists a constant $C\ge 0$ such that $\dkh w \in L^2(U')$ and $\norm{\dkh w}_{L^2(U')} \le C$ for all open subsets $U'\ssubset U$ and all $0<\abs h< \dist(U',\partial U)$. Then, the weak partial derivative $\del_k w$ exists and it holds
		\begin{align*}
			\del_k w\in L^2(U) \text{ with }
			\norm{\del_k w}_{L^2(U)} \le C.
		\end{align*}
    \end{enumerate}
\end{lemma}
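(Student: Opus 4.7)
The plan is to prove parts (a)--(c) as direct analogues of the corresponding statements in Lemma~\ref{reg:LMM:DQ}, exploiting the fact that the index is restricted to $k\in\otonmo$. This restriction is precisely what makes the proofs go through, because for such tangential indices the shift $\x\mapsto \x+h\e_k$ maps the upper half-space $\R^n_+$ to itself, so for $|h|<\dist(V,\partial U)$ the translate of any point in $V$ still lies in $U$ (whereas a shift in the $\e_n$-direction could exit the half-space, independently of how small $|h|$ is). Hence the difference quotients $\dkh w$ and $\dkmh w$ are pointwise defined a.e.~on $V$ in the required sense, and all the translation arguments from the interior case remain valid.

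For part \ref{regbdr:DQ.a}, since one of the functions $w_i$ is supported in $V\ssubset U$, the integral is effectively over $V$, and the substitution $\y=\x+h\e_k$ in one of the two integrals produced by expanding $\dkmh$ gives the identity. The tangentiality of $\e_k$ ensures that the translated domain is still contained in $U$ so that no boundary terms appear. For part~\ref{regbdr:DQ.b}, I would argue exactly as in the interior case: by density reduce to $w\in C^1(\ov U)$, use the fundamental theorem of calculus to write
\begin{align*}
    \dkh w(\x) = \frac{1}{h}\int_0^h \del_k w(\x+t\e_k)\dt
    \qquad\text{for all }\x\in U',
\end{align*}
apply Minkowski's integral inequality, and observe that the trajectory $\{\x+t\e_k : t\in[0,h]\}$ stays inside $U$ for $\x\in U'$ and $|h|<\dist(U',\partial U)$ precisely because $\e_k$ is tangential. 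For part~\ref{regbdr:DQ.c}, a uniform $L^2(V)$-bound on $\{\dkh w\}$ yields a weakly convergent subsequence in $L^2(V)$, and by testing against $C_c^\infty(V)$ functions and using part~\ref{regbdr:DQ.a}, the weak limit is identified with the distributional derivative $\del_k w$; the norm bound follows from weak lower semicontinuity.

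For part~\ref{regbdr:DQ.d}, which is a purely interior statement on a general bounded open set, I would exhaust $U$ by an increasing sequence of compactly contained open subsets $U_j\ssubset U_{j+1}\ssubset U$ with $\bigcup_j U_j=U$. Apply Lemma~\ref{reg:LMM:DQ}\ref{reg:DQ.c} on each $U_j$ to conclude that $\del_k w$ exists in $L^2(U_j)$ with $\norm{\del_k w}_{L^2(U_j)}\le C$. The distributional derivative is unique, so these local derivatives are restrictions of a single distribution on $U$; letting $j\to\infty$ and using monotone convergence gives $\del_k w\in L^2(U)$ with $\norm{\del_k w}_{L^2(U)}\le C$.

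The main obstacle is conceptual rather than technical: one must verify carefully that restricting to $k\le n-1$ is enough to make all translations and substitutions legitimate near the flat part of $\partial U$, and that no additional boundary contributions arise in the integration-by-parts identity of part~\ref{regbdr:DQ.a}. Once this is observed, the proofs reduce to the interior arguments already used in Lemma~\ref{reg:LMM:DQ}, and I would simply refer the reader to standard references such as \cite[Lemma~7.23]{Gilbarg2001} and \cite[§5.8.2]{Evans2010} for the details of the computations, noting the tangential restriction throughout.
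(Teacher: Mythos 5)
Your proposal is correct and takes essentially the same route as the paper, which simply delegates the details to a change of variables for (a), to the remark after \cite[\S 5.8.2 Theorem~3]{Evans2010} for (b) and (c), and to \cite[Lemma~7.24]{Gilbarg2001} for (d); the key observation in all cases is exactly the one you isolate, namely that for $k\le n-1$ the shift $\x\mapsto\x+h\e_k$ is tangential and therefore keeps translates of $V$ (resp.\ $U'$) inside the half-ball $U$. Your exhaustion argument for (d) is the standard proof of the cited Gilbarg--Trudinger lemma, so nothing genuinely different is being done.
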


\begin{proof}
    The statement \ref{regbdr:DQ.a} can be established by a change of variables. According to a remark after the proof of \cite[§5.8.2 Theorem~3]{Evans2010}, the results \ref{regbdr:DQ.b} and \ref{regbdr:DQ.c} remain true in the given setting. Moreover, the result \ref{regbdr:DQ.d} is shown in \cite[Lemma~7.24]{Gilbarg2001}.
\end{proof}

\begin{proof}[Proof of Theorem~\ref{regbdr:THRM:reg:bdr}.]
    The proof is split into three steps.

    \textit{Step~1.}
    We first study the case of $U$ being a half-ball in the open upper half-space, i.e.,
	\begin{equation*}
		U=B_\varrho(\zero)\cap\R^n_+
	\end{equation*}
    with a radius $\varrho>0$. In this step, the letter $C$ will denote generic non-negative constants depending only on $a$ and $\varrho$. Furthermore, we fix the half-balls $V := B_{\varrho/4}(\zero)\cap\R^n_+$, $W' := B_{\varrho/2}(\zero)\cap\R^n_+$ and $W := B_{3\varrho/4}(\zero)\cap\R^n_+$. Next, we choose a cutoff function $\zeta\in C^\infty(\R^n)$ with $0\le\zeta\le1$ such that $\zeta=1$ in $\overline{B_{\varrho/2}(\zero)}$ and $\zeta=0$ in \revised{$\R^n\setminus \overline{B_{3\varrho/4}(\zero)}$}. Thus, $\zeta$ vanishes near the curved part of $\partial U$. Proceeding as in the proof of Theorem~\ref{reg:THRM:reg:int}, we test the weak formulation \eqref{reg:QLE:WF} with a second-order difference quotient $v=-\dkmh (\zeta^2 \dkh u)$, where $k\in\otonmo$ is arbitrary and $0<|h|<\frac{\varrho}{4}$. Analogously to that proof, invoking Lemma~\ref{regbdr:LMM:DQ}\ref{regbdr:DQ.a} and \ref{regbdr:DQ.b}, we obtain
	\begin{align}
	\label{regbdr:DQ:Gradu:k<n}
		\bignorm{\dkh\nabla u}_{L^2(V)}
		\le C\big( \norm{f}_{L^2(W)} + \norm{\nabla u}_{L^2(W)} +1\big).
	\end{align}
    Lemma~\ref{regbdr:LMM:DQ}\ref{regbdr:DQ.c} then implies $\partial_k (\nabla u) \in L^2(V)$ for all $k\in\otonmo$ along with the estimate
	\begin{align}
	\label{regbdr:PD:2nd:ord}
		\sum_{\substack{k,l=1 \\ k+l<2n}}^n \norm{\partial_k \partial_l u}_{L^2(V)}
		\le C\big( \norm{f}_{L^2(W)} + \norm{\nabla u}_{L^2(W)} +1\big).
	\end{align}
    In particular, we have $\partial_k u\in H^1(V)$ for all $k\in\otonmo$. Lemma~\ref{reg:LMM:DQ}\ref{reg:DQ.b} thus yields $\dkh \partial_k u\in L^2(V')$ for every open subset $V'\ssubset V$, and for all $0<|h|<\dist(V',\partial V)$, it holds
	\begin{align}
	\label{regbdr:dnh(del_ku)}
		\sum_{k=1}^{n-1} \bignorm{\dkh \partial_k u}_{L^2(V')}
		\le \sum_{k=1}^{n-1} \norm{\partial_n \partial_k u}_{L^2(V)}
		\le C\big( \norm{f}_{L^2(W)} + \norm{\nabla u}_{L^2(W)} +1\big).
	\end{align}

    Our next goal is to also control $\norm{\partial_n \partial_n u}_{L^2(V)}$. For this purpose, let $v\in C_c^\infty(V)$ be an arbitrary function, and fix $k\in \otonmo$ and $l\in\oton$. For the component $a_l$ of $a$, employing Lemma~\ref{regbdr:LMM:DQ}\ref{regbdr:DQ.a} as well as the assumptions \eqref{reg:ass:a:Lip} and \eqref{reg:ass:a:qLip} (which are also true for the component $a_l$), we deduce from the estimate \eqref{regbdr:DQ:Gradu:k<n} that
	\begin{align*}
		&\biggabs{ \int_U a_l\big(\x,\Grad u(\x)\big) \dkmh v \dx } 
		= \biggabs{ \int_V \dkh \big(a_l\big(\x,\Grad u(\x)\big) \big) v \dx } \nonumber\\
		&\le \int_V \bigg( 
            \frac{1}{|h|} \Bigabs{ a_l\big(\x+h\e_k, \Grad u(\x+h\e_k)\big) - a_l\big(\x+h\e_k, \Grad u(\x)\big) } \nonumber\\
		&\quad + \frac{1}{|h|} \Bigabs{ a_l\big(\x+h\e_k, \Grad u(\x)\big) - a_l\big(\x, \Grad u(\x)\big) }
		  \bigg) |v| \dx \nonumber\\
		&\le \int_V \bigg( \frac{1}{|h|} C_L \bigabs{ \Grad u(\x+h\e_k) - \Grad u(\x) }   + \frac{1}{|h|} C_Q \big(|\Grad u(\x)|+1\big) |h\e_k|
		  \bigg) |v| \dx \nonumber\\
		&= \int_V \Big(C_L \bigabs{\dkh \Grad u} + C_Q\big(|\Grad u|+1\big) \Big) |v| \dx \nonumber\\[1ex]
		&\le C\big( \bignorm{\dkh \Grad u}_{L^2(V)} + \norm{\Grad u}_{L^2(V)} +1\big)  \norm{v}_{L^2(V)}\\[1ex]
		&\le C\big( \norm{f}_{L^2(W)} + \norm{\Grad u}_{L^2(W)} +1\big) \norm{v}_{L^2(V)}.
	\end{align*}
    By means of the mean value theorem, the term $|\dkmh v|^2$ with $0<|h|< \dist(V,\partial U)$ can be bounded from above by an integrable majorant that is independent of $h$. Together with the pointwise convergence $\dkmh v \to \partial_k v$, Lebesgue's general convergence theorem (see, e.g., \cite[Section~3.25]{Alt}) leads to $\dkmh v \to \partial_k v$ in $L^2(V)$ as $h\to0$. Thus, by passing to the limit in the above inequality, we conclude
	\begin{align}
    \label{EST:BREG:1}
		\biggabs{ \int_V a_l(\cdot,\Grad u) \partial_k v \dx }
		\le C\big( \norm{f}_{L^2(W)} + \norm{\Grad u}_{L^2(W)} +1\big) \norm{v}_{L^2(V)}.
	\end{align}
    Using this estimate as well as the weak formulation \eqref{reg:QLE:WF}, we infer
	\begin{align}
    \label{EST:BREG:2}
		\biggabs{ \int_V a_n(\cdot,\Grad u) \partial_n v \dx }
		&= \biggabs{ \int_V \sum_{k=1}^{n-1} a_k(\cdot,\Grad u) \partial_k v - fv\dx }\nonumber\\[1ex]
		&\le C\big( \norm{f}_{L^2(W)} + \norm{\Grad u}_{L^2(W)} +1 \big) \norm{v}_{L^2(V)}.
	\end{align}

    We now fix an arbitrary $k\in\oton$, and we define the linear functional
	\begin{align*}
        T_k: C_c^\infty(V) \to\R,\quad
		T_k(v)
		:= \int_{V} a_n(\cdot,\Grad u) \partial_k v \dx.
	\end{align*}
    By the estimates \eqref{EST:BREG:1} and \eqref{EST:BREG:2}, we see that for all functions $v\in C_c^\infty(V)$, it holds
	\begin{align}
		\label{regbdr:T_k(v)}
		|T_k(v)|
		\le C\big( \norm{f}_{L^2(W)} + \norm{\Grad u}_{L^2(W)} +1\big) \norm{v}_{L^2(V)}.
	\end{align}
    In the following, since we intend to obtain this estimate also for all functions in $L^2(V)$, let $v\in L^2(V)$ be arbitrary. We proceed as in \cite[Theorem~4.15(3)]{Alt} to approximate $v$ as follows. For any function $\varphi\in C_c^\infty(\R^n)$ with $\supp \varphi \subset B_1(0)$, $\varphi\ge0$ and $\int_{\R^n} \varphi \dx =1$, we construct a standard Dirac sequence $(\varphi_i)_{i\in\N}$ by defining $\varphi_i(\x) := i^n \varphi(i\x)$ for all $\x\in\R^n$. For every $i\in\N$, we further introduce the set 
    $$V^{(i)} := \big\{\x\in V \,\vert\, \dist(\x,\partial V) > \tfrac{1}{i}\big\} \cap B_i(\mathbf{0}).$$ 
    It clearly holds $v_i:= \big(\mathds{1}_{V^{(i)}} v\big) \ast \varphi_i \in C_c^\infty(V)$ for all $i\in\N$ and
    according to \cite[Theorem~4.15]{Alt}, we also have
	\begin{align}
	\label{regbdr:approx:v}
		v_i \to v \text{ in } L^2(V) \quad\text{as}\quad i\to \infty.
	\end{align}
    Due to \eqref{regbdr:T_k(v)} and \eqref{regbdr:approx:v}, $(T_k(v_i))_{i\in\N}$ is a Cauchy sequence as it holds
	\begin{align*}
		&\bigabs{T_k(v_i) - T_k(v_j)}
		= \bigabs{T_k(v_i - v_j)}\\
		&\quad \le C\big( \norm{f}_{L^2(W)} + \norm{\Grad u}_{L^2(W)} +1\big) \norm{v_i - v_j}_{L^2(V)}
		\to 0
	\end{align*}
	as $i,j\to\infty$. Consequently, there exists a scalar $T_{k,v}\in\R$ such that $T_k(v_i)\to T_{k,v}$ as $i\to\infty$. This allows us to define the extended functional $\tilde{T}_k$ on $L^2(V)$ by
	\begin{align*}
		\tilde{T}_k(v) 
		:=
		\begin{cases}
			T_k(v) &\text{if } v\in C_c^\infty(V),\\
			T_{k,v} &\text{otherwise}.
		\end{cases}
	\end{align*}
    This functional is linear and continuous on $L^2(V)$ because for all $v\in L^2(V)$, we infer from the estimate \eqref{regbdr:T_k(v)} and the approximation of $v$ from \eqref{regbdr:approx:v} that 
	\begin{align*}
		|T_{k,v}|
		= \lim_{i\to\infty} |T_k(v_i)| 
		&\le C\big( \norm{f}_{L^2(W)} + \norm{\Grad u}_{L^2(W)} +1\big) \lim_{i\to\infty} \norm{v_i}_{L^2(V)} \\
		&= C\big( \norm{f}_{L^2(W)} + \norm{\Grad u}_{L^2(W)} +1\big) \norm{v}_{L^2(V)}.
	\end{align*}
    The Riesz representation theorem thus ensures the existence of a function $w\in L^2(V)$ such that for all $v\in L^2(V)$, the identity
	\begin{align}
		\label{regbdr:Riesz}
		\tilde{T}_k(v)
		= (w,v)_{L^2(V)}
	\end{align}
    is satisfied. 
    In particular, we deduce the estimate
	\begin{align*}
		\norm{w}_{L^2(V)}^2
		= \bigabs{\tilde{T}_k(w)}
		\le C\big( \norm{f}_{L^2(W)} + \norm{\Grad u}_{L^2(W)} +1\big) \norm{w}_{L^2(V)},
	\end{align*}
	which directly yields
	\begin{align}
		\label{regbdr:est:w}
		\norm{w}_{L^2(V)}
		\le C\big( \norm{f}_{L^2(W)} + \norm{\Grad u}_{L^2(W)} +1\big).
	\end{align}
	Recalling the definition of the functional $T_k$ as well as \eqref{regbdr:Riesz}, we infer the identity
	\begin{align*}
		\int_{V} a_n(\cdot,\Grad u) \partial_k v \dx
		= T_k(v)
		= \tilde{T}_k(v)
		= \int_{V} vw \dx
	\end{align*}
	for all $v\in C_c^\infty(V)$. This proves that $-w$ is the distributional derivative of $a_n(\cdot,\Grad u)$ with respect to $\x_k$ meaning that $\partial_k (a_n(\cdot,\Grad u))=-w \in L^2(V)$ for all $k\in\oton$. We have thus shown $a_n(\cdot,\Grad u)\in H^1(V)$. In combination with Lemma~\ref{reg:LMM:DQ}\ref{reg:DQ.b} and the estimate \eqref{regbdr:est:w}, we conclude that on every open subset $V'\ssubset V$, the $n$-th difference quotient of $a_n(\cdot,\Grad u)$ is uniformly bounded by
	\begin{align}
		\label{regbdr:est:dnh(a_n)}
		&\bignorm{ \dnh \big(a_n(\cdot,\Grad u)\big) }_{L^2(V')}
		\le \bignorm{ \partial_n \big(a_n(\cdot,\nabla u)\big) }_{L^2(V)} \nonumber\\
		&\quad = \norm{w}_{L^2(V)} 
		\le C\big( \norm{f}_{L^2(W)} + \norm{\nabla u}_{L^2(W)} +1\big)
	\end{align}
    for all $0<|h|<\dist(V',\partial V)$.

    In order to eventually control the desired difference quotient $\dnh \partial_n u$, we first use the strong monotonicity of $a$ with respect to $\p$ to derive the estimate
	\begin{align*}
		&C_M \bigabs{\partial_n u(\x+h\e_n) - \partial_n u(\x) }^2\\[1ex]
		&\le \bigg( a \Big(
			\x+h\e_n,
			\big(\partial_1 u(\x), \dots, \partial_{n-1} u(\x), \partial_n u(\x+h\e_n) \big)^T
		  \Big)
		  \\
		&\qquad - a \Big( \x+h\e_n, \Grad u(\x) \Big)
		\bigg) \cdot \big( 0,\dots,0, \partial_n u(\x+h\e_n) - \partial_n u(\x) \big)^T \\[1ex]
		&= \bigg( a_n \Big(
			\x+h\e_n,
			\big(\partial_1 u(\x), \dots, \partial_{n-1} u(\x), \partial_n u(\x+h\e_n) \big)^T
		  \Big)
		   \\
		&\qquad - a_n \Big( \x+h\e_n, \Grad u(\x) \Big)
		\bigg) \big( \partial_n u(\x+h\e_n) - \partial_n u(\x) \big) \\[1ex]
		&\le \bigg| a_n \Big(
			\x+h\e_n,
			\big(\partial_1 u(\x), \dots, \partial_{n-1} u(\x), \partial_n u(\x+h\e_n) \big)^T
		  \Big)
		  \\
		&\qquad - a_n \Big( \x+h\e_n, \Grad u(\x) \Big)
		\bigg| \; \bigabs{ \partial_n u(\x+h\e_n) - \partial_n u(\x) }
	\end{align*}
	for all $\x\in V$. Using the Lipschitz continuity of $a_n$ with respect to $\p$ and the quasi Lipschitz continuity of $a_n$ with respect to $\x$, we infer
	\begin{align*}
		&C_M\bigabs{\dnh \partial_n u(\x)}
        \\[1ex]
		&= \frac{C_M}{|h|} \bigabs{\partial_n u(\x+h\e_n) - \partial_n u(\x) } \\
		&\le \frac{1}{|h|}
            \bigg| a_n \Big(
				\x+h\e_n,
				\big(\partial_1 u(\x), \dots, \partial_{n-1} u(\x), \partial_n u(\x+h\e_n) \big)^T
			\Big) 
        \\
        &\qquad\qquad 
            - a_n \Big( \x+h\e_n, \Grad u(\x) \Big)
		\bigg| 
        \\[1ex]
		&\le \frac{1}{|h|}
			\bigg| a_n \Big(
				\x+h\e_n,
				\big(\partial_1 u(\x), \dots, \partial_{n-1} u(\x), \partial_n u(\x+h\e_n) \big)^T
			\Big) 
        \\
        &\qquad\qquad
		    - a_n \Big( \x+h\e_n, \Grad u(\x+h\e_n) \Big)
		\bigg| \\
		&\quad+  \frac{1}{|h|} 
			\Bigabs{ a_n \big( \x+h\e_n, \Grad u(\x+ h\e_n) \big)
			- a_n \big( \x+h\e_n, \Grad u(\x) \big)
		} 
        \\[1ex]
		&\le C_L \sum_{k=1}^{n-1} \frac{1}{|h|} \bigabs{\partial_k u(\x+h\e_n) 
			- \partial_k u(\x) } \\
		&\quad+ \frac{1}{|h|} \Bigabs{a_n \big(\x +h\e_n, \Grad u(\x+h\e_n) \big) 
			- a_n \big(\x, \Grad u(\x) \big) } \\
		&\quad+ \frac{1}{|h|} \Bigabs{a_n \big(\x +h\e_n, \Grad u(\x) \big) 
			- a_n \big(\x, \Grad u(\x) \big) } 
        \\[1ex]
		&= C_L \sum_{k=1}^{n-1} \bigabs{\dnh \partial_k u(\x) } + \bigabs{\dnh \big(a_n\big(\x,\Grad u(\x)\big) \big)} + C_Q\big(|\Grad u(\x)|+1\big)
	\end{align*} 
	for all $\x\in V$. After taking the $L^2(V')$ norm on both sides for any open subset $V'\ssubset V$, we use the estimates \eqref{regbdr:dnh(del_ku)} and \eqref{regbdr:est:dnh(a_n)} to gain control of the difference quotient by
	\begin{align*}
		&\bignorm{\dnh \partial_n u}_{L^2(V')} \\
		&\quad\le C \Bigg (\sum_{k=1}^{n-1} \bignorm{\dnh \partial_k u}_{L^2(V')}  + \bignorm{\dnh \big(a_n(\cdot,\Grad u)\big)}_{L^2(V')} + \norm{\Grad u}_{L^2(V')}+1 \bigg)\\
		&\quad\le C\big( \norm{f}_{L^2(W)} + \norm{\Grad u}_{L^2(W)} +1\big)
	\end{align*}
    for every $V'\ssubset V$ and all $0<|h|<\dist(V',\partial V)$. Eventually, according to Lemma~\ref{regbdr:LMM:DQ}\ref{regbdr:DQ.d}, we obtain $\partial_n \partial_n u \in L^2(V)$ as well as the estimate
	\begin{align*}
		\norm{\partial_n \partial_n u}_{L^2(V)}
		\le C\big( \norm{f}_{L^2(W)} + \norm{\Grad u}_{L^2(W)} +1\big).
	\end{align*}
    Together with the estimate \eqref{regbdr:PD:2nd:ord} for the remaining second-order partial derivatives, we finally conclude
	\begin{align*}
		\norm{u}_{H^2(V)}^2
		&= \sum_{k,l=1}^n \norm{\partial_k \partial_l u}_{L^2(V)}^2 + \norm{\Grad u}_{L^2(V)}^2 + \norm{u}_{L^2(V)}^2\\
		&\le C\big( \norm{f}_{L^2(W)}^2 + \norm{\Grad u}_{L^2(W)} ^2 + \norm{ u}_{L^2(W)}^2  +1\big).
	\end{align*}
    As in Step~2 of the proof of Theorem~\ref{reg:THRM:reg:int}, we further estimate the term $\norm{\Grad u}_{L^2(W)}$. Since no results for the $n$-th difference quotient were used for proving this estimate in the interior, we can proceed analogously. On the half-ball $V$, we hence have $u\in H^2(V)$ with
	\begin{align*}
		\norm{u}_{H^2(V)}
		\le C\big(\norm{f}_{L^2(U)} +\norm{u}_{L^2(U)} + 1\big),
	\end{align*}
    where the constant $C\ge0$ only depends on $a$ and $\varrho$.

    \textit{Step~2.} 
    Instead of a half-ball, we now consider a general bounded open subset $U\subset\R^n$ of class $C^{1,1}$ as given in the assumptions. Near an arbitrary point $\x^0\in\partial U$ we straighten the boundary by means of a $C^{1,1}$-diffeomorphism
	\begin{align*}
		\tau: U\cap B_r \big(\x^0\big) \to \tau\Big(U\cap B_r\big(\x^0\big)\Big) \subset \R^n_+
	\end{align*}
    with a radius $r>0$. The corresponding point in the image is called $\y^0:=\tau(\x^0)$. For every $\x\in \partial U\cap B_r\big(\x^0\big)$, it follows for the $n$-th component of $\y:=\tau(\x)$ that $\y_n=0$. Without loss of generality, we choose $\tau$ and $r$ such that the Jacobian determinant fulfills $\det D\tau(\x) =1$ for all $\x\in U\cap B_r\big(\x^0\big)$, where $\tau$ and $r$ depend exclusively on $U$ and $\x^0$. Next, we select a radius $s>0$ so small that the half-ball $\hat{U}:= B_s\big(\y^0\big) \cap \R^n_+$ is contained in the image $\tau\big(U\cap B_r\big(\x^0\big)\big)$, and we set $\hat{V}:= B_{s/4}\big(\y^0\big) \cap \R^n_+$. Consequently, $s$ depends only on $U$ and $\x^0$.

    For all $\y\in \hat{U}$ and $\p\in\R^n$, we denominate the transformed functions by
	\begin{align*}
		\hat{u}(\y) &:= u\big(\tau^{-1}(\y)\big), \\
		\hat{f}(\y) &:= f\big(\tau^{-1}(\y)\big), \\
		\hat{a}(\y,\p) &:= D\tau\big(\tau^{-1}(\y)\big) \, a\Big(\tau^{-1}(\y), (D\tau)^T\big(\tau^{-1}(\y)\big) \p \Big).
	\end{align*}
    The form of $\hat{a}$ is deduced from the change of variable formula as follows: Let $\hat{v}\in H^1(\hat{U})$ be arbitrary. Then, $v(\x) := \hat{v}\big(\tau(\x)\big)$ is an admissible test function in $H^1\big(\tau^{-1}(\hat{U})\big)$ and due to $\det D\tau\big(\tau^{-1}(\y)\big)=1$, the identity
	\begin{align*}
		&\int_{\tau^{-1}(\hat{U})} a\big(\x, \Grad u(\x) \big) \cdot \Grad v(\x) \dx \\
		&= \int_{\tau^{-1}(\hat{U})} a\big(\x, \Grad (\hat{u}\circ\tau)(\x) \big) \cdot \Grad (\hat{v}\circ\tau)(\x) \dx \\
		&= \int_{\tau^{-1}(\hat{U})} a\Big(\x, (D\tau)^T(\x) \Grad \hat{u}\big(\tau(\x)\big) \Big) \cdot (D\tau)^T(\x) \Grad \hat{v} \big(\tau(\x)\big) \dx \\
		&= \int_{\hat{U}} a\Big(\tau^{-1}(\y), (D\tau)^T\big(\tau^{-1}(\y)\big) \Grad \hat{u}(\y) \Big) \cdot (D\tau)^T\big(\tau^{-1}(\y)\big) \Grad \hat{v}(\y) \dy \\
        &= \int_{\hat{U}} D\tau\big(\tau^{-1}(\y)\big) \, a\Big(\tau^{-1}(\y), (D\tau)^T\big(\tau^{-1}(\y)\big) \Grad \hat{u}(\y) \Big) \cdot \Grad \hat{v}(\y) \dy
	\end{align*}
    holds. This means that for all $\hat{v}\in H^1(\hat{U})$, the equation
	\begin{align*}
		\int_{\hat{U}} \hat{a}(\cdot,\Grad \hat{u}) \cdot \Grad \hat{v} \dy
		= \int_{\hat{U}} \hat{f}\hat{v} \dy
	\end{align*}
    is fulfilled. A straightforward computation invoking the properties of the $C^{1,1}$-diffeomorphism $\tau$ shows that $\hat{a}$ satisfies the assumptions \eqref{reg:ass:a:Lip}--\eqref{reg:ass:a:qLip} as well. For the thereby occurring constants, we have a dependency on $U$, $a$ and $\x$ since $\tau$ depends only on $U$ and $\x$.  

    Thus, $\hat{u}$ is a weak solution of the boundary value problem
    \begin{alignat*}{2}
        -\Grad \cdot \hat{a}(\cdot,\Grad \hat{u}) &= \hat{f}
        &&\quad\text{in $\hat{U}$},\\
        \hat{a}(\cdot,\Grad \hat{u}) \cdot \n &= 0
        &&\quad\text{on $\partial \hat{U}$},
    \end{alignat*}
    where the assumptions \eqref{reg:ass:dom}--\eqref{reg:ass:f} are satisfied by $\hat{U}$, $\hat{a}$ and $\hat{f}$ and $\hat{U}$ is a half-ball in the open upper half-space. Therefore, Step~1 implies the regularity $\hat{u}\in H^2(\hat{V})$ along with the estimate
	\begin{align}
		\label{regbdr:est:hat(u):H^2(hat(V))}
		\norm{\hat{u}}_{H^2(\hat{V})}
		\le \hat{C}\big(\norm{\hat{f}}_{L^2(\hat{U})} +\norm{\hat{u}}_{L^2(\hat{U})} + 1\big)
	\end{align}
    with a constant $\hat{C}\ge0$ depending only on $\hat{a}$ and $s$. As $\hat{a}$ depends only on $a$ and $\tau$ and thus only on $U$, $a$ and $\x^0$, and since $s$ depends only on $U$ and $\x^0$, we conclude that $\hat{C}$ actually depends only on $U$, $a$ and $\x^0$. 

    We now retransform the set $\hat{V}$ and consider its preimage
	\begin{align*}
		V:=\tau^{-1}(\hat{V}),
	\end{align*}
    which depends only on $U$ and $\x^0$ and contains the point $\x^0$. The $C^{1,1}$-diffeo\-morphism $\tau$ passes the property of $\hat{V}$ having a Lipschitz boundary on to $V$. Since in particular, $\tau\in C^{1,1}(V,\hat{V})$ is a $C^1$-diffeomorphism, the chain rule for Sobolev functions (see, e.g. \cite[Theorem~4.26]{Alt}) yields on the one hand $\Grad\hat{u}\circ\tau \in H^1(V;\R^n)$, and on the other hand, for the composition $u=\hat{u}\circ\tau$, the identity
	\begin{align*}
		\Grad u
		= \Grad(\hat{u}\circ\tau)
		= (D\tau)^T \Grad\hat{u}\circ\tau.
    \end{align*}
    According to \cite[Theorem~10.5]{Alt}, $(D\tau)^T$ belongs to $W^{1,\infty}(V;\R^n)$. Eventually, we conclude by the product rule (see, e.g. \cite[Theorem~4.25]{Alt}) that $\Grad u\in H^1(V;\R^n)$ which shows $u\in H^2(V)$. From the chain rule and $\tau\in W^{2,\infty}(V;\R^n)$ we moreover deduce the existence of a constant $C_\tau\ge0$ depending only on $\tau$ and thus on $U$ und $\x^0$ such that $\norm{u}_{H^2(V)} \leq C_\tau \norm{\hat{u}}_{H^2(\hat{V})}$ holds. The transformation formula and the properties of $\tau$ further imply the estimate $$\norm{\hat{f}}_{L^2(\hat{U})} = \norm{f}_{L^2(\tau^{-1}(\hat{U}))} \leq \norm{f}_{L^2(U)}.$$ 
    An analogous estimate holds for $\hat{u}$. In summary, \eqref{regbdr:est:hat(u):H^2(hat(V))} leads to the bound
	\begin{align}
		\label{regbdr:est:u:H^2(hat(V))}
		\norm{u}_{H^2(V)}
		&\le C_\tau \norm{\hat{u}}_{H^2(\hat{V})} \nonumber\\
		&\leq  C_\tau \hat{C} \big(\norm{\hat{f}}_{L^2(\hat{U})} +\norm{\hat{u}}_{L^2(\hat{U})} + 1\big) \nonumber\\
		&\leq C_{\x^0}\big(\norm{f}_{L^2(U)} +\norm{u}_{L^2(U)} + 1\big),
	\end{align}
    where the constant $C_{\x^0}\ge 0$ depends only on $U$, $a$ and $\x^0$. 

    \textit{Step~3.}
    According to Step~2, for every point $\x\in \partial U$, there is an open set $V_{\x}$ containing $\x$ such that we have the regularity $u\in H^2(V_{\x})$ along with the estimate \eqref{regbdr:est:u:H^2(hat(V))} written for $\x$. Consequently, the boundary of $U$ has the cover $\partial U \subset \bigcup_{\x\in \partial U} V_{\x}$. Since $\partial U$ is compact, we are able to choose finitely many points $\x^1,\dots,\x^m\in\partial U$ such that the respective sets $V_{\x^1},\dots,V_{\x^m}$ cover $\partial U$. Moreover, there exists an open subset $V_\mathrm{Int}\ssubset U$ such that the entire set $U$ is covered by
	\begin{align*}
		U\subset V_\mathrm{Int} \cup \bigcup_{j=0}^m V_{\x^j}.
	\end{align*}
    We point out that the points $\x^1,\dots,\x^m$ as well as the set $V_\mathrm{Int}$ depend only on $U$.

    We now merge the previous results as follows. On the one hand, we use the estimate \eqref{reg:EST:u:H2V} corresponding to the regularity in the interior from Theorem~\ref{reg:THRM:reg:int} for the set $V_\mathrm{Int}$. Here, the associated constant $C_{V_\mathrm{Int}}\ge0$ depends on $U$, $V_\mathrm{Int}$ and $a$, and thus only on $U$ and $a$. On the other hand, we apply the estimate \eqref{regbdr:est:u:H^2(hat(V))} resulting from Step~2 to the sets $V_{\x^1},\dots,V_{\x^m}$. Here, the respective constants $C_{\x^1},\dots,C_{\x^m}\geq0$ depend on $U$, $a$ and $\x^1,\dots,\x^m$. In summary, we obtain the bound
	\begin{align*}
		\norm{u}_{H^2(U)}
		&\le \norm{u}_{H^2(V_{In})} + \sum_{j=1}^m \norm{u}_{H^2(V_{\x^j})}\\
		&\le \bigg(C_{V_{In}} + \sum_{j=1}^m C_{\x^j} \bigg) \big(\norm{f}_{L^2(U)} +\norm{u}_{L^2(U)} + 1\big)\\
		&\le C \big(\norm{f}_{L^2(U)} +\norm{u}_{L^2(U)} + 1\big)
	\end{align*}
    with a constant $C\ge0$ depending only on $U$ and $a$. This proves the regularity $u\in H^2(U)$ along with the estimate \eqref{regbdr:EST:u:H2U} which completes the proof.
\end{proof}


\section{Regularity theory and separation properties for weak solutions of the anisotropic Cahn--Hilliard equation}
\label{SECT:REG:AICH}

In this final section, we present the proofs of Theorem~\ref{THM:REG} and Theorem~\ref{THM:REG:2}.

\begin{proof}[Proof of Theorem~\ref{THM:REG}]
    Let $\varphi$ and $\mu$ be arbitrary representatives of their equivalence class. From the weak formulation \eqref{DEF:WS:WF2} or \eqref{DEF:WS:WF2*}, respectively, it follows that
    \begin{align*}
        \intO A'\big(\Grad\varphi(t)\big) \cdot \Grad\eta \dx
        = \intO \big[ \mu(t) - F'\big(\varphi(t)\big) ] \,\eta \dx
    \end{align*}
    holds for all $\eta\in H^1(\Omega)$ and almost all $t\in [0,T]$. Recalling the regularity of a weak solution, we deduce that there exists a null set $\mathcal N\subset [0,T]$, such that for all $t\in [0,T]\setminus \mathcal N$, it holds
    \begin{align*}
        \mu(t) - F'\big(\varphi(t)\big) \in L^2(\Omega)
    \end{align*}
    and $\varphi(t)$ is a weak solution of the quasilinear elliptic equation
    \begin{alignat*}{2}
        - \Grad\cdot A'\big(\Grad\varphi(t)\big)
        &=  \mu(t) - F'\big(\varphi(t)\big) 
        &&\quad\text{in $\Omega$},
        \\
        A'\big(\Grad\varphi(t)\big) \cdot \n &= 0
        &&\quad\text{on $\Gamma$}.
    \end{alignat*}
    As the domain $\Omega$ and the function $A'$ have the required properties, we apply Theorem~\ref{regbdr:THRM:reg:bdr} with $a=A'$ to conclude that $\varphi(t) \in H^2(\Omega)$ with
    \begin{align}
    \label{EST:H2:PTW}
        \norm{\varphi(t)}_{H^2(\Omega)}
        \le C \big( \norm{\mu(t)}_{L^2(\Omega)} 
        + \norm{F'\big(\varphi(t)\big)}_{L^2(\Omega)} 
        + \norm{\varphi(t)}_{L^2(\Omega)} +1 \big)
    \end{align}
    for all $t\in [0,T]\setminus \mathcal N$. Integrating this inequality with respect to time from $0$ to $T$, we eventually obtain
    \begin{align*}
        \norm{\varphi}_{L^2(0,T;H^2(\Omega))}
        &\le C \big( \norm{\mu}_{L^2(0,T;L^2(\Omega))} 
        + \norm{F'(\varphi)}_{L^2(0,T;L^2(\Omega))}
        \notag\\
        &\qquad\quad 
        +\norm{\varphi}_{L^2(0,T;L^2(\Omega))} +1 \big).
    \end{align*}
    Due to the regularity of weak solutions, the right-hand side of this estimate is finite and thus, $\varphi\in L^2(0,T;H^2(\Omega))$. 
    
    We further recall that $A':\R^d\to\R^d$ is Lipschitz continuous and it holds $A'(\zero) = \zero$ since $A'$ is positively one-homogeneous.
    Since $\Grad\varphi(t) \in H^1(\Omega)$ for all $t\in [0,T]\setminus \mathcal N$, we use a differentiability result for the composition of a Lipschitz function with a Sobolev function (see \cite[Corollary~3.2]{Ambrosio-DalMaso}) to conclude $A'(\Grad\varphi(t)) \in H^1(\Omega)$ for all $t\in [0,T]\setminus \mathcal N$. This proves \eqref{REG:PHIA}.
    
    Moreover, after testing the weak formulation \eqref{DEF:WS:WF2} or \eqref{DEF:WS:WF2*} with an arbitrary test function $\eta \in C^\infty_c(\Omega)$ and integrating by parts, we obtain
    \begin{align*}
            \intO \mu\, \eta \dx 
            = \intO - \Grad\cdot A'(\Grad\varphi)\, \eta + F'(\varphi)\,\eta \dx
    \end{align*}
    a.e.~in $[0,T]$. By the fundamental lemma of the calculus of variations, we eventually conclude \eqref{EQ:MU:PTW}.  
    Thus, the proof is complete.
\end{proof}

\begin{proof}[Proof of Theorem~\ref{THM:REG:2}]
    As the mobility function $M$ is assumed to be constant, we simply set $M\equiv 1$ without loss of generality. In the following, the letter $C$ will denote generic positive constants depending only on $\varphi_0$, $\mu_0$, $F$ and the quantities introduced in \eqref{ass:dom}--\eqref{ass:M} which may change their value from step to step.

    \textit{Step~1.} We first show the regularities $\varphi\in H^1\big(0,T;H^1(\Omega)\big) \cap L^2(0,T;H^2(\Omega))$ and $\Grad\mu\in L^\infty(0,T;L^2(\Omega))$.

    For any function $f:(-\infty,T] \to X$ (where $X$ is a Banach space), $t\in (-\infty,T]$ and $h>0$, we write
    \begin{equation*}
        \delth f(t) := \frac{1}{h}\big[ f(t) - f(t-h) \big] \in X
    \end{equation*}
    to denote the backward difference quotient of $f$ at time $t$.
    We now consider $\varphi$ and $\mu$ as arbitrary but fixed representatives of their respective equivalence class which can be evaluated at every time in $[0,T]$.
    For negative times, we extend $\varphi$ and $\mu$ by defining
    \begin{align*}
        \varphi(t):=\varphi_0,
        \quad
        \mu(t):=\mu_0,
        \quad
        \text{for all $t\le 0$}.
    \end{align*}
    Let now $0<h<T$ and $t\in [0,T]$ be arbitrary. We obtain
    \begin{align}
    \label{EST:REG:1}
        &\frac 12 \intO \left| \frac 1h \int_{t-h}^t \Grad \mu(\tau) \dtau \right|^2 \mathrm{d}\mathbf{x}
        \;-\; \frac 12 \intO \left| \Grad \mu_0 \right|^2 \dx
        \notag\\[1ex]
        &\quad = 
        \int_0^t \frac 12 \;\dds \intO \left| \frac 1h \int_{s-h}^s \Grad \mu(\tau) \dtau \right|^2 \dx \ds
        \notag\\[1ex]
        &\quad = 
        \int_0^t \intO \left(\frac 1h \int_{s-h}^s \Grad \mu(\tau) \dtau\right) 
        \cdot \dds \left(\frac 1h \int_{s-h}^s \Grad \mu(\tau) \dtau\right) \dx \ds 
        \notag\\[1ex]
        &\quad = 
        \int_0^t \intO \left(\frac 1h \int_{s-h}^s \Grad \mu(\tau) \dtau\right) 
        \cdot \delth \Grad\mu(s) \dx \ds 
        \notag\\[1ex]
        &\quad = 
        \int_0^t \mathds{1}_{[0,h]}(s) \; 
        \intO \left(\frac 1h \int_{s-h}^s \Grad \mu(\tau) \dtau\right) 
        \cdot \delth \Grad\mu(s) \dx \ds 
        \notag\\
        &\qquad + 
        \int_0^t \mathds{1}_{(h,T]}(s) \;
        \intO \left(\frac 1h \int_{s-h}^s \Grad \mu(\tau) \dtau\right) 
        \cdot \delth \Grad\mu(s) \dx \ds
        \notag\\[1ex]
        &\quad =: 
        I_1 + I_2.
    \end{align}
    Using the weak formulation \eqref{DEF:WS:WF1} or \eqref{DEF:WS:WF1*}, we deduce
    \begin{align}
    \label{EST:I1}
        I_1
        &=
        \int_0^t \mathds{1}_{[0,h]}(s) \! 
        \intO \left(
        \frac 1h \int_{0}^s \Grad \mu(\tau) \dtau
        + \frac 1h \int_{s-h}^0 \Grad \mu(\tau) \dtau
        \right) 
        \cdot \delth \Grad\mu(s) \dx \ds
        \notag\\[1ex]
        &=
        - \int_0^t \mathds{1}_{[0,h]}(s) \; 
         \left< \frac 1h \int_0^s\delt\varphi(\tau) \dtau , \delth\mu(s) \right> \ds
        \notag\\
        &\qquad 
        + \int_0^t \mathds{1}_{[0,h]}(s) \; 
        \intO \left(\frac 1h \int_{s-h}^0 \Grad \mu_0 \dtau\right) 
        \cdot \delth \Grad\mu(s) \dx \ds
        \notag\\[1ex]
        &=
        - \int_0^t \mathds{1}_{[0,h]}(s) \; \langle \delth\varphi(s), \delth\mu(s) \rangle \ds
        \notag\\
        &\qquad 
        + \int_0^t \mathds{1}_{[0,h]}(s) \; 
        \frac {h-s}h  \intO \Grad \mu_0 \cdot \delth \Grad\mu(s) \dx \ds.
    \end{align} 
    A similar computation reveals
    \begin{align}
    \label{EST:I2}
        I_2 
        = - \int_0^t \mathds{1}_{(h,T]}(s) \; \langle \delth\varphi(s), \delth\mu(s) \rangle \ds.
    \end{align}
    In summary, we thus have
    \begin{align}
    \label{EST:REG:2}
        &\frac 12 \intO \left| \frac 1h \int_{t-h}^t \Grad \mu(\tau) \dtau \right|^2 \dx
        \notag\\[1ex]
        &\quad =
        \frac 12 \norm{\Grad\mu_0}_{L^2(\Omega)}^2
        - \int_0^t \langle \delth\varphi(s), \delth\mu(s) \rangle \ds
        \notag\\
        &\qquad 
        + \int_0^t \mathds{1}_{[0,h]}(s) \; 
        \frac {h-s}h  \intO \Grad \mu_0 \cdot \delth \Grad\mu(s) \dx \ds.
    \end{align}
    Invoking the weak formulation \eqref{DEF:WS:WF2} or \eqref{DEF:WS:WF2*} for $\mu(s)$ with $s\in(0,T)$ as well as condition~\eqref{ASS:MU} for $\mu(s)=\mu_0$ with $s\in (-T,0]$, we deduce that
    \begin{align}
    \label{EQ:REG:MU}
        \intO \mu(s) \eta(s) \dx = \intO A'\big(\Grad\varphi(s)\big) \cdot \Grad\eta(s) + F'\big(\varphi(s)\big) \eta(s) \dx
    \end{align} 
    for all $\eta\in L^2(-T,T;H^1(\Omega))$ and almost all $s\in[-T,T]$.
    Recalling the definition of the backward difference quotient, we use \eqref{EQ:REG:MU} to rewrite the second term on the right-hand of \eqref{EST:REG:2} side as
    \begin{align}
    \label{EQ:DUAL}
    &- \int_0^t \langle \delth\varphi(s), \delth\mu(s) \rangle \ds 
    = - \int_0^t \intO \delth\mu(s) \delth\varphi(s) \dx\ds 
    \notag\\[1ex]    
    &\quad= 
    - \int_0^t \intO \delth A'\big(\Grad\varphi(s)\big) \cdot \delth\Grad\varphi(s) 
    + \delth\big[ F'\big(\varphi(t)\big) \big] \delth \varphi(s) \dx\ds.
    \end{align}
    Due to \eqref{ass:F3} in the case of a regular potential or \eqref{DEC:LOG} in the case of the logarithmic potential, $F$ can be decomposed as the sum of functions $F_1$ and $F_2$, where $F_1'$ is monotonically increasing and $F_2'$ is Lipschitz continuous. Recalling that $A':\R^d\to\R^d$ is strongly monotone (see \eqref{ass:A}), we deduce the estimate 
    \begin{align}
    \label{EST:DUAL}
    &- \int_0^t \langle \delth\varphi(s), \delth\mu(s) \rangle \ds 
    \notag\\[1ex]
    &\quad\le  
    - a_0 \int_0^t \norm{\delth\Grad\varphi(s)}_{L^2(\Omega)}^2 \ds
    + L \int_0^t \norm{\delth\varphi(s)}_{L^2(\Omega)}^2 \ds,
    \end{align}
    where $L$ denotes the minimal Lipschitz constant for $F_2'$. Using the weak formulation as well as Young's inequality, we obtain 
    \begin{align}
    \label{EST:SUM2}
    & \int_0^t \norm{\delth\varphi(s)}_{L^2(\Omega)}^2 \ds
    = \int_0^t \left< \frac 1h \int_{\max\{0,s-h\}}^s \delt \varphi(\tau) \dtau , \delth\varphi(s) \right> \ds
    \notag\\[1ex]
    &\quad=
    \int_0^t \intO \left( \frac 1h \int_{s-h}^s \Grad\mu(\tau) \dtau - \frac 1h \int_{s-h}^{\max\{0,s-h\}} \Grad\mu(\tau) \dtau \right) \cdot \delth\Grad\varphi(s) \dx 
    \ds
    \notag\\[1ex]
    &\quad\le
    \int_0^t \intO \left( 
        \left| \frac 1h \int_{s-h}^s \Grad\mu(\tau) \dtau \right|
        + \big| \Grad\mu_0 \big|
        \right) 
    \big| \delth\Grad\varphi(s) \big| \dx \ds
    \notag\\[1ex]
    &\quad\le
    C \left[ \int_0^t \intO \left| \frac 1h \int_{s-h}^s \Grad\mu(\tau) \dtau \right|^2 \dx \ds
    + \int_0^t \norm{\Grad\mu_0}_{L^2(\Omega)}^2 \ds \right]
    \notag\\
    &\qquad+ \frac{a_0}{2L} \int_0^t \norm{\delth\Grad\varphi(s)}_{L^2(\Omega)}^2 \ds.
    \end{align}
    In view of \eqref{EST:DUAL}, we thus infer
    \begin{align}
    \label{EST:DUAL:2}
    - \int_0^t \langle \delth\varphi(s), \delth\mu(s) \rangle \ds 
    &\le- \frac{a_0}{2} \int_0^t \norm{\delth\Grad\varphi(s)}_{L^2(\Omega)}^2 \ds
    + CLT \norm{\Grad\mu_0}_{L^2(\Omega)}^2
    \notag\\[1ex]
    &\qquad
    + CL \int_0^t \intO \left| \frac 1h \int_{s-h}^s \Grad\mu(\tau) \dtau \right|^2 \dx \ds.
    \end{align}
    Now, combining \eqref{EST:REG:2} and \eqref{EST:DUAL:2}, we conclude that for all $t\in[0,T]$,
    \begin{align}
    \label{EST:REG:3}
        &\frac 12 \intO \left| \frac 1h \int_{t-h}^t \Grad \mu(\tau) \dtau \right|^2 \dx
        + \frac{a_0}{2} \int_0^t \norm{\delth\Grad\varphi(s)}_{L^2(\Omega)}^2 \ds
        \notag\\[1ex]
        &\quad \le 
        C \norm{\Grad\mu_0}_{L^2(\Omega)}^2
        + 2CL \int_0^t \frac 12 \intO \left| \frac 1h \int_{s-h}^s \Grad\mu(\tau) \dtau \right|^2 \dx \ds
        \notag\\
        &\qquad 
        + \int_0^t \mathds{1}_{[0,h]}(s) \; 
        \frac {h-s}h  \intO \Grad \mu_0 \cdot \delth \Grad\mu(s) \dx \ds.
    \end{align}

    We now choose an arbitrary time $t\in [0,h]$.
    Recalling that $\mu(s)=\mu_0$ if $s\le 0$, the third summand on the right-hand side of \eqref{EST:REG:3} can be bounded by
    \begin{align}
    \label{EST:REG:4}
        &\int_0^t \mathds{1}_{[0,h]}(s) \; 
        \frac {h-s}h  \intO \Grad \mu_0 \cdot \delth \Grad\mu(s) \dx \ds
        \notag\\[1ex]
        &\quad = 
        \intO \Grad\mu_0 \cdot \int_0^t \frac {h-s}{h^2} \mathds{1}_{[0,h]}(s) 
            \big[ \Grad\mu(s) - \Grad\mu_0 \big] \ds \dx 
        \notag\\[1ex]
        &\quad \le 
        \intO \Grad\mu_0 \cdot \int_{t-h}^t \frac {h-s}{h^2} \mathds{1}_{[0,h]}(s) 
            \Grad\mu(s) \ds \dx 
        \notag\\[1ex]
        &\quad \le 
        C \norm{\Grad\mu_0}_{L^2(\Omega)}^2
        + \frac 14 \intO \left| \frac 1h \int_{t-h}^t \Grad \mu(\tau) \dtau \right|^2 \dx.
    \end{align}
    Combining this estimate with \eqref{EST:REG:3}, we infer that
    \begin{align}
    \label{EST:REG:3*}
        &\frac 14 \intO \left| \frac 1h \int_{t-h}^t \Grad \mu(\tau) \dtau \right|^2 \dx
        \notag\\[1ex]
        &\quad \le 
        C \norm{\Grad\mu_0}_{L^2(\Omega)}^2
        + 4CL \int_0^t \frac 14 \intO \left| \frac 1h \int_{s-h}^s \Grad\mu(\tau) \dtau \right|^2 \dx \ds
    \end{align}
    holds for all $t\in [0,h]$. Hence, Gronwall's lemma implies
    \begin{align}
    \label{EST:REG:5}
        \frac 14 \intO \left| \frac 1h \int_{t-h}^t \Grad \mu(\tau) \dtau \right|^2 \dx
        \le C \norm{\Grad\mu_0}_{L^2(\Omega)}^2 \; \mathrm e^{4CLT} \le C
    \end{align}
    for all $t\in [0,h]$.   
    Consequently, due to \eqref{EST:REG:4}, we have
    \begin{align}
    \label{EST:REG:6}
        \int_0^t \mathds{1}_{[0,h]}(s) \; 
        \frac {h-s}h  \intO \Grad \mu_0 \cdot \delth \Grad\mu(s) \dx \ds \le C
    \end{align}
    for all $t\in [0,T]$. Using this estimate to bound the third summand on the right-hand side of \eqref{EST:REG:3}, we obtain for all $t\in [0,T]$,
    \begin{align}
    \label{EST:REG:7}
        &\frac 12 \intO \left| \frac 1h \int_{t-h}^t \Grad \mu(\tau) \dtau \right|^2 \dx
        + \frac{a_0}{2} \int_0^t \norm{\delth\Grad\varphi(s)}_{L^2(\Omega)}^2 \ds
        \notag\\[1ex]
        &\quad \le 
        C + 2CL \int_0^t \frac 12 \intO \left| \frac 1h \int_{s-h}^s \Grad\mu(\tau) \dtau \right|^2 \dx \ds.
    \end{align}
    Eventually, applying Gronwall's lemma, we conclude that for all $t\in [0,T]$,
    \begin{align}
    \label{EST:REG:8}
        \frac 12 \intO \left| \frac 1h \int_{t-h}^t \Grad \mu(\tau) \dtau \right|^2 \dx
        + \frac{a_0}{2} \int_0^t \norm{\delth\Grad\varphi(s)}_{L^2(\Omega)}^2 \ds
        \le C.
    \end{align}
    Using this information to bound the right-hand side of \eqref{EST:SUM2}, we further have
    \begin{align}
    \label{EST:REG:9}
        \int_0^t \norm{\delth\varphi(s)}_{L^2(\Omega)}^2 \ds 
        \le C
    \end{align}
    for all $t\in [0,T]$. 
    Combining \eqref{EST:REG:8} and \eqref{EST:REG:9}, we infer that $\delth\varphi$ is bounded in $L^2(0,T;H^1(\Omega))$ uniformly in $h$ and thus $\delt\varphi$ exists in the weak sense and belongs to $L^2(0,T;H^1(\Omega))$. As we already know from Theorem~\ref{THM:REG} that $\varphi\in L^2(0,T;H^2(\Omega))$, this proves
    \begin{align}
        \varphi \in H^1\big(0,T;H^1(\Omega)\big) \cap L^2(0,T;H^2(\Omega)).
    \end{align}
    Let now $(h_k)_{k\in\N} \subset (0,1)$ be an arbitrary sequence with $h_k\to 0$ as $k\to \infty$. Due to the extension of $\mu$ for negative times, we have $\mu\in L^2(-T,T;L^2(\Omega))$. 
    For any $k\in\N$, let
    \begin{equation*}
        \chi_k:\R\to\R,\quad \chi_k(s) := \frac{1}{h_k} \mathds{1}_{[-h_k,0]}(s) \revised{.}
    \end{equation*}
    This defines a Dirac sequence in the sense of \cite[Section~4.14]{Alt} and we have
    \begin{align*}
        I_{h_k}[\Grad\mu](t,\x) := \frac{1}{h_k} \int_{t-h_k}^t \Grad \mu(\tau,\x) \dtau = \big[\chi_k \ast \nabla\mu(\cdot,\x)\big](t)
    \end{align*}
    for almost all $t\in [0,T]$ and $\x\in\Omega$, where ``$\,\ast\,$'' denotes the convolution with respect to time. According to \cite[Theorem~4.15(2)]{Alt}, it thus holds
    \begin{align}
    \label{CONV:GMU:L2}
        I_{h_k}[\Grad\mu]
        = \chi_k \ast \nabla\mu \to \nabla\mu
        \quad\text{in \revised{$L^2(0,T;L^2(\Omega;\R^d))$}}
    \end{align}
    as $k\to\infty$.
    We further deduce from \eqref{EST:REG:8} that $I_{h_k}[\Grad\mu]$ is bounded in $L^\infty(0,T;L^2(\Omega;\R^d))$ uniformly in $k$. This implies that
    \begin{align*}
        I_{h_k}[\Grad\mu]
        \;\to\;
        \nabla\mu
        \quad\text{weakly-$^*$ in $L^\infty(0,T;L^2(\Omega;\R^d))$}
    \end{align*}
    as $k\to\infty$ along a non-relabeled subsequence.
    Here, \eqref{CONV:GMU:L2} was used to identify the \mbox{weak-$^*$} limit.
    In particular, we thus have
    \begin{align}
    \label{REG:GMU}
        \Grad \mu \in L^\infty\big(0,T;L^2(\Omega;\R^d)\big).
    \end{align}

    \textit{Step~2.} By means of elliptic regularity theory, we now show $\mu\in L^2(0,T;H^2(\Omega))$.

    In view of the regularities established in Step~1, there exists a Lebesgue null set $\mathcal N\subset [0,T]$ such that for all $t\in[0,T]\setminus \mathcal N$, it holds $\delt\varphi(t) \in H^1(\Omega)$, $\mu(t)\in L^2(\Omega)$ and
    \begin{align*}
        \intO \Grad\mu(t) \cdot \nabla \zeta \dx = - \intO \delt\varphi(t) \zeta \dx
    \end{align*}
    for all $\zeta\in H^1(\Omega)$. This means that $\mu(t)$ is a weak solution of the Poisson--Neumann problem
    \begin{align*}
        - \Delta \mu(t) = - \delt\varphi(t) \quad\text{in $\Omega$}, 
        \quad
        \Grad\mu(t) \cdot \n = 0 \quad\text{on $\Gamma$}.
    \end{align*}
    By means of elliptic regularity theory (see, e.g., \cite[Section~4]{McLean}) and \eqref{EST:REG:9}, we thus have $\mu(t) \in H^2(\Omega)$ for all $t\in[0,T]\setminus \mathcal N$ with
    \begin{align*}
        \int_0^T \norm{\mu(t)}_{H^2(\Omega)}^2 \dt \le C \int_0^T \norm{\delt\varphi(t)}_{L^2(\Omega)}^2 \dt \le C.
    \end{align*}
    This proves $\mu\in L^2(0,T;H^2(\Omega))$.
    \pagebreak[2]
    
    \textit{Step~3.} We next show the regularities $F'(\varphi)\in L^\infty(0,T;L^2(\Omega))$, $\mu\in L^\infty(0,T;H^1(\Omega))$ \revised{and $\varphi \in L^\infty(0,T;H^2(\Omega)) \cap C(\ov{\Omega_T})$}.

    If $F$ is a regular potential satisfying \eqref{ass:F1} with $p\le 4$ if $d=3$, we use the continuous embedding $H^1(\Omega) \emb L^{2(p-1)}(\Omega)$ to obtain
    \begin{align*}
        \intO \abs{F'\big(\varphi(t)\big)}^2 \dx 
        &\le C_{F'} \intO 1 + \abs{\varphi(t)}^{2(p-1)} \dx 
        \notag\\
        &
        \le C\big( 1 + \norm{\varphi(t)}_{L^\infty(0,T;H^1(\Omega))}^{2(p-1)} \big)
        \le C
    \end{align*}
    for almost all $t\in[0,T]$. This already implies $F'(\varphi)\in L^\infty(0,T;L^2(\Omega))$. 
    
    For the logarithmic potential, the regularity $F'(\varphi)\in L^\infty(0,T;L^2(\Omega))$ follows from the estimate \eqref{EST:F':LOG} in combination with \eqref{REG:GMU}.

    Using the weak formulation, the growth condition on $A'$ from \eqref{ass:A} as well as Young's inequality, we obtain for almost all $t\in [0,T]$, 
    \begin{align*}
        \norm{\mu(t)}_{L^2(\Omega)}^2 
        &= \intO A'\big(\Grad \varphi(t)\big) \cdot \Grad \mu(t) + F'\big(\varphi(t)\big) \mu(t) \dx
        \notag\\
        &\le C \intO \abs{\Grad\varphi(t)} \abs{\Grad \mu(t)} + \abs{F'\big(\varphi(t)\big)} \abs{\mu(t)} \dx
        \notag\\[1ex]
        &\le C \norm{\Grad\varphi}_{L^\infty(0,T;L^2(\Omega))}^2 + C \norm{\Grad\mu}_{L^\infty(0,T;L^2(\Omega))}^2
        \notag\\
        &\qquad + C \norm{F'(\varphi)}_{L^\infty(0,T;L^2(\Omega))}^2 + \frac 12 \norm{\mu(t)}_{L^2(\Omega)}^2.
    \end{align*}
    As we already know $\Grad\varphi,\Grad\mu \in L^\infty(0,T;L^2(\Omega;\R^d))$ and $F'(\varphi) \in L^\infty(0,T;L^2(\Omega))$, this proves $\mu \in L^\infty(0,T;L^2(\Omega))$ and thus, $\mu \in L^\infty(0,T;H^1(\Omega))$ directly follows.

    \revised{Recalling \eqref{EST:H2:PTW}, the regularities established above eventually imply
    \begin{align*}
        \norm{\varphi(t)}_{H^2(\Omega)}
        \le C \big( \norm{\mu(t)}_{L^2(\Omega)} 
        + \norm{F'\big(\varphi(t)\big)}_{L^2(\Omega)} 
        + \norm{\varphi(t)}_{L^2(\Omega)} +1 \big)
        \le C
    \end{align*}
    for almost all $t\in[0,T]$, which entails $\varphi \in L^\infty(0,T;H^2(\Omega))$.
    As we also have $\varphi \in H^1\big(0,T;H^1(\Omega)\big)$, we use the compact embedding $H^2(\Omega)\emb H^{7/4}(\Omega)$, the continuous embeddings $H^{7/4}(\Omega) \emb C(\ov\Omega)$ and $H^{7/4}(\Omega) \emb H^1(\Omega)$, and the Aubin--Lions--Simon lemma to deduce}
    \begin{align*}
        \revised{\varphi \in C\big([0,T];H^{7/4}(\Omega)) \emb C\big([0,T];C(\ov\Omega)\big) \cong C(\ov{\Omega_T}).}
    \end{align*}

    \textit{Step~4.} The next step is to prove $F'(\varphi) \in L^2(0,T;L^\infty(\Omega))$.

    Let us first consider the case where $F$ is the logarithmic potential. 
    Therefore, we consider $\varphi$ and $\mu$ as arbitrary representatives of their equivalence class. 
    For any $k\in\N$, we introduce the truncation
    \begin{align*}
        \sigma_k:\R\to\R, \quad
        \sigma_k(s) :=
        \begin{cases}
            1-\frac1k &\text{if $s\ge 1-\frac1k $},\\
            s &\text{if $\abs{s} < 1-\frac1k $},\\
            -1+\frac1k &\text{if $s\le -1+\frac1k $},
        \end{cases} 
    \end{align*}
    and we set $\varphi_k := \sigma_k\circ\varphi$. For any \revised{$q\ge 2$}
    and $k\in\N$, we further define
    \begin{align*}
        \revised{\eta_k := \bigabs{F_1'(\varphi_k)}^{q-2} F_1'(\varphi_k) \quad\text{if $q>2$},
        \quad\text{and}\quad
        \eta_k := F_1'(\varphi_k) \quad\text{if $q=2$}.}
    \end{align*}
    Hence, we have $\eta_k(t) \in H^1(\Omega )\cap L^\infty(\Omega)$ for almost all $t\in[0,T]$ and the gradient is given by
    \begin{align*}
        \Grad \eta_k := (q-1) \bigabs{F_1'(\varphi_k)}^{q-2} F_1''(\varphi_k) \Grad \varphi_k .
    \end{align*}
    Let now $k\in\N$ with $k\ge 2$ and \revised{$q\ge 2$} be arbitrary.
    Testing the weak formulation \eqref{DEF:WS:WF2} or \eqref{DEF:WS:WF2*}, respectively, with $\eta_k$, we obtain
    \begin{align}
        \label{WF:ETAK}
        &(q-1) \intO A'(\Grad\varphi)\cdot \Grad \varphi_k\, \bigabs{F_1'(\varphi_k)}^{q-2} F_1''(\varphi_k) \dx 
        \notag\\
        &\qquad
            + \intO F_1'(\varphi) \, F_1'(\varphi_k) \, \bigabs{F_1'(\varphi_k)}^{q-2} \dx
        \notag\\
        &\quad= 
            \intO \mu \, F_1'(\varphi_k) \, \bigabs{F_1'(\varphi_k)}^{q-2} \dx
            - \intO F_2'(\varphi) \, F_1'(\varphi_k) \, \bigabs{F_1'(\varphi_k)}^{q-2} \dx
    \end{align}
    a.e.~in $[0,T]$. By the definition of $\varphi_k$ and the monotonicity of $A'$, we have 
    \begin{align*}
        A'(\Grad\varphi)\cdot \Grad \varphi_k 
        = A'(\Grad\varphi_k)\cdot \Grad \varphi_k 
        \ge a_0 \abs{\Grad\varphi_k}^2
        \quad\text{a.e.~in $\Omega_T$}.
    \end{align*}
    As we further know $F_1''\ge 0$ (since $F_1$ is convex), we infer that the first summand on the left-hand side of \eqref{WF:ETAK} is non-negative. Since $k\ge 2$ and $F_1'$ is monotonically increasing, we further have
    \begin{align*}
        F_1'(\varphi_k)^2 \le F_1'(\varphi)\, F_1'(\varphi_k)
        \quad\text{a.e.~in $\Omega_T$},
    \end{align*}
    which can be used to estimate the second summand on the left-hand side of \eqref{WF:ETAK} from below. In view of \eqref{WF:ETAK} and the definition of $F_2$, we thus deduce 
    \begin{align}
        \label{EST:ETAK}
        &\intO \bigabs{F_1'(\varphi_k)}^{q} \dx
        \le \intO \big(\abs{\mu} + c_* \abs{\varphi} \big) \, \bigabs{F_1'(\varphi_k)}^{q-1} \dx
    \end{align}
    for some constant $c_*>0$ depending only on $F$.
    By means of Hölder's inequality, we infer \revised{
    \begin{align}
        \label{EST:ETAK:2*}
        \norm{F_1'(\varphi_k)}_{L^q(\Omega)}^{q}
        \le 
        \big( \norm{\mu(t)}_{L^q(\Omega)} + c_* \norm{\varphi(t)}_{L^q(\Omega)} \big) \,
        \norm{F_1'(\varphi_k)}_{L^q(\Omega)}^{q-1}
    \end{align} 
    a.e.~in $[0,T]$.}
    This directly implies
    \begin{align}
        \label{EST:ETAK:2}
        \norm{F_1'(\varphi_k)}_{L^q(\Omega)}^{q}
        \le 
        (1 + \abs{\Omega}) 
        \big( \norm{\mu(t)}_{L^\infty(\Omega)} + c_* \norm{\varphi(t)}_{L^\infty(\Omega)} \big) \,
        \norm{F_1'(\varphi_k)}_{L^q(\Omega)}^{q-1}
    \end{align}   
    a.e.~in $[0,T]$.
    Since $\varphi,\mu \in L^2(0,T;H^2(\Omega))$ and $H^2(\Omega) \emb L^\infty(\Omega)$, there exists a null set $\mathcal N\subset [0,T]$ such that for all $t\in [0,T]\setminus\mathcal N$,
    \begin{align}
    \label{EST:ct}
        c(t)&:=
        (1 + \abs{\Omega}) 
        \big( \norm{\mu(t)}_{L^\infty(\Omega)} + c_* \norm{\varphi(t)}_{L^\infty(\Omega)} \big)
        \notag\\
        &\phantom{:}\le 
        C \big( \norm{\mu(t)}_{H^2(\Omega)} + c_*\norm{\varphi(t)}_{H^2(\Omega)} \big) < \infty.
    \end{align}
    Using this estimate to bound the right-hand side of \eqref{EST:ETAK:2}, we conclude
    \begin{align}
    \label{EST:F1PK:P}
        \bignorm{F_1'\big(\varphi_k(t)\big)}_{L^q(\Omega)} \le c(t) < \infty
    \end{align}
    for all $q\ge 4$ and all $t\in [0,T]\setminus\mathcal N$, after possibly replacing $\mathcal N$ by a larger null set. As $c(t)$ is independent of $q$, this directly entails
    \begin{align}
    \label{EST:F1PK:INF}
        \bignorm{F_1'\big(\varphi_k(t)\big)}_{L^\infty(\Omega)} \le c(t) < \infty
    \end{align}
    for all $t\in [0,T]\setminus\mathcal N$. 


    
    As for all $t\in [0,T]\setminus\mathcal N$ and all $k\in\N$, the function $F_1'(\varphi_k(t))$ is measurable and non-negative, we apply Fatou's lemma to derive the estimate
    \begin{align}
    \label{EST:F1PK:FATOU}
        \intO \underset{k\to\infty}{\lim\inf} \; \bigabs{F_{1}'\big(\varphi_k(t)\big)}^2 \dx
        \le \underset{n\to\infty}{\lim\inf} \; \intO \bigabs{F_{1}'\big(\varphi_k(t)\big)}^2 \dx
        \le \abs{\Omega} c(t).
    \end{align}
    Moreover, it is straightforward to check that $\varphi_k(t) \to \varphi(t)$ a.e.~in $\Omega$ for all $t\in [0,T]\setminus\mathcal N$, after possibly replacing $\mathcal N$ by a larger null set. 

    Let now $t\in [0,T]\setminus\mathcal N$ be arbitrary.
    Since $\abs{\varphi_k(t)} < 1$ a.e.~in $\Omega$ for all $k\in\N$, this already implies that $\varphi(t) \in [-1,1]$ a.e.~in $\Omega$.
    However, if $\abs{\varphi(t,\x)} = 1$ for some $x\in \Omega$, we have 
    $\abs{\varphi_k(t,\x)} \to 1$ and thus $\abs{F_1'(\varphi_k(t,\x))} \to +\infty$ as $k\to\infty$. We thus conclude that the set 
    \begin{align*}
        \mathcal M_t := \big\{ x\in\Omega \,\big\vert\, \abs{\varphi(t,\x)} = 1 \big\} 
    \end{align*}
    has Lebesgue measure zero as otherwise, this would contradict \eqref{EST:F1PK:FATOU}. Consequently, we have $\varphi(t) \in (-1,1)$ a.e.~in $\Omega$ and
    \begin{align}
    \label{CONV:F1P:PTW}
        F_1'\big(\varphi_k(t)\big) \to F_1'\big(\varphi(t)\big) \quad\text{a.e.~in $\Omega$ as $k\to\infty$}.
    \end{align}

    
    In combination with \eqref{EST:F1PK:INF}, this proves
    \begin{align}
    \label{EST:F1P:INF}
        \bignorm{F_1'\big(\varphi(t)\big)}_{L^\infty(\Omega)} \le c(t) < \infty
    \end{align}
    for all $t\in [0,T]\setminus\mathcal N$. We now recall \eqref{EST:ct} as well as the regularities $\varphi,\mu \in L^2(0,T;H^2(\Omega))$. Taking the square on both sides of \eqref{EST:F1P:INF} and integrating with respect to time from $0$ to $T$, we eventually conclude $F_1'(\varphi) \in L^2(0,T;L^\infty(\Omega))$. As a direct consequence, we have $F'(\varphi) \in L^2(0,T;L^\infty(\Omega))$.
    
    Let us now consider the case where $F$ is a regular potential. After fixing representatives $\varphi$ and $\mu$ of their equivalence class, we introduce the truncation 
    \begin{align*}
        \sigma_k:\R\to\R, \quad
        \sigma_k(s) :=
        \begin{cases}
            k &\text{if $s\ge k$},\\
            s &\text{if $\abs{s} < k$},\\
            -k &\text{if $s\le -k$},
        \end{cases} 
    \end{align*}
    and we set $\varphi_k:=\sigma_k\circ \varphi$ for all $k\in\N$. Proceeding exactly as for the logarithmic potential, we show that $F_1'\big(\varphi_k(t)\big)$ satisfies \eqref{EST:F1PK:INF} for all $t\in [0,T]\setminus\mathcal N$, where $\mathcal N\subset [0,T]$ is some null set and $c(t)$ is the constant defined in \eqref{EST:ct}. As the regular potential is assumed to fulfill \eqref{ass:F3}, we know that $F_1':\R\to\R$ is continuous. Hence, since $\varphi_k(t) \to \varphi(t)$ a.e.~in $\Omega$ for all $t\in [0,T]\setminus\mathcal N$, we deduce the convergence \eqref{CONV:F1P:PTW}. This entails \eqref{EST:F1P:INF} and by proceeding as above, we conclude the claim.

    \textit{Step~5:} From now on, let $F$ be the logarithmic potential.
    We now show that the strict separation property \eqref{EST:SEPPROP} holds for almost all $t\in[0,T]$.

    As a consequence of \eqref{EST:F1P:INF}, there exist null sets $\mathcal N\subset [0,T]$ and $\mathcal O \subset \Omega$ such that for all $(t,\x) \in \Omega_T\setminus (\mathcal N\times\mathcal O)$, 
    it holds $\varphi(t,\x) \in (-1,1)$ and
    \begin{align}
    \label{EST:F1P:PTW}
        \bigabs{\ln\big( 1 + \varphi(t,\x) \big) - \ln\big( 1 - \varphi(t,\x) \big)}
        = \Big|\frac{2}{\theta} \, F_1'\big(\varphi(t,\x)\big)\Big| \le \frac{2 c(t)}{\theta} .
    \end{align}
    Let now $(t,\x) \in \Omega_T\setminus (\mathcal N\times\mathcal O)$ be arbitrary.
    
    If $\varphi(t,\x) \in [0,1)$, we have $\ln\big( 1 + \varphi(t,\x) \big) \ge 0$ and $-\ln\big( 1 - \varphi(t,\x) \big) \ge 0$. Hence,
    \begin{align*}
        - \ln\big( 1 - \varphi(t,\x) \big) 
        \le \frac{2 c(t)}{\theta} .
    \end{align*}
    By a straightforward computation, we thus infer
    \begin{align}
    \label{EST:F1P:PTW+}
        0 
        \le \varphi(t,\x)  
        \le 1- \delta(t),
        \quad\text{with}\quad
        \delta(t) := \exp\left(-\frac{2 c(t)}{\theta}\right) \in (0,1].
    \end{align}
    If $\varphi(t,\x) \in (-1,0)$, a similar computation yields $- \big(1 - \delta(t) \big) \le \varphi(t,\x) < 0$.
    
    In summary, we have thus shown that
    \begin{align*}
        \norm{\varphi(t)}_{L^\infty(\Omega)} \le 1 - \delta(t)
    \end{align*}
    for all $t\in [0,T]\setminus \mathcal N$. This verifies the strict separation property \eqref{EST:SEPPROP} for almost all $t\in[0,T]$.

    \revised{
    \textit{Step~6:} As the final step, we verify the uniform separation property \eqref{EST:SEPPROP:UNI} in the case $d=2$.

    Recalling the definition of $F_1$ in the context of the logarithmic potential (see \eqref{DEC:LOG}), we observe
    \begin{align}
        \label{EST:F''}
        \abs{F_1''(s)} \le \theta \mathrm{e}^{\frac2\theta  \abs{F'(s)}}
        \quad\text{for all $s\in\R$}.
    \end{align}   
    Since $d=2$, we know that for all $u\in H^1(\Omega)$ and $r\in [2,\infty)$, it holds
    \begin{align*}
        \norm{u}_{L^r(\Omega)} 
        &\le C_\Omega \sqrt{r}\, \norm{u}_{H^1(\Omega)} 
    \end{align*}
    for some constant $C_\Omega>0$ depending only on $\Omega$.
    This inequality can, for instance, be found in \cite[p.~479]{Trudinger}. Consequently, we infer from \eqref{EST:ETAK:2*} that
    \begin{align*}
        \bignorm{F_1'\big(\varphi_k(t)\big)}_{L^q(\Omega)} \le C\sqrt{q} \big(\norm{\mu}_{L^\infty(0,T;H^1(\Omega))} + c_* \norm{\mu}_{L^\infty(0,T;H^1(\Omega))}\big)
        \le C \sqrt{q}
    \end{align*}
    for almost all $t\in [0,T]$, all $q\ge 2$, and all $k\in\N$ with $k\ge 2$.
    Arguing similarly as in Step~5, we conclude
    \begin{align}
        \label{EST:F':Q}
        \bignorm{F_1'\big(\varphi(t)\big)}_{L^q(\Omega)} \le C\sqrt{q}
    \end{align}    
    for almost all $t\in [0,T]$ and all $q\ge 2$.
    Based on this inequality, we proceed as in the proof of \cite[Theorem~3.1]{Gal2023} to obtain the estimate
    \begin{align}
        \label{EST:F':EXP}
        \intO \abs{\mathrm{e}^{\frac2\theta \abs{F'(\varphi(t))}}}^m \dx \le C(m)
    \end{align}
    for all $m\in\N$, almost all $t\in [0,T]$ and some constant $C(m)$ depending only on $m$ and the constant $C$ from \eqref{EST:F':Q} (cf.~\cite[Eq.~(26)]{Gal2023} with the constants therein being chosen as $C_1 = \frac{2}{\theta}$, $C_2=0$ and $\beta=1$).
    In view of \eqref{EST:F''}, inequality \eqref{EST:F':EXP} written for $m=6$ directly implies
    \begin{align}
        \bignorm{F_1''(\varphi)}_{L^\infty(0,T;L^6(\Omega))} \le C.
    \end{align}
    Employing the chain rule and the continuous embedding $H^2(\Omega)\emb W^{1,6}(\Omega)$, we thus obtain
    \begin{align}
        \bignorm{\Grad F_1'(\varphi)}_{L^\infty(0,T;L^3(\Omega))} 
        &\le \bignorm{F_1''(\varphi)}_{L^\infty(0,T;L^6(\Omega))} \norm{\Grad\varphi}_{L^\infty(0,T;L^6(\Omega))}
        \notag\\
        &\le C \norm{\varphi}_{L^\infty(0,T;H^2(\Omega))}
        \le C.
    \end{align}
    Using \eqref{EST:F':Q} written for $q=3$ as well as the continuous embedding $W^{1,3}(\Omega) \emb C(\ov\Omega)$, we infer
    \begin{align}
    \label{EST:F':UNI}
        \bignorm{F_1'\big(\varphi(t)\big)}_{L^\infty(\Omega)} 
        \le \norm{F_1'(\varphi)}_{L^\infty(0,T;W^{1,3}(\Omega))}
        \le C =: C^*
    \end{align}
    for almost all $t\in [0,T]$. As we have already shown $\varphi\in C(\ov{\Omega_T})$, we know that \eqref{EST:F':UNI} actually holds true for \textit{all} $t\in[0,T]$. By arguing as in Step~5, we eventually conclude
    \begin{align*}
        \abs{\varphi(t,\x)} \le 1 - \delta^*
        \quad\text{with}\quad
        \delta^* := \exp\left(-\frac{2C^*}{\theta}\right) \in (0,1],
    \end{align*}
    for all $(t,\x) \in [0,T]$, where $C^*$ is the constant from \eqref{EST:F':UNI}. Therefore, the uniform separation property \eqref{EST:SEPPROP:UNI} is verified.
    }

    This means that all claims are established and thus, the proof is complete.   
\end{proof}



\section*{Acknowledgement}
The authors gratefully acknowledge the support by the RTG 2339 “Interfaces, Complex Structures, and Singular Limits” of the Deutsche Forschungsgemeinschaft (DFG, German Research Foundation). The authors also acknowledge continuous inspiration by the work of Pierluigi Colli who, according to MathSciNet, published 47 papers related to the Cahn--Hilliard equation. We also thank the anonymous referee for valuable comments which helped to improve the paper significantly.




\footnotesize

\bibliographystyle{abbrv}
\bibliography{GKW1}

\end{document}